\newtheorem{thm}{Theorem}[section]
\newtheorem{prop}[thm]{Proposition}
\newtheorem{cor}[thm]{Corollary}
\newtheorem{lem}[thm]{Lemma}
\theoremstyle{remark}
\newtheorem{rem}[thm]{Remark}
\newtheorem{defn}{Definition}[section]
\newcommand{\CP}{\mathbb{CP}}
\newcommand{\Q}{\mathbb{Q}}
\newcommand{\R}{\mathbb{R}}
\newcommand{\Z}{\mathbb{Z}}
\newcommand{\SO}{\mathrm{\SO}}
\title[Ordering Thurston's geometries by maps of non-zero degree]{Ordering Thurston's geometries by maps of non-zero degree}
\author{Christoforos Neofytidis}
\address{Department of Mathematical Sciences, {\smaller SUNY} Binghamton, Binghamton, NY 13902-6000, USA}
\email{chrisneo@math.binghamton.edu}
\date{\today}
\subjclass[2010]{57M05, 57M10, 57M50, 55M25, 22E25, 20F34}
\keywords{Ordering manifolds, maps of non-zero degree, Thurston geometry, $4$-manifolds, Kodaira dimension}
\begin{document}

\maketitle

\begin{abstract}
We obtain an ordering of closed aspherical $4$-manifolds that carry a non-hyperbolic Thurston geometry. As application, we derive that the Kodaira dimension of geometric $4$-manifolds is monotone with respect to the existence of maps of non-zero degree.
\end{abstract}

\section{Introduction}

The existence of a map of non-zero degree defines a transitive relation, called domination relation, on the homotopy types of closed oriented manifolds of the same dimension. Whenever there is a map of non-zero degree $M\longrightarrow N$ we say that $M$ {\em dominates} $N$ and write $M\geq N$. In general, the domain of a map of non-zero degree is a more complicated manifold than the target.

Gromov suggested studying the domination relation as defining an ordering of compact oriented manifolds of a given dimension; see~\cite[pg. 1]{CarlsonToledo}. In dimension two, this relation is a total order given by the genus. Namely, a surface of genus $g$ dominates another surface of genus $h$ if and only if $g\geq h$. However, the domination relation is not generally an order in higher dimensions, e.g. $S^3$ and $\mathbb{RP}^3$ dominate each other but are not homotopy equivalent. Nevertheless, it can be shown that the domination relation is a partial order in certain cases. For instance, $1$-domination defines a partial order on the set of closed Hopfian aspherical manifolds of a given dimension (see~\cite{Rong} for $3$-manifolds). Other special cases have been studied by several authors; see for example~\cite{CarlsonToledo,Bel,BBM,Zhang}.

Wang~\cite{Wang:3-mfdsasp} obtained an ordering of all closed aspherical $3$-manifolds in a reasonable sense, according to Thurston's geometrization picture. We will discuss Wang's result in Section \ref{s:Wang}, together with an extension of that result to non-aspherical $3$-manifolds obtained in~\cite{KotschickNeofytidis}; cf. Theorem \ref{t:order3-mfds}. In this paper, our goal is to order in the sense of Wang all non-hyperbolic closed $4$-manifolds that carry a Thurston aspherical geometry:

\begin{thm}\label{t:order4}
 Consider all closed oriented $4$-manifolds that possess a non-hyperbolic aspherical geometry. If there is an oriented path from a geometry $\mathbb{X}^4$
to another geometry $\mathbb{Y}^4$ in Figure \ref{d:nonhypmaps}, then any closed $\mathbb{Y}^4$-manifold is dominated by a closed $\mathbb{X}^4$-manifold.
If there is no oriented path from $\mathbb{X}^4$ to $\mathbb{Y}^4$, then no closed $\mathbb{X}^4$-manifold dominates a closed $\mathbb{Y}^4$-manifold.
\end{thm}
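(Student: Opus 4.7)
The plan is to treat the two directions of the theorem separately: the constructive direction by building explicit maps of non-zero degree along every oriented edge of Figure~\ref{d:nonhypmaps}, and the obstructive direction by invoking invariants that are monotone under non-zero degree maps. Throughout, asphericity lets me pass freely between a non-zero degree map $f\colon M\to N$ and the induced finite-index surjection $\pi_1(M)\twoheadrightarrow\Gamma\le\pi_1(N)$, together with the rationally surjective map $H_*(M;\Q)\to H_*(N;\Q)$.

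For the constructive direction, given an oriented edge $\mathbb{X}^4\to\mathbb{Y}^4$ and a closed $\mathbb{Y}^4$-manifold $N$, I need to produce a closed $\mathbb{X}^4$-manifold $M$ and a map $M\to N$ of non-zero degree. When both geometries are products of lower-dimensional ones, Wang's ordering of $3$-manifolds (Theorem~\ref{t:order3-mfds}) together with the classical genus ordering of surfaces supplies factor-wise maps of non-zero degree, whose product realises the desired domination (possibly after passing to a finite cover of $M$, which is harmless since a finite cover always dominates the base). For edges involving the non-product geometries $\Nil^4$, $\Sol^4_0$ and $\Sol^4_{m,n}$, I would use that each closed such manifold is, up to a finite cover, a torus bundle over $S^1$ or $T^2$, and arrange the source monodromy to cover the target monodromy, yielding a fibrewise map of non-zero degree over the base.

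For the obstructive direction I use a hierarchy of degree-monotone invariants. Gromov's simplicial volume is positive only for the geometries containing a closed hyperbolic factor of dimension at least two unpaired with an amenable factor; amongst the non-hyperbolic aspherical $4$-geometries this places $\mathbb{H}^2\times\mathbb{H}^2$ strictly above all others. Below that, the decisive invariants are algebraic: virtual solvability, virtual nilpotency, Hirsch length, the nilpotency class of the Fitting subgroup, the presence of a non-abelian free subgroup, the rank of the centre, and bounds on rational Betti numbers (accessible for solvmanifolds via Lie algebra cohomology) are all preserved by a finite-index surjection and hence descend from $\pi_1(M)$ to $\pi_1(N)$. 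Together they account for most of the missing edges.

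The main obstacle will be the non-existence arguments inside two clusters where the coarse invariants coincide. Within the solvable cluster $\{\R^4,\Nil^4,\Nil^3\times\R,\Sol^4_0,\Sol^4_{m,n},\Sol^3\times\R\}$ all fundamental groups are virtually polycyclic of Hirsch length four, so the coarse invariants do not separate them; the non-dominations here have to be extracted from finer data attached to the mapping torus description, most crucially the eigenvalues of the monodromy on the abelianised fibre and the isomorphism type of the Fitting subgroup. The second delicate cluster is $\{\mathbb{H}^2\times\R^2,\widetilde{\SL}_2\times\R,\mathbb{F}^4\}$, whose fundamental groups are virtual extensions of a surface group by a rank-two abelian group, distinguished by the Euler class of that extension and the rank of the centre. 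Verifying that each of these properties genuinely descends under a finite-index surjection, and hence really obstructs every non-zero degree map, is where the bulk of the work will lie.
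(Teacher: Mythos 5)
There is a genuine gap: your toolkit does not reach the non-existence statements that carry essentially all the weight of the theorem. First, every oriented edge of Figure \ref{d:nonhypmaps} joins product geometries; the five geometries $Nil^4$, $Sol_0^4$, $Sol^4_{m\neq n}$, $Sol_1^4$ and irreducible $\mathbb{H}^2\times\mathbb{H}^2$ are isolated nodes, so your plan to ``arrange the source monodromy to cover the target monodromy'' addresses edges that do not exist, while the real burden is to show these five are incomparable with everything else. Second, the central difficulty you pass over is that non-domination is \emph{not} obviously stable under taking products: $M\ngeq N$ for $3$-manifolds does not formally yield $M\times S^1\ngeq N\times S^1$, and none of your invariants explains, say, why $\Sigma_g\times\Sigma_h$ cannot dominate $N\times S^1$ with $N$ hyperbolic (simplicial volume obstructs maps in the opposite direction only, and the domain is neither solvable nor usefully constrained by Betti numbers or centres). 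The paper handles exactly this via Theorem \ref{t:mapsbetweenproducts} and Proposition \ref{c:productslower} (stability results resting on Thom's realization theorem and the hypothesis that $N$ is not dominated by products), combined with the covering/lifting argument of Proposition \ref{p:liftingtoproducts}; some such input is indispensable and is absent from your outline.

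Several of your descent claims are also false or unjustified as stated. The rank of the centre does not descend under a finite-index surjection: one only has $\varphi(Z(G))\subseteq Z(\varphi(G))$, and a group with centre of rank two can perfectly well surject onto $\Z^4$; the centre actually enters through factorization arguments, where a central (torsion) fibre class must die in the target, forcing the map through a lower-dimensional aspherical space and hence to degree zero (Lemma \ref{l:factorizationbundles} and the argument for $\widetilde{SL_2}\times\R\ngeq \mathbb{H}^2\times\R^2,\R^4$). Likewise the monodromy eigenvalues of the target are not visibly controlled by those of the domain under a surjection onto a finite-index subgroup; the paper instead proves that any non-zero degree map between the relevant $T^3$-mapping tori is $\pi_1$-injective (Proposition \ref{p:injectivemappingtoriTn}) and then concludes with Wall's rigidity theorem (Theorem \ref{t:Wall4Dgeometries}), which says homotopy equivalent geometric $4$-manifolds carry the same geometry. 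Wall's theorem, which you never invoke, is also the endgame for the irreducible $\mathbb{H}^2\times\mathbb{H}^2$ geometry, where the needed obstruction is Margulis' normal subgroup theorem (a $\pi_1$-surjection from an irreducible lattice has trivial kernel, so a dominant map would be a homotopy equivalence); your simplicial-volume hierarchy can neither separate the reducible from the irreducible $\mathbb{H}^2\times\mathbb{H}^2$ geometry nor show that irreducible manifolds dominate nothing else. Finally, $\mathbb{F}^4$ has no closed representatives and does not belong in the statement at all.
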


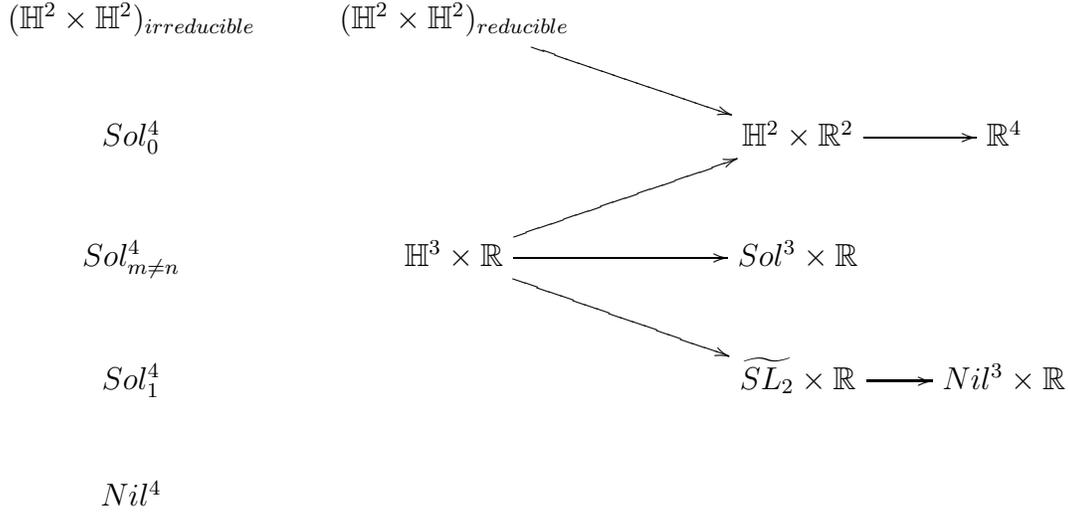
\begin{figure}[ht!]
    \[
\xymatrix{
 &(\mathbb{H}^2 \times \mathbb{H}^2)_{irreducible}  & (\mathbb{H}^2 \times \mathbb{H}^2)_{reducible} \ar[rrd]& & & \\
& Sol_0^4 &   &                                       &  \mathbb{H}^2 \times \R^2 \ar[r] & \R^4\\
&Sol_{m\neq n}^4 & 
 \mathbb{H}^3 \times \R \ar[rr] \ar[rru] \ar[rrd] & & Sol^3
\times \R & &\\
&Sol_1^4 & & & \widetilde{SL_2} \times \R \ar[r] & Nil^3 \times \R\\
 & Nil^4  &  & & &}
\] 
\caption{\small{Ordering the non-hyperbolic aspherical Thurston geometries in dimension four.}}
\label{d:nonhypmaps}
\end{figure} 

We will not be dealing with the two hyperbolic geometries (real and complex), partially because some of the results concerning those geometries are well-known and because the domination relation for those geometries has been studied by other authors; see Section \ref{ss:hyper} for a brief discussion. Similarly, the non-aspherical geometries are not included in the above theorem. Those geometries are either products or their representatives are simply connected; see~\cite{Neobranch, Neothesis} for a discussion.

An important question in topology (see for example~\cite{MT}) is whether a given numerical invariant $\iota$ is monotone with respect to the domination relation, that is, whether $M\geq N$ implies $\iota(M)\geq\iota(N)$. The Kodaira dimension is a significant invariant in the classification scheme of manifolds; we refer to~\cite{Li} for a recent survey on the various notions of Kodaira dimension of low-dimensional manifolds.
Zhang~\cite{Zhang} defined the Kodaira dimension $\kappa^t$ for $3$-manifolds and showed (\cite[Theorem 1.1]{Zhang}) that it is monotone with respect to the domination relation; 
see Theorem \ref{t:Zhang}. Moreover, Zhang~\cite{Zhang} defined the Kodaira dimension $\kappa^g$ for geometric $4$-manifolds and suggested that it should also be monotone with respect to the domination relation (we will give both definitions of $\kappa^t$ and $\kappa^g$ in Section \ref{s:Kodaira}). In this paper we confirm Zhang's suggestion:

\begin{thm}\label{t:Kodaira4}
Let $M, N$ be closed oriented geometric $4$-manifolds. If $M\geq N$, then $\kappa^g(M)\geq\kappa^g(N)$.
\end{thm}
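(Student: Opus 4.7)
The plan is a case analysis on the Thurston geometries of $M$ and $N$, using Theorem \ref{t:order4} together with Zhang's assignment of $\kappa^g$ per geometric model. First I tabulate the values: $\kappa^g = -\infty$ on the non-aspherical (spherical-factor) models; $\kappa^g = 0$ on the flat and virtually solvable geometries $\R^4$, $Nil^4$, $Nil^3 \times \R$, $Sol_0^4$, $Sol_{m \ne n}^4$, $Sol_1^4$, $Sol^3 \times \R$; $\kappa^g = 1$ on the mixed geometries $\mathbb{H}^2 \times \R^2$, $\widetilde{SL_2} \times \R$, $\mathbb{H}^3 \times \R$; and $\kappa^g = 2$ on $\mathbb{H}^4$, $\mathbb{H}^2(\mathbb{C})$, and both $\mathbb{H}^2 \times \mathbb{H}^2$ variants. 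The theorem then reduces to checking that the domination relation respects this grading across all pairs.

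If $N$ carries a non-hyperbolic aspherical geometry, Theorem \ref{t:order4} forces $M$ to carry either such a geometry connected to that of $N$ by an oriented path in Figure \ref{d:nonhypmaps}, or one of the two hyperbolic geometries. A direct inspection of Figure \ref{d:nonhypmaps} confirms that $\kappa^g$ is non-increasing along every arrow (for instance $(\mathbb{H}^2 \times \mathbb{H}^2)_{reducible} \to \mathbb{H}^2 \times \R^2$ drops from $2$ to $1$, and $\widetilde{SL_2} \times \R \to Nil^3 \times \R$ drops from $1$ to $0$), while a hyperbolic $M$ has the maximal value $\kappa^g = 2$. If instead $N$ is hyperbolic, then $\kappa^g(N) = 2$, and the argument switches to Gromov's simplicial volume: it is positive for closed hyperbolic $4$-manifolds and for irreducible $\mathbb{H}^2 \times \mathbb{H}^2$ quotients but vanishes on every other geometric $4$-manifold, each of the latter admitting a flat direction along which one can collapse. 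Since simplicial volume is monotone under maps of non-zero degree, $M$ must carry a geometry with $\kappa^g = 2$, and monotonicity follows.

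The remaining case is $M$ non-aspherical, so $\kappa^g(M) = -\infty$ and one must show $N$ is non-aspherical as well. The simply connected models ($S^4$, $\mathbb{CP}^2$, $S^2 \times S^2$) have trivial $\pi_1$, forcing the image of $f_\ast$ to generate a finite subgroup and thus excluding an aspherical $4$-dimensional $N$. The remaining non-aspherical models are products $S^k \times B$ with $k \in \{2,3\}$ and $B$ aspherical of dimension $4-k$; here any map to an aspherical $N$ factors up to homotopy through the projection $M \to B$ by standard obstruction theory, and since $\dim B < 4$ this projection induces the zero map in top homology, so no map of non-zero degree exists. I expect the main obstacle to be the simplicial volume step for hyperbolic targets: verifying uniform vanishing of $\|\cdot\|$ across the various $Sol$ and mixed models requires a geometry-by-geometry check, though in each case an $F$-structure or a self-map of unbounded degree suffices to conclude.
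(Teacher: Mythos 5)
Your overall strategy --- reduce aspherical non-hyperbolic targets to Theorem \ref{t:order4}, dispose of non-aspherical domains by factoring through the (low-dimensional) classifying space, and treat hyperbolic targets separately --- is exactly the paper's, whose proof is three sentences citing Theorem \ref{t:order4} and Section \ref{ss:hyper}. The one genuine error is in your opening table: by Definition \ref{d:Kodaira4}, the geometries $Sol_0^4$ and $Sol_1^4$ belong to the $\kappa^g=-\infty$ class, not the $\kappa^g=0$ class, even though their representatives are aspherical. As written you are proving monotonicity of a different function. The argument survives the correction, but only because of two facts you should make explicit: $Sol_0^4$ and $Sol_1^4$ are isolated vertices of Figure \ref{d:nonhypmaps}, so your ``non-increasing along every arrow'' inspection is unaffected; and the new content forced by the correct values --- that a $Sol_0^4$- or $Sol_1^4$-manifold dominates no manifold of any \emph{other} aspherical geometry, since every such target now has strictly larger $\kappa^g$ --- is precisely the non-existence half of Theorem \ref{t:order4} together with Section \ref{ss:hyper} for hyperbolic targets. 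Restate the table and add this case.

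Two smaller points. For hyperbolic targets you invoke simplicial volume where the paper uses fundamental-group obstructions (not presentable by products, Margulis); your route is legitimate, but reducible $\mathbb{H}^2\times\mathbb{H}^2$-manifolds \emph{also} have positive simplicial volume, being virtual products of hyperbolic surfaces, so the correct dichotomy is ``positive exactly on the three $\kappa^g=2$ geometries, zero on all others'' --- which is still all you need, since positivity of $\Vert M\Vert$ then forces $\kappa^g(M)=2$. The vanishing you defer to a geometry-by-geometry check does hold uniformly: amenable fundamental group for the solvable and flat geometries, a virtual $S^1$-factor for the $\mathbb{X}^3\times\R$ geometries, and an $S^2$-factor or simple connectivity for the non-aspherical ones.
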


\subsection*{Outline} 
In Section \ref{s:Wang} we discuss briefly Wang's ordering of $3$-manifolds. In Section \ref{s:4-geom} we list Thurston's aspherical geometries in dimension four together with some properties of manifolds modeled on those geometries. In Section \ref{s:products} we discuss maps between $4$-manifolds that are finitely covered by direct products, extending in particular Wang's ordering to $4$-manifolds that are virtual products of type $N\times S^1$, and in Section \ref{s:proofend} we complete the proof of Theorem \ref{t:order4}. In Section \ref{s:Kodaira} we discuss the relationship between the domination relation and Kodaira dimensions.

\subsection*{Acknowledgments} 
I am grateful to D. Kotschick under whose wise guidance and continuous support this project was carried out. Also, I would like to thank C. L\"oh and S. Wang for useful comments and discussions and W. Zhang for bringing to my attention the relationship of this work to Kodaira dimensions. 
The financial support of the {\em Deutscher Akademischer Austausch Dienst} (DAAD) is also gratefully acknowledged.

\section{Wang's ordering in dimension three}\label{s:Wang}

\subsection{Ordering 3-manifolds}

Let $M$ be a closed aspherical $3$-manifold that does not possess one of the six aspherical Thurston geometries. Then there is a finite family of splitting tori so that $M$ can be cut into pieces, called JSJ pieces (after Jaco-Shalen and Johannson). We call $M$ {\em non-trivial graph} manifold if all the JSJ pieces are Seifert. If there is a non-Seifert JSJ piece, then this piece must be hyperbolic by Perelman's proof of Thurston's geometrization conjecture. In that case, we call $M$ {\em non-graph} manifold. 

 In~\cite{Wang:3-mfdsasp} Wang suggested an ordering of all closed $3$-manifolds.  According to Wang's work and to the results of~\cite{KotschickNeofytidis} 
 we have the following:

\begin{thm}[Wang's ordering \cite{Wang:3-mfdsasp,KotschickNeofytidis}]\label{t:order3-mfds}
 Let the following classes of closed oriented $3$-manifolds: 
\begin{itemize}
 \item[\normalfont{(i)}] aspherical geometric: modeled on one of the geometries $\mathbb{H}^3$, $Sol^3$, $\widetilde{SL_2}$, $\mathbb{H}^2 \times \R$,
$Nil^3$ or $\R^3$;
 \item[\normalfont{(ii)}] aspherical non-geometric: $\mathrm{(GRAPH)}$ non-trivial graph or $\mathrm{(NGRAPH)}$ non-geometric irreducible non-graph;
 \item[\normalfont{(iii)}] finitely covered by $\#_p(S^2 \times S^1)$, for some $p\geq 0$\footnote{As explained in~\cite{KotschickNeofytidis}, these manifolds constitute the class of {\em rationally inessential} $3$-manifolds, namely closed oriented $3$-manifolds whose classifying map of the universal covering is trivial in rational homology of degree $3$.}.
\end{itemize}
If there exists an oriented path from a class $X$ to another class $Y$ in Figure \ref{f:order3-mfds}, then any representative in the class $Y$ is dominated
by some representative of the class $X$. If there is no oriented path from the class $X$ to the class $Y$, then no manifold in the class $Y$ can be
dominated by a manifold of the class $X$. 
\end{thm}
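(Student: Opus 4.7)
The plan is to split the argument into two complementary halves. On the one hand, for every oriented edge of Figure \ref{f:order3-mfds} one must exhibit a closed oriented $3$-manifold in the source class together with an explicit non-zero degree map onto some closed $3$-manifold in the target class; on the other hand, for every pair of classes not joined by an oriented path one must forbid any such map by means of an invariant that is monotone under the domination relation.

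For the constructive half I would argue as follows. Every closed orientable $3$-manifold admits a degree-one map onto a connected sum $\#_p(S^2\times S^1)$ for a suitable $p$, obtained by collapsing one handlebody of a Heegaard splitting; this takes care of all arrows into class (iii). The arrows out of $\mathrm{GRAPH}$ and $\mathrm{NGRAPH}$ into the geometric classes are realized by a pinching construction: start from a non-trivial (graph, respectively non-graph) manifold that already contains a JSJ piece of the desired geometry, pinch the complementary pieces down to their JSJ boundary tori, and cap off to obtain a degree-one map onto a closed representative of the target geometry. The remaining arrows internal to the geometric Seifert classes are handled by combining a branched covering on the base $2$-orbifold with a fiber-preserving map of total spaces; for instance, one passes from $\widetilde{SL_2}$ or $\mathbb{H}^2\times\R$ to $Nil^3$ or $\R^3$ by branched covering a hyperbolic base orbifold onto a Euclidean one and pulling back the corresponding circle bundle.

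For the obstructive half I would assemble a short list of monotone invariants. First, Gromov's simplicial volume $\|\cdot\|$, which is positive on a closed aspherical $3$-manifold exactly when its JSJ decomposition contains a hyperbolic piece and vanishes on all other classes, immediately forbids any domination from a zero-simplicial-volume class onto $\mathbb{H}^3$ or $\mathrm{NGRAPH}$. Second, the non-triviality of the classifying map of the universal cover in rational top degree separates the aspherical classes (i) and (ii) from class (iii). Third, standard algebraic invariants of $\pi_1$ — growth type, virtual first Betti number, rank of the maximal virtual $\Z^k$ subgroup, and solvability or nilpotency properties — pairwise separate the six geometric Seifert classes and in particular separate $Sol^3$ from the remaining Seifert geometries. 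Using the fact that a non-zero degree map between closed orientable aspherical manifolds of the same dimension induces, after passing to a finite cover of the source, a $\pi_1$-injection of a finite index subgroup of the target, these algebraic invariants transfer across maps and rule out the forbidden edges.

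The main obstacle is the obstructive half inside the block of Seifert geometric classes, where simplicial volume and the other coarse invariants vanish simultaneously, so that one has to work entirely at the level of $\pi_1$. Here the argument requires a careful analysis of the canonical central $\Z$ subgroup of the regular Seifert fiber and of the base $2$-orbifold group under virtually injective maps of lattices in the relevant $3$-dimensional Lie groups. A second delicate point is to rule out dominations from a Seifert geometric class onto $\mathrm{GRAPH}$: one must show that the non-trivial JSJ decomposition of the target cannot be accommodated by a source with a single Seifert JSJ piece, which follows from the fact that essential JSJ tori lift to essential tori in any finite cover and therefore cannot disappear under a $\pi_1$-injection.
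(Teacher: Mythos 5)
Your outline has several genuine gaps. Note first that the paper does not reprove this theorem from scratch: it quotes Wang and Kotschick--Neofytidis for almost all of it, and only supplies the one missing construction (a $Sol^3$-manifold dominating $S^2\times S^1$, via a $T^2$-mapping torus with hyperbolic monodromy) and the justification of the restrictions $p\leq 1$ on the arrows from $Sol^3$, $Nil^3$ and $\R^3$ into class (iii). Those restrictions are invisible in your proposal, and your constructive step for class (iii) rests on a false statement: a closed $3$-manifold of Heegaard genus $g$ does \emph{not} in general admit a degree-one map onto $\#_g(S^2\times S^1)$ (a lens space has genus $1$ but finite $H_1$; a $Sol^3$-manifold has genus $\geq 2$ but $b_1=1$, and indeed the theorem asserts it reaches only $p\leq 1$). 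The arrows from $\mathbb{H}^2\times\R$ and $\widetilde{SL_2}$ onto $\#_p(S^2\times S^1)$ for arbitrary $p$ require the explicit circle-bundle constructions of~\cite{KotschickNeofytidis}, and the non-existence for $p\geq 2$ from $Nil^3$ needs the center/factorization argument recorded in Section~\ref{s:Wang}, not Betti numbers alone. Your pinching recipe also cannot produce the arrow $\mathrm{(GRAPH)}\to Sol^3$, since a graph manifold has no $Sol^3$ JSJ piece, and it says nothing about $\mathbb{H}^3\to\mathrm{(NGRAPH)}$, i.e.\ that every non-graph manifold is dominated by some closed hyperbolic manifold.

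The more serious structural problem is in your obstructive half: a non-zero degree map between closed aspherical manifolds of the same dimension is \emph{not} virtually $\pi_1$-injective (pinching a handle of $\Sigma_2$ onto $T^2$ already shows this in dimension two); what is true is that, after passing to suitable finite covers, it is $\pi_1$-surjective onto a finite-index subgroup of the target. Hence only quotient-stable properties of $\pi_1$ (solvability, virtual first Betti number, surjections onto free or surface groups) transfer from domain to target, whereas subgroup-type invariants such as ``rank of the maximal virtual $\Z^k$'', and your argument that essential JSJ tori of the target must be seen by the source under a ``$\pi_1$-injection'', do not apply as stated. This leaves the hardest non-existence cases unproved: for instance $\mathbb{H}^2\times\R\ngeq\widetilde{SL_2}$ is not detected by growth, Betti numbers, solvability or simplicial volume, and is handled in~\cite{KotschickNeofytidis} by factorizing through the base of the Seifert fibration and exploiting the (non-)triviality of the circle fiber in homology; likewise Wang's exclusion of dominations from geometric manifolds onto non-trivial graph manifolds is a genuine theorem, not a consequence of tori lifting to covers. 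Your overall architecture (existence by explicit constructions, non-existence by monotone invariants) matches the cited sources, but the specific tools you invoke are false or insufficient at exactly the points where the real work lies.
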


\begin{figure}[]
     \[
\xymatrix{
& & & & \mathbb{H}^2 \times \R \ar[rrr] \ar[rrrd] & & & \R^3 \ar[d]^{(p\leq 1)}\\
\mathbb{H}^3 \ar[r] & \mathrm{(NGRAPH)} \ar[l] \ar[r] &
\mathrm{(GRAPH)} 
\ar[rru] \ar[rrd] \ar[rr] & &
Sol^3 \ar[rrr]^{(p\leq 1)} & & & \#_p(S^2 \times S^1)\\
& & & &\widetilde{SL_2} \ar[rrr] \ar[rrru] & & & Nil^3 \ar[u]_{(p\leq 1)}}
\]
\caption{\small{Ordering $3$-manifolds by maps of non-zero degree~\cite{Wang:3-mfdsasp,KotschickNeofytidis}.}}
\label{f:order3-mfds}
\end{figure}
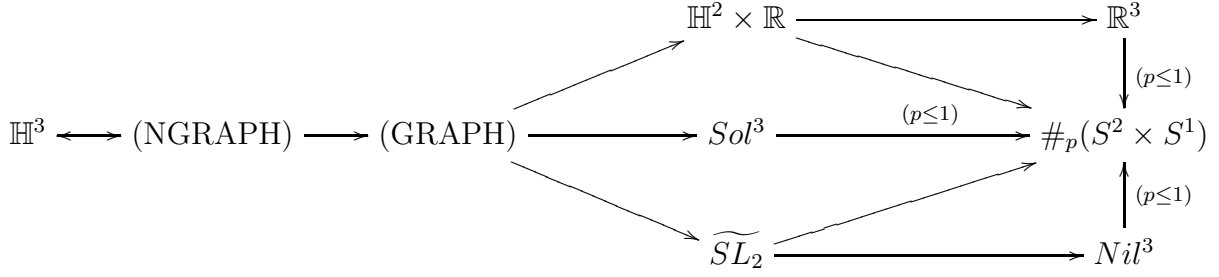

The proof of Theorem \ref{t:order3-mfds} concerning maps between aspherical $3$-manifolds is given in Wang's paper~\cite{Wang:3-mfdsasp}. Concerning maps from $\mathbb{H}^2 \times \R$-manifolds to $\widetilde{SL_2}$- or $Nil^3$-manifolds, or when the target manifold is finitely covered by $\#_p(S^2\times S^1)$ the proof is given in~\cite{KotschickNeofytidis}. As pointed out in~\cite{Wang:3-mfdsasp}, some of the non-existence results of the above theorem can be deduced using well-known tools, such as Gromov's simplicial volume, Thurston's norm and the Seifert volume. However, the proofs in~\cite{Wang:3-mfdsasp} do not rely on this machinery and are given in a uniform elementary way, using only properties of the fundamental groups and incompressible surfaces of 3-manifolds. 

In fact, there is one case which is not explicitly treated neither in~\cite{Wang:3-mfdsasp} nor in~\cite{KotschickNeofytidis}, namely, that $S^2 \times S^1$ admits a non-zero degree map from a $Sol^3$-manifold. Recall that every closed $Sol^3$-manifold is virtually a mapping torus of $T^2$ with hyperbolic monodromy~\cite{Scott:3-mfds}. Therefore, a map for this remaining case can be obtained similarly to the construction given in~\cite[Prop. 6]{KotschickNeofytidis}, where it was shown that a $T^2$-bundle over $S^1$ with monodromy 
$\left( \begin{array}{cc}
  1 & 1 \\
  0 & 1 \\
\end{array} \right)$ dominates $S^2\times S^1$. We only need to replace the monodromy of the mapping torus of $T^2$ by a hyperbolic one, for example by
$\left( \begin{array}{cc}
  2 & 1 \\
  1 & 1 \\
\end{array} \right)$; see the proof of~\cite[Prop. 6]{KotschickNeofytidis} for details.

Furthermore, we note that the restriction $p \leq 1$ for the arrows from $Sol^3$, $Nil^3$ and $\R^3$ to $\#_p (S^2 \times S^1)$ in Figure
\ref{f:order3-mfds} is required for the following reasons:
\begin{itemize}
 \item[($Sol^3$)] The first Betti number of closed $Sol^3$-manifolds is one and so these manifolds cannot dominate $\#_p (S^2 \times S^1)$ when $p \geq 2$ (recall that a map of non-zero degree induces epimorphisms in rational homology).
 \item[($Nil^3$)] Let $M$ be a closed $Nil^3$-manifold. After passing to a finite covering, if necessary, we may assume that $M$ is a non-trivial circle bundle over $T^2$~\cite{Scott:3-mfds}. Suppose that there is a continuous map $f \colon M \longrightarrow \#_p (S^2 \times S^1)$, where $p \geq 2$, such that
$\pi_1(f)(\pi_1(M))$ is a finite index
subgroup of $\pi_1(\#_k(S^2\times S^1))=F_p$, i.e. $\pi_1(f)(\pi_1(M)) = F_l$ for some $l\geq p\geq 2$. The homomorphism $\pi_1(f) \colon \pi_1(M) \longrightarrow F_l$ must factor
through $\pi_1(T^2) = \Z^2$, because free groups on more than one generators do not have center, whereas $C(\pi_1(M))=\Z$. However, $\Z^2$ cannot surject
onto such a free group and so $f$ must be of zero degree.
\item[($\R^3$)] A flat $3$-manifold has fundamental group virtually $\Z^3$, which cannot surject onto a free group on more than one generators.
\end{itemize}

\begin{rem}
Note that if we restrict to the class of closed oriented aspherical $3$-manifolds and to degree one maps, then the domination
relation defines a partial order on those manifolds, because $3$-manifold groups are Hopfian 
(i.e. every surjective endomorphism is an isomorphism), by Perelman's proof of the geometrization conjecture. For further details, we refer to the works of
Wang~\cite{Wang:3-mfdsasp,Wang:inj} and Rong~\cite{Rong}. 
\end{rem}

\subsection{A remark about $\#(S^2\times S^1)$}
In~\cite{KotschickNeofytidis}, it was shown that a connected sum $\#_p(S^2\times S^1)$ is dominated by both a non-trivial circle bundle over a closed oriented surface $\Sigma_p$ (of genus $p$), and by the product $\Sigma_p\times S^1$. An interesting observation is that the genus $p$ is the smallest possible:

\begin{lem}\label{l:sharpgenus}
 Let $M$ be a circle bundle over a closed oriented surface $\Sigma_g$. If $M\geq\#_p (S^2 \times S^1)$, then $g \geq p$. 
\end{lem}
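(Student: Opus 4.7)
The plan is to work at the level of fundamental groups, exploiting the rigidity of homomorphisms from surface groups to free groups. Let $f\colon M\to \#_p(S^2\times S^1)$ be a map of non-zero degree and write $F_p := \pi_1(\#_p(S^2\times S^1))$. The cases $p\leq 1$ can be disposed of by inspection (e.g.\ a lens space has finite fundamental group and so cannot dominate $S^2\times S^1$), so I focus on $p\geq 2$.

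The first step is to show that $G := \pi_1(f)(\pi_1(M)) \subseteq F_p$ has finite index: otherwise $f$ lifts to the (non-compact) cover of the target corresponding to $G$, forcing $\deg f = 0$. By the Nielsen--Schreier formula, $G$ is itself free, of rank $k = [F_p:G](p-1) + 1 \geq p$. Next, the fiber subgroup $\Z\hookrightarrow \pi_1(M)$ is normal (as the kernel of $\pi_1(M)\to\pi_1(\Sigma_g)$), so $\pi_1(f)(\Z)$ is a normal cyclic subgroup of $G\cong F_k$. For $k\geq 2$ this subgroup must be trivial, because every non-trivial element of a free group has cyclic centralizer, and a non-abelian free group is not virtually cyclic; hence $\pi_1(f)$ descends to a homomorphism $\overline{f}\colon \pi_1(\Sigma_g)\to F_k$ whose image still has finite index.

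To finish, realize $\overline{f}$ by a map $\Sigma_g\to \bigvee_k S^1 = K(F_k,1)$ and pull back cohomology: $\overline{f}^*\colon H^1(\bigvee_k S^1;\Q)\to H^1(\Sigma_g;\Q)$ is injective (since $\overline{f}$ is rationally surjective on abelianizations), and its image is isotropic for the cup-product pairing because $H^2(\bigvee_k S^1;\Q)=0$. Since the cup product on $H^1(\Sigma_g;\Q)\cong \Q^{2g}$ is a non-degenerate symplectic form, its isotropic subspaces have dimension at most $g$, yielding $k\leq g$. Combining the two bounds gives $p\leq k\leq g$. I expect the delicate step to be the factorization across the fiber (i.e.\ the normal-cyclic-subgroup fact for free groups); once that is in place, the isotropic-subspace argument for surface groups mapping to free groups is classical.
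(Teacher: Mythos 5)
Your proof is correct and follows essentially the same route as the paper's: pass to the finite-index free image $F_k$ with $k\geq p$, kill the fiber class to factor $\pi_1(f)$ through $\pi_1(\Sigma_g)$, and then bound $k\leq g$ by the isotropic-subspace/cup-product argument. The only cosmetic difference is that you kill the fiber via normality of cyclic subgroups in free groups, whereas the paper uses that the fiber is central in $\pi_1(M)$ and that non-abelian free groups have trivial center.
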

\begin{proof}
 The interesting cases occur when $p \geq 2$. Suppose that $f \colon M \longrightarrow \#_p (S^2 \times S^1)$ is a map of non-zero degree. Then 
the base surface $\Sigma_g$ of $M$ is aspherical and $\pi_1(f)(\pi_1(M))$ is a free group on $l \geq p$ generators. The infinite cyclic group generated by
the circle fiber of $M$ is central in $\pi_1(M)$, and therefore is mapped trivially in $F_l$, which means that $\pi_1(f)$ factors through
$\pi_1(\Sigma_g)$. Since the degree of $f$ is not zero, we obtain
an injective homomorphism 
\[
 H^1(f) \colon H^1(F_l) \longrightarrow H^1(\Sigma_g).
\]
(Note that both $H^1(F_l)$ and $H^1(\Sigma_g)$ are torsion-free.) The cup product of any two elements $\alpha_1,\alpha_2$ in $H^1(F_l)$ is trivial, because
$H^2(F_l) = 0$. By the naturality of the cup product, we have that $H^1(f)(\alpha_1) \cup H^1(f)(\alpha_2)$ vanishes as well.
This implies that $l \leq \frac{1}{2} \dim H^1(\Sigma_g) = g$, because otherwise the intersection form of $\Sigma_g$ would be degenerate.
\end{proof}

 \section{The $4$-dimensional aspherical geometries}\label{s:4-geom}
  
In this section we enumerate Thurston's aspherical geometries in dimension four and give some properties that we will need for our proofs.

The $4$-dimensional Thurston's geometries were classified by Filipkiewicz~\cite{Filipkiewicz}. We list the aspherical geometries that are realized by compact manifolds, following Wall's papers~\cite{Wall:geom4-mfds1} and~\cite{Wall:geom4-mfds2}. 
This list (Table \ref{table:4geom}) will be used as an organizing principle for the proof of Theorem \ref{t:order4}.

\begin{table}
\centering
{\small
\begin{tabular}{c|c}
Type & Geometry $\mathbb{X}^4$\\
\hline
& \\
          & $\mathbb{H}^3\times\mathbb{R}$, $Sol^3\times\R$, \\
Product   & $\widetilde{SL_2}\times\mathbb{R}$, $Nil^3\times\mathbb{R}$, \\
          & $\mathbb{H}^2\times\mathbb{R}^2$, $\R^4$, \\
          & $\mathbb{H}^2\times\mathbb{H}^2$ \\&\\
Solvable    & $Nil^4$, \\
non-product & $Sol^4_{m \neq n}$, $Sol^4_0$,\\
            & $Sol^4_1$\\&\\
Hyperbolic & $\mathbb{H}^4$, $\mathbb{H}^2(\mathbb{C})$\\&\\            
\end{tabular}}
\vspace{9pt}
\caption{{\small The $4$-dimensional aspherical Thurston geometries with compact representatives.}}\label{table:4geom}
\end{table}

\medskip

\noindent{{\bf Product geometries.}} Seven of the aspherical geometries are products of lower dimensional geometries: $\mathbb{H}^3 \times \R$,
$Sol^3 \times \R$,
$\widetilde{SL_2} \times \R$, $Nil^3 \times \R$, $\mathbb{H}^2 \times \R^2$, $\R^4$ and $\mathbb{H}^2 \times \mathbb{H}^2$. 

Closed manifolds possessing a geometry of type $\mathbb{X}^3 \times \R$ satisfy the following property:

\begin{thm}[\normalfont{\cite[Sections 8.5 and 9.2]{Hillman}}]\label{t:hillmanproducts}
 Let $\mathbb{X}^3$ be a $3$-dimensional aspherical geometry. A closed $4$-manifold possessing the geometry $\mathbb{X}^3 \times \R$ is finitely
covered by a product $N \times S^1$, where $N$ is a closed oriented $3$-manifold carrying the geometry $\mathbb{X}^3$.
\end{thm}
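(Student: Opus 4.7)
The plan is to produce a finite-index subgroup $\Gamma_1\le\Gamma:=\pi_1(M)$ that is abstractly isomorphic to $\pi_1(N)\times\Z$, where $N$ is a closed $\mathbb{X}^3$-manifold; by the uniqueness of $K(\pi,1)$ spaces among closed aspherical manifolds, the corresponding finite cover of $M$ will then be diffeomorphic to $N\times S^1$. Throughout, I view $\Gamma$ as a discrete, cocompact, torsion-free subgroup of $G:=\mathrm{Isom}(\mathbb{X}^3\times\R)$.

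\emph{Step 1 (setup).} For each aspherical $3$-dimensional geometry $\mathbb{X}^3$ other than $\R^3$ or $\mathbb{H}^2\times\R$ (which fold into the $4$-geometries $\R^4$ and $\mathbb{H}^2\times\R^2$ and are handled classically via the Bieberbach theorem and a direct product argument), the identity component of $G$ has the product form $G^0=\mathrm{Isom}^0(\mathbb{X}^3)\times\R$, the second factor acting on $\R$ by translations. After replacing $\Gamma$ by $\Gamma\cap G^0$ of finite index, I may assume $\Gamma\le G^0$. The central $\R$-factor in $G^0$ then distinguishes a subgroup $C:=\Gamma\cap(\{e\}\times\R)\le Z(\Gamma)$, which is either trivial or infinite cyclic.

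\emph{Step 2 (central extension structure).} The key algebraic step is to show that, after passage to a further finite-index subgroup of $\Gamma$, the subgroup $C$ is infinite cyclic and the quotient $\Gamma/C$ is isomorphic to the fundamental group of a closed $\mathbb{X}^3$-manifold $N$. For $\mathbb{X}^3=\mathbb{H}^3$, the centre of $\mathrm{Isom}^0(\mathbb{H}^3)$ is trivial, so $Z(\Gamma)=C$ automatically, and Mostow rigidity identifies $\Gamma/C$ with a hyperbolic $3$-manifold group. For $\mathbb{X}^3\in\{Sol^3,Nil^3\}$, Mal'cev coordinates on the appropriate solvable/nilpotent lattice yield the same conclusion. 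For $\mathbb{X}^3=\widetilde{SL_2}$, one uses the Seifert fibration structure of $3$-manifolds modelled on this geometry to read off the central extension description of $\Gamma$.

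\emph{Step 3 (virtually splitting the extension).} Given the short exact sequence
\[
1\longrightarrow C\longrightarrow\Gamma\longrightarrow\pi_1(N)\longrightarrow 1
\]
with $C\cong\Z$ central, virtual splitting is equivalent to the Euler class $e\in H^2(\pi_1(N);\Z)\cong H^2(N;\Z)$ of the associated circle bundle $M\to N$ being torsion. Because the universal cover of $M$ is the genuine product $\mathbb{X}^3\times\R$ (the $\R$-factor being a direct factor, without warping or twisting), the real Euler class vanishes, so $e$ is torsion, and a cover of degree equal to the order of $e$ trivialises the bundle, yielding a diffeomorphism of this cover with $N'\times S^1$ for some finite cover $N'$ of $N$. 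A final passage to an orientation double cover, if necessary, completes the construction.

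\emph{Main obstacle.} The principal difficulty is Step~3: justifying that the real Euler class of the central extension actually vanishes. This is where one genuinely uses that the geometry is a \emph{direct} product $\mathbb{X}^3\times\R$, as opposed to a solvable non-product variant such as $Sol_0^4$ or $Nil^4$; the $\R$-foliation of the universal cover descends to a foliation of $M$ whose holonomy is encoded by $C$, and the direct product structure forces this foliation to be trivial in real cohomology. Step~2 reduces in each case to well-known rigidity theorems for lattices in the relevant Lie groups, but stating them uniformly across all aspherical $3$-geometries requires care.
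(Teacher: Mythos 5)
The paper does not prove this statement itself; it is imported verbatim from Hillman (Sections 8.5 and 9.2), so your argument has to stand on its own, and as written it has two genuine gaps, which are really one. First, in Step 2 you assert that after passing to a further finite-index subgroup the group $C=\Gamma\cap(\{e\}\times\R)$ becomes infinite cyclic. Passing to a finite-index subgroup can only shrink $C$, never enlarge it, so you must prove $C\neq 1$ for the lattice you start with, and this can genuinely fail for the specific subgroup $\{e\}\times\R$. Concretely, let $\Lambda_1=\langle x,y,z \mid [x,y]=z,\ z\ \mathrm{central}\rangle$ be the integer lattice in $Nil^3$ and let $\Lambda\le Nil^3\times\R$ be generated by $(x,0)$, $(y,0)$, $(z,0)$ and $(z^{\alpha},1)$ with $\alpha$ irrational, where $z^{\alpha}$ denotes the central one-parameter subgroup. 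This is a discrete cocompact subgroup, it is abstractly isomorphic to $\Lambda_1\times\Z$ via $(\gamma,m)\mapsto(\gamma z^{m\alpha},m)$, yet $\Lambda\cap(\{e\}\times\R)=1$: the $\Z$-factor one must split off is \emph{not} the geometric $\R$-direction. The same irrational-placement phenomenon occurs for $\widetilde{SL_2}\times\R$, where the centre of the identity component of the isometry group is two-dimensional. (For $\mathbb{H}^3\times\R$ your step does hold, but for a nontrivial reason you never invoke, namely the Auslander--Mostow theorem that a cocompact lattice meets the radical $\{e\}\times\R$ in a lattice of the radical.)

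Second, the justification in Step 3 that the real Euler class vanishes ``because the universal cover of $M$ is the genuine product $\mathbb{X}^3\times\R$'' is not a valid inference: the three-dimensional model $\widetilde{SL_2}$ is diffeomorphic to $\mathbb{H}^2\times\R$ as a smooth manifold, yet its lattices are central extensions of surface groups with \emph{non}-torsion Euler class, so a smooth product structure on the universal cover says nothing about the extension class. What actually forces the class to be torsion is that $\R$ is a direct factor of the isometry group \emph{and} that $C$ is a cocompact lattice in that factor: choosing coset representatives $(q,t_q)\in\Gamma$ over $q\in Q:=\Gamma/C$ exhibits the extension cocycle $(q_1,q_2)\mapsto t_{q_1}+t_{q_2}-t_{q_1q_2}$ as a coboundary with real coefficients, so its image in $H^2(Q;\R)$ vanishes. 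But this computation presupposes exactly the fact missing from Step 2, namely that $C\cong\Z$ sits cocompactly in $\{e\}\times\R$. To repair the proof you must either show the lattice can be replaced (up to conjugacy and finite index) by one meeting the $\R$-factor in a lattice, or argue algebraically with the Hirsch--Plotkin radical of $\pi_1(M)$ and split off a suitable central $\Z$ that need not be the geometric $\R$-direction --- which is essentially what Hillman does, and why the statement is quoted rather than reproved in the paper.
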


The geometry $\mathbb{H}^2 \times \mathbb{H}^2$ can be realized both by manifolds that are virtual products of two closed hyperbolic
surfaces and by manifolds that are not even (virtual) surface bundles. These two types are known as the {\em reducible} and the {\em irreducible}
$\mathbb{H}^2 \times \mathbb{H}^2$ geometry respectively; see~\cite[Section 9.5]{Hillman} for further details and characterizations of those geometries.

\medskip

\noindent{{\bf Solvable non-product geometries.}} There exist four aspherical non-product geometries of solvable type. Below, we describe their
model Lie groups together with some characterizations of manifolds modeled on each of those geometries.

The nilpotent Lie group $Nil^4$ is defined as the semi-direct product $\R^3 \rtimes \R$, where $\R$ acts on $\R^3$ by
\[
t \mapsto 
\left(\begin{array}{ccc}
   1 & e^t & 0 \\
   0 & 1 & e^t \\
   0 & 0 & 1   \\
\end{array} \right).
\]

A closed $Nil^4$-manifold is characterized by the following:

\begin{prop}{\normalfont (\cite[Prop. 6.10]{NeoIIPP}).}\label{p:nil4}
A closed $Nil^4$-manifold $M$ is a virtual circle bundle over a closed oriented $Nil^3$-manifold and the center of $\pi_1(M)$ remains infinite cyclic in finite covers.
\end{prop}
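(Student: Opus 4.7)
The plan is to derive the virtual circle bundle structure from the nilpotent Lie algebra of $Nil^4$ and then to transfer the information on centers to lattices via the Mal'cev correspondence. First I would compute the center of the Lie algebra $\mathfrak{n}^4$ of $Nil^4$ directly from the semidirect-product structure: the underlying nilpotent derivation acting on $\R^3$ has a one-dimensional kernel, and that kernel coincides with $\mathfrak{z}(\mathfrak{n}^4)$. Exponentiating, $Z(Nil^4)$ is a one-parameter subgroup isomorphic to $\R$, and the quotient $Nil^4/Z(Nil^4)$ is a three-dimensional simply connected nilpotent Lie group whose sole non-trivial bracket is the Heisenberg one; hence $Nil^4/Z(Nil^4)\cong Nil^3$, yielding a central extension
\[
1\longrightarrow\R\longrightarrow Nil^4\longrightarrow Nil^3\longrightarrow 1.
\]

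Next, I would write $\pi_1(M)=\Gamma$ as a discrete cocompact subgroup of $\mathrm{Isom}(Nil^4)$. Since the translation part $Nil^4$ has finite index in the full isometry group, $\Gamma_0:=\Gamma\cap Nil^4$ is a cocompact lattice in $Nil^4$ of finite index in $\Gamma$, corresponding to a finite cover $M_0\to M$. By the Mal'cev correspondence for lattices in simply connected nilpotent Lie groups, $Z(\Gamma_0)=\Gamma_0\cap Z(Nil^4)\cong\Z$ and $\Gamma_0/Z(\Gamma_0)$ embeds as a cocompact lattice $\Lambda$ in $Nil^3$. The induced central extension
\[
1\longrightarrow\Z\longrightarrow\Gamma_0\longrightarrow\Lambda\longrightarrow 1
\]
realizes $M_0$ as the total space of a principal $S^1$-bundle over the closed aspherical $Nil^3$-manifold $N=Nil^3/\Lambda$, which is the claimed virtual circle bundle structure (passing to a further finite orientation cover of $N$ if required).

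For the second assertion, let $\Gamma'\leq\Gamma$ be any finite index subgroup, and set $\Gamma'_0:=\Gamma'\cap Nil^4$, which is again a cocompact lattice in $Nil^4$. Any $\gamma\in Z(\Gamma')$ centralizes $\Gamma'_0$, and since $\Gamma'_0$ is Zariski-dense in $Nil^4$ by Mal'cev, $\gamma$ centralizes all of $Nil^4$ inside $\mathrm{Isom}(Nil^4)$. Identifying the centralizer of $Nil^4$ in its isometry group with $Z(Nil^4)\cong\R$ then forces $Z(\Gamma')\subset Z(Nil^4)$; intersecting with the discrete cocompact $\Gamma'$ yields $Z(\Gamma')\cong\Z$. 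The main obstacle is this last identification: one must inspect the finite quotient $\mathrm{Isom}(Nil^4)/Nil^4$ and check that no non-trivial coset centralizes $Nil^4$, which uses the explicit description of $\mathrm{Isom}(Nil^4)$ as $Nil^4$ extended by a compact stabilizer whose conjugation action on $Nil^4$ has trivial fixed subgroup outside $Z(Nil^4)$.
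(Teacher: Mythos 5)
The paper does not actually prove this proposition --- it imports it verbatim from \cite[Prop.~6.10]{NeoIIPP} --- so you are supplying an independent argument, and your route is the natural one. The Lie-theoretic setup is correct: the center of the Lie algebra of $Nil^4=\R^3\rtimes\R$ is the kernel of the nilpotent derivation, the quotient by it is the Heisenberg group, and since the point stabilizer of $\mathrm{Isom}(Nil^4)$ is finite, $\Gamma_0=\pi_1(M)\cap Nil^4$ is a cocompact lattice in $Nil^4$ of finite index in $\pi_1(M)$. Mal'cev theory then gives $Z(\Gamma_0)=\Gamma_0\cap Z(Nil^4)\cong\Z$, with $\Gamma_0/Z(\Gamma_0)$ a lattice $\Lambda$ in $Nil^3$, and the induced principal $S^1$-bundle $\Gamma_0\backslash Nil^4\to\Lambda\backslash Nil^3$ is the claimed virtual circle bundle (the base is a nilmanifold, hence parallelizable and automatically orientable, so no further orientation cover is needed).

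The gap is in the last step of the center argument. What you actually establish is the upper bound $Z(\Gamma')\subseteq\Gamma'\cap Z(Nil^4)$; ``intersecting with the discrete cocompact $\Gamma'$'' then only yields that $Z(\Gamma')$ is trivial or infinite cyclic. To conclude $Z(\Gamma')\cong\Z$ you must also prove the reverse inclusion, i.e.\ that a generator of $\Gamma'\cap Z(Nil^4)$ (which is indeed infinite cyclic, a lattice in a simply connected nilpotent Lie group meeting the center in a lattice) commutes with all of $\Gamma'$ --- equivalently, that every element of $\Gamma'$ acts trivially by conjugation on $Z(Nil^4)$. This is false for arbitrary discrete cocompact subgroups of $\mathrm{Isom}(Nil^4)$: the point stabilizer is $(\Z/2)^2$, generated on the Lie algebra (with $[e_4,e_3]=e_2$, $[e_4,e_2]=e_1$) by $\alpha\colon(e_1,e_2,e_3,e_4)\mapsto(e_1,-e_2,e_3,-e_4)$ and $\beta\colon(e_1,e_2,e_3,e_4)\mapsto(-e_1,-e_2,-e_3,e_4)$, and both $\beta$ and $\alpha\beta$ invert the center, so a holonomy containing either of them forces $Z(\pi_1)$ to be trivial (compare the Klein bottle group in the flat case). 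What rescues the statement is orientability, which your proof never invokes: $\beta$ and $\alpha\beta$ have determinant $-1$, whereas the unique non-trivial orientation-preserving stabilizer element $\alpha$ fixes $Z(Nil^4)$ pointwise; hence for oriented $M$ every element of $\pi_1(M)$ centralizes $Z(Nil^4)$, giving $\Gamma'\cap Z(Nil^4)\subseteq Z(\Gamma')$ and closing the argument. You did flag the need to inspect $\mathrm{Isom}(Nil^4)/Nil^4$, but only for the centralizer computation (the upper bound, which in fact can also be handled abstractly, since a finite-order automorphism of a simply connected nilpotent group that is inner must be trivial); the inspection is genuinely indispensable for the lower bound, and that is where orientability must enter.
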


The model spaces for the three non-product solvable -- but not nilpotent -- geometries are defined as follows:

Let $m$ and $n$ be positive integers and $a > b > c$ reals such that $a+b+c=0$ and $e^a,e^b,e^c$ are
roots of the equation $P_{m,n}(\lambda)=\lambda^3-m\lambda^2+n\lambda-1=0$. If $m \neq n$, the Lie group $Sol_{m \neq n}^4$ is defined as $\R^3 \rtimes
\R$, where $\R$ acts on $\R^3$ by
\[
t \mapsto 
\left(\begin{array}{ccc}
   e^{at} & 0 & 0 \\
   0 & e^{bt} & 0 \\
   0 & 0 & e^{ct} \\
\end{array} \right).
\]
We remark that the case $m=n$ gives $b = 0$ and corresponds to the product geometry $Sol^3 \times \R$. 

If we require two equal roots of the polynomial $P_{m,n}$, then we obtain the model space of the $Sol_0^4$ geometry, again defined as $\R^3
\rtimes \R$, where now the action of $\R$ on $\R^3$ is given by
\[
t \mapsto 
\left(\begin{array}{ccc}
   e^{t} & 0 & 0 \\
   0 & e^{t} & 0 \\
   0 & 0 & e^{-2t} \\
\end{array} \right).
\]

It was shown in~\cite{KotschickLoeh1} that aspherical manifolds (more generally, rationally essential manifolds) are not dominated by direct products if their fundamental group is {\em not presentable by products}. A group $\Gamma$ is not presentable by products if for every homomorphism $\varphi\colon\Gamma_1\times\Gamma_2\longrightarrow\Gamma$ with $[\Gamma:\mathrm{im}(\varphi)]<\infty$, one of the images $\varphi(\Gamma_i)$ is finite. Manifolds modeled on one of the geometries $Sol_{m\neq n}^4$ or $Sol_0^4$ fulfill the latter property:

\begin{prop}{\normalfont (\cite[Prop. 6.13]{NeoIIPP}).}\label{p:sol&sol}
The fundamental group of a closed $4$-manifold possessing one of the geometries $Sol_{m\neq n}^4$ or $Sol_0^4$ is not presentable by products.
\end{prop}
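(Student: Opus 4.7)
The plan is to verify the definition directly: for any homomorphism $\varphi \colon \Gamma_1 \times \Gamma_2 \to \Gamma$ with finite-index image, one of the $A_i := \varphi(\Gamma_i)$ must be finite. Since being not presentable by products is preserved under commensurability (given a finite-index $\Gamma'\leq \Gamma$, set $\Gamma_i' := \{x\in\Gamma_i : \varphi(x,e)\in\Gamma'\}$; then $\Gamma_i'$ has finite index in $\Gamma_i$ and $\varphi(\Gamma_1'\times\Gamma_2')$ has finite index in $\Gamma'$), I may pass to a finite-index subgroup and assume that $\Gamma$ is a genuine lattice in the Lie group $G=\R^3\rtimes\R$. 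The standard structure theory of lattices in solvable Lie groups then gives $N:=\Gamma\cap\R^3\cong\Z^3$, normal in $\Gamma$ with $\Gamma/N\cong\Z$, and a generator of $\Gamma/N$ acts on $N$ by an integer matrix $A_0$ whose eigenvalues are the positive reals $e^{a},e^{b},e^{c}$ (all nonzero, since $m\neq n$ forces $b\neq 0$ and $a+b+c=0$ with $a>b>c$ then rules out $a=0$ or $c=0$) in the $Sol^4_{m\neq n}$ case, or $e^{a},e^{a},e^{-2a}$ with $a\neq 0$ in the $Sol^4_0$ case. The key observation is that none of these eigenvalues equals $1$, so $A_0^{k}-I$ is invertible over $\Q$ for every nonzero integer $k$.

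Assume for contradiction that both $A_1,A_2$ are infinite, and project via $\pi \colon \Gamma \to \Gamma/N \cong \Z$. If $\pi(A_1)=\pi(A_2)=0$, then $A_1A_2 \subseteq N$ has infinite index in $\Gamma$, a contradiction. If exactly one image is trivial, say $\pi(A_1)=0$, pick $g \in A_2$ with $\pi(g)\neq 0$: conjugation by $g$ acts on $N$ as $A_0^{k}$ for some $k\neq 0$, and since $A_1 \subseteq N$ commutes with $g$ we get $A_1 \subseteq \ker(A_0^{k}-I)=\{0\}$, contradicting the infiniteness of $A_1$. In the remaining, essential case both $\pi(A_i)$ are infinite, so I can choose $g_i\in A_i$ with $\pi(g_i)\neq 0$, and these commute in $G$.

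A short calculation in $G$ shows that $(w,s)$ commutes with $(v,t)$ if and only if $(I-A(t))w=(I-A(s))v$; for $t\neq 0$ the matrix $I-A(t)$ is invertible, so $w$ is determined by $s$ and the centraliser $Z_G((v,t))$ is a $1$-dimensional closed abelian Lie subgroup of $G$ (isomorphic to $\R$). Hence $A_2\subseteq Z_G(g_1)$ is a discrete subgroup of a $1$-dimensional abelian Lie group, so virtually cyclic of Hirsch rank at most $1$; by symmetry the same holds for $A_1 \subseteq Z_G(g_2)$. But then $A_1A_2$ has Hirsch rank at most $2$, contradicting that it has finite index in $\Gamma$, whose Hirsch rank is $4$. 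The main technical point is the centraliser computation together with ensuring the no-$1$-eigenvalue property for every nonzero power of $A_0$; this is precisely where the arithmetic hypotheses distinguishing $Sol^4_{m\neq n}$ and $Sol^4_0$ from the product geometry $Sol^3\times\R$ enter. Once that algebraic input is in place, the case analysis handles both geometries uniformly.
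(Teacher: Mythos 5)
The paper does not actually prove this proposition---it is quoted from \cite[Prop.~6.13]{NeoIIPP}---so your argument has to stand on its own, and it does: I find it correct and complete. The commensurability reduction is the right move and your parenthetical construction of $\Gamma_i'$ does establish the needed direction (if a finite-index subgroup is not presentable by products, neither is $\Gamma$; note only that for $i=2$ one should take $\varphi(e,x)\in\Gamma'$). The structural input $\Gamma\cong\Z^3\rtimes_{A_0}\Z$ is standard and is also available directly from Theorem 3.6(1) of the paper (these manifolds are mapping tori of $T^3$), and your identification of the decisive algebraic fact---no eigenvalue of $A_0^k$ equals $1$ for $k\neq 0$, which is exactly what fails for $Sol^3\times\R$---is on target. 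The three-way case analysis on $\pi(A_1),\pi(A_2)$ is airtight: the mixed case is killed by invertibility of $A_0^k-I$, and in the main case your centralizer computation in $G=\R^3\rtimes\R$ correctly shows $Z_G(v,t)\cong\R$ for $t\neq 0$, forcing both $A_i$ to be cyclic and $A_1A_2$ to have Hirsch length at most $2<4$. For comparison, the cited source organizes the same content through the Kotschick--L\"oh dichotomy: a presentation by products with both factors infinite forces either a finite-index subgroup with infinite center or two infinite commuting subgroups generating a finite-index subgroup with finite intersection, and both branches are excluded by the same eigenvalue condition. Your version trades that general dichotomy for an explicit Lie-group centralizer computation; it is more hands-on but equally valid, and it has the virtue of treating $Sol^4_{m\neq n}$ and $Sol^4_0$ uniformly.
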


The last solvable model space is an extension of $\R$ by the $3$-dimensional Heisenberg group
$
 Nil^3 = 
\Biggl\{ \left( \begin{array}{ccc}
  1 & x & z \\
  0 & 1 & y \\
  0 & 0 & 1 \\
\end{array} \right) \biggl\vert
\ x,y,z \in \R \Biggl\}.
$
Namely, the Lie group $Sol_1^4$ is defined as the semi-direct product $Nil^3 \rtimes \R$, where $\R$ acts on $Nil^3$ by
\[
t \mapsto 
\left(\begin{array}{ccc}
   1 & e^{-t}x & z \\
   0 & 1 & e^{t}y \\
   0 & 0 & 1 \\
\end{array} \right).
\]

Manifolds modeled on this geometry fulfill the following property:

\begin{prop}{\normalfont \cite[Prop. 6.15]{NeoIIPP}).}\label{p:sol1}
A closed $Sol_1^4$-manifold $M$ is a virtual circle bundle over a mapping torus of $T^2$ with hyperbolic monodromy (i.e. over a $Sol^3$-manifold).
\end{prop}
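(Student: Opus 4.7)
The plan is to exhibit a central $\R$-subgroup of $Sol_1^4$ whose quotient is the model group of $Sol^3$, transfer this to a central extension of lattices, and then appeal to Scott's structure result for closed $Sol^3$-manifolds. Concretely, let $Z=\{(0,0,z)\mid z\in\R\}$ be the centre of the Heisenberg group $Nil^3$. Reading off the given matrix formula, the action of $\R$ on $Nil^3$ fixes the $z$-coordinate pointwise; together with the fact that $Z$ is central in $Nil^3$, this forces $Z$ to lie in the centre of the semidirect product $Sol_1^4=Nil^3\rtimes\R$, and a short computation with the semidirect-product law shows that $Z$ is in fact the full centre of $Sol_1^4$. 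The quotient $Sol_1^4/Z$ is then $\R^2\rtimes_\phi\R$ with $\phi_t=\mathrm{diag}(e^{-t},e^t)$, which is precisely the model Lie group of $Sol^3$. This yields a central extension
$$1\longrightarrow \R\longrightarrow Sol_1^4\longrightarrow Sol^3\longrightarrow 1.$$

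Now let $\Gamma=\pi_1(M)$, a cocompact lattice in $Sol_1^4$. The crucial step is to show that, after passing to a finite-index subgroup of $\Gamma$ (equivalently, a finite cover of $M$), the intersection $\Gamma\cap Z$ is a cocompact lattice in $Z\cong\R$, hence an infinite cyclic group. This follows from the classical theory of lattices in simply connected solvable Lie groups (Mostow--Auslander): since $Z$ is a closed normal subgroup of $Sol_1^4$ whose quotient is again simply connected and solvable, $\Gamma\cap Z$ has full rank in $Z$ after replacing $\Gamma$ by a suitable finite-index subgroup. Granted this, the central extension above restricts to a short exact sequence
$$1\longrightarrow \Z\longrightarrow \Gamma\longrightarrow \Gamma'\longrightarrow 1,$$
where $\Gamma'$ is a cocompact lattice in $Sol^3$.

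Geometrically, this short exact sequence realises $M$ as a principal $S^1$-bundle over the closed $Sol^3$-manifold $N=\Gamma'\backslash Sol^3$. Combining with the standard fact~\cite{Scott:3-mfds} that every closed $Sol^3$-manifold is virtually a mapping torus of $T^2$ with hyperbolic monodromy, the proposition follows. The main obstacle I anticipate is the lattice-intersection step: verifying that $\Gamma\cap Z$ becomes a lattice in $Z$ after a finite cover requires external input from Mostow--Auslander theory rather than a direct matrix calculation. The identification of $Z$ with the centre of $Sol_1^4$ and of $Sol_1^4/Z$ with $Sol^3$, by contrast, is a matter of routine computation with the given formulas, and the final step reduces to the cited three-dimensional result.
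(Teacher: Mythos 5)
Your argument is correct and follows essentially the same route as the source the paper cites for this statement (the paper itself gives no proof of Proposition \ref{p:sol1}, quoting it from \cite[Prop.~6.15]{NeoIIPP}): identify the one-parameter centre $Z$ of $Sol_1^4$ coming from the centre of $Nil^3$, observe $Sol_1^4/Z\cong Sol^3$, use the Mostow--Auslander lattice theory (via the nilradical $Nil^3$, in which $\Gamma\cap Nil^3$ is a lattice and hence meets $Z$ in a lattice) to get the central $\Z$-extension and the circle-bundle structure, and finish with Scott's description of closed $Sol^3$-manifolds. The only point to make explicit is that one first passes to a finite cover so that $\pi_1(M)$ lies in $Sol_1^4$ itself rather than merely in its isometry group, which your ``virtual'' phrasing already accommodates.
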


Every closed oriented $4$-manifold that possesses a solvable non-product geometry is a mapping torus:

\begin{thm}[\normalfont{\cite[Sections 8.6 and 8.7]{Hillman}}]\label{t:mappingtorisolvable} \
 \begin{itemize}
  \item[\normalfont{(1)}] A closed $Sol_0^4$- or $Sol_{m \neq n}^4$-manifold is a mapping torus of a self-homeomorphism of $T^3$.
  \item[\normalfont{(2)}] A closed oriented $Nil^4$- or $Sol_1^4$-manifold is a mapping torus of a self-homeomorphism of a $Nil^3$-manifold.
 \end{itemize}
\end{thm}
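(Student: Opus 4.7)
The statement is attributed to Hillman's book, so my sketch reconstructs the standard argument for lattices in simply connected solvable Lie groups. In each of the four cases the model $G$ is a semidirect product $H \rtimes_{\phi} \R$, where $H = \R^3$ for $Sol_0^4$ and $Sol_{m\neq n}^4$ and $H = Nil^3$ for $Nil^4$ and $Sol_1^4$, and $\phi \colon \R \to \mathrm{Aut}(H)$ is the prescribed action. Since the point stabilizer is trivial for these geometries, any closed $G$-manifold $M$ is a quotient $\Gamma \backslash G$ by a uniform lattice $\Gamma$. The plan is to produce a short exact sequence $1 \to \Gamma_0 \to \Gamma \to \Z \to 1$ with $\Gamma_0$ a lattice in $H$, which, via asphericity, identifies $M$ with a mapping torus over $H/\Gamma_0$.

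The first step is to verify that the projection $p\colon G \to G/H \cong \R$ carries $\Gamma$ onto a discrete subgroup of $\R$, hence onto $t_0 \Z$ for some $t_0 > 0$. This follows from the standard fact that a lattice in a simply connected solvable Lie group projects to a lattice in the quotient by any closed connected normal subgroup; for the four specific $G$'s above it can also be checked directly. Setting $\Gamma_0 = \Gamma \cap H$, one verifies that $\Gamma_0$ is a uniform lattice in $H$, so $H/\Gamma_0 \cong T^3$ when $H = \R^3$, and $H/\Gamma_0$ is a closed $Nil^3$-manifold when $H = Nil^3$. Picking any $\gamma \in \Gamma$ with $p(\gamma) = t_0$, conjugation by $\gamma$ gives an automorphism of $\Gamma_0$ induced by a self-diffeomorphism $\bar{\psi}$ of $H/\Gamma_0$, and since $\Gamma$ is generated by $\Gamma_0$ together with $\gamma$, the resulting extension exhibits $M$ as the mapping torus of $\bar{\psi}$.

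The main subtlety is case (2), where the orientation hypothesis enters: $\phi(t_0)$ may reverse the orientation of the $3$-dimensional base $H/\Gamma_0$, so one must use orientability of $M$ to replace the monodromy by an orientation preserving one, possibly after passing to an index-two subgroup of $\Gamma$, whose corresponding double cover of the base remains a closed $Nil^3$-manifold. A secondary but routine point is realizing the algebraic automorphism of $\Gamma_0$ by an actual self-diffeomorphism of $H/\Gamma_0$; this is immediate for $T^3$, and for $Nil^3$-manifolds it follows from the standard fact that outer automorphisms of lattices in $Nil^3$ are realized by self-diffeomorphisms of the corresponding nilmanifold.
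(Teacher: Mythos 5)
The paper offers no proof of this theorem — it is quoted verbatim from Hillman's book — so your reconstruction has to stand on its own. Its skeleton (realize $\pi_1(M)$ as an extension $1\to\Gamma_0\to\Gamma\to\Z\to 1$ with $\Gamma_0$ a lattice in $H$, then fiber using asphericity) is the right one, but two of the ``standard facts'' you invoke are false as stated, and the orientability step does not work.

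First, the point stabilizers are not trivial: the stabilizer in $\mathrm{Isom}(Sol^4_0)$ contains $O(2)$ (rotations mixing the two equal-eigenvalue directions), and the other three geometries have finite but nontrivial stabilizers. So $\pi_1(M)$ is a priori a lattice in $\mathrm{Isom}(\mathbb{X}^4)$, not in $G$, and elements with nontrivial rotational part are precisely what could make the fiber in part (1) a flat $3$-manifold with holonomy rather than $T^3$. Ruling this out is genuine content (Hillman does it by showing $\pi\cong\Z^3\rtimes_\theta\Z$ on the nose), not something granted by the geometry. Relatedly, a lattice in a simply connected solvable Lie group does \emph{not} in general project to a lattice in the quotient by an arbitrary closed connected normal subgroup; Mostow's theorem gives this for the nilradical, which covers $\R^3\subset Sol^4_0,\ Sol^4_{m\neq n}$ and $Nil^3\subset Sol^4_1$, but the nilradical of $Nil^4$ is all of $Nil^4$, so for that geometry you must separately show that your chosen normal $Nil^3\leq Nil^4$ (which is not the $\R^3$ of the paper's presentation $\R^3\rtimes\R$, and whose existence you assert without identifying it) is rational with respect to $\Gamma$, or argue group-theoretically inside the finitely generated torsion-free nilpotent group $\Gamma$.

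Second, and more seriously, your treatment of orientability in part (2) does not prove the statement. Passing to an index-two subgroup of $\Gamma$ shows that a double cover of $M$ is a mapping torus, not that $M$ is; and one cannot ``replace the monodromy'' of a given manifold — the mapping torus structure, if it exists, has whatever monodromy it has, and if the fiber is orientable and the monodromy reverses its orientation the total space is simply non-orientable. The role of orientability is substantive: the paper itself records (Hillman, Theorem 8.9) that non-orientable closed $Nil^4$- and $Sol^4_1$-manifolds admit \emph{no} such mapping torus structure at all — roughly, the relevant quotient of $\pi$ can be infinite dihedral rather than infinite cyclic, so that $M$ is a union of twisted $I$-bundles instead of fibering over $S^1$. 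Any correct argument must use orientability to produce the surjection $\pi\to\Z$ with the correct kernel; your sketch treats it as a cosmetic adjustment, which is exactly the step that is missing.
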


We note that non-orientable closed $Nil^4$- or $Sol_1^4$-manifolds are not mapping tori of $Nil^3$-manifolds~\cite[Theorem
8.9]{Hillman}. 

\medskip 

\noindent{{\bf Hyperbolic geometries.}} There exist two aspherical irreducible symmetric geometries, namely the real and the complex hyperbolic, denoted
by $\mathbb{H}^4$ and $\mathbb{H}^2(\mathbb{C})$ respectively. We will not be dealing with these geometries in Theorem \ref{t:order4}; 
see Section \ref{ss:hyper} for a brief discussion.

\medskip

A crucial property for our study is that the $4$-dimensional geometries are homotopically unique, by the following result of Wall:

\begin{thm}[\normalfont{\cite[Theorem 10.1]{Wall:geom4-mfds2}, \cite[Prop. 1]{Kotschick:4-mfds}}]\label{t:Wall4Dgeometries}
 If $M$ and $N$ are homotopy equivalent closed $4$-manifolds possessing geometries $\mathbb{X}^4$ and $\mathbb{Y}^4$ respectively, then
$\mathbb{X}^4 = \mathbb{Y}^4$.
\end{thm}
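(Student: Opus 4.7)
The plan is to leverage the homotopy invariance of the fundamental group together with the fact that each $4$-dimensional Thurston geometry imposes a distinctive algebraic signature on $\pi_1$. Since asphericity of a closed manifold is a homotopy invariant, and aspherical manifolds are $K(\pi,1)$'s (so determined up to homotopy by $\pi_1$), the aspherical case reduces to showing that the isomorphism type of $\pi = \pi_1(M)$ determines the geometry $\mathbb{X}^4$; the non-aspherical case will be handled separately by distinguishing the simply connected model spaces together with $\pi_1$.

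For the aspherical case I would partition the geometries of Table~\ref{table:4geom} by successively finer invariants of $\pi$. A coarse first partition uses growth type: virtually nilpotent ($\R^4$, $Nil^3 \times \R$, $Nil^4$), virtually solvable of exponential growth ($Sol^3 \times \R$, $Sol_0^4$, $Sol_{m\neq n}^4$, $Sol_1^4$), and the remaining product and hyperbolic geometries involving $\mathbb{H}^2$, $\mathbb{H}^3$, $\mathbb{H}^4$ or $\mathbb{H}^2(\mathbb{C})$, whose groups contain non-abelian free subgroups. Within the virtually nilpotent class, the virtual rank of the center, combined with Proposition~\ref{p:nil4}, separates the three cases: rank $4$ for $\R^4$, rank $2$ for $Nil^3 \times \R$, and rank $1$ for $Nil^4$. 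Among the non-solvable geometries, the virtual rank of the center distinguishes $\mathbb{H}^3 \times \R$ (rank $1$) from $\mathbb{H}^2 \times \R^2$ (rank $2$) from the remaining $\mathbb{H}^2 \times \mathbb{H}^2$ and purely hyperbolic cases (trivial center); the two $\mathbb{H}^2 \times \mathbb{H}^2$ geometries are separated by whether $\pi$ is virtually a non-trivial direct product of two surface groups (reducible) or not (irreducible); and the purely hyperbolic cases $\mathbb{H}^4$ and $\mathbb{H}^2(\mathbb{C})$ are separated from each other and from reducible $\mathbb{H}^2 \times \mathbb{H}^2$ by Mostow rigidity combined with standard homotopy invariants such as the signature (which vanishes for $\mathbb{H}^4$-manifolds but not in general for $\mathbb{H}^2(\mathbb{C})$-manifolds or reducible $\mathbb{H}^2 \times \mathbb{H}^2$-manifolds).

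The subtlest step is distinguishing the four solvable, non-virtually-nilpotent geometries. By Theorem~\ref{t:mappingtorisolvable}, each such $\pi$ is virtually an extension $1 \to F \to \pi' \to \Z \to 1$ with $F$ either $\Z^3$ or a $Nil^3$-lattice; by Proposition~\ref{p:sol1}, the $Nil^3$-lattice case is precisely $Sol_1^4$, which is thereby separated from the others by the property that the maximal normal nilpotent subgroup of a finite-index subgroup of $\pi$ is $2$-step rather than abelian. Among the three $T^3$-monodromy cases, the monodromy is an element of $\mathrm{SL}(3,\Z)$ whose characteristic polynomial is determined, up to replacement by a power, by the action of the quotient on the Fitting subgroup, and hence is a commensurability invariant of $\pi$. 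The eigenvalue pattern then pins down the geometry: three distinct real eigenvalues of modulus $\neq 1$ gives $Sol_{m\neq n}^4$, a repeated eigenvalue of modulus $\neq 1$ gives $Sol_0^4$, and an eigenvalue equal to $1$ corresponds to the direct $\R$-factor and hence to $Sol^3 \times \R$.

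The main obstacle is this last step: extracting the characteristic polynomial of the monodromy from the abstract group and showing that its qualitative eigenvalue pattern is a commensurability invariant. The key ingredients one must verify are that the Fitting subgroup and its rank are intrinsic to the group, that the induced quotient action on the Fitting subgroup is well-defined up to inner automorphism and passage to powers, and that the eigenvalue multiplicities and moduli are thereby group-theoretic invariants. Finally, for the non-aspherical geometries, whose simply connected models each involve an $S^2$, $S^3$, or $\mathbb{CP}^2$ factor, pairwise non-homotopy-equivalence of the universal covers together with $\pi_1$ separates the remaining cases.
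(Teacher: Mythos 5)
The paper does not prove this statement; it is quoted from Wall \cite[Theorem 10.1]{Wall:geom4-mfds2} and Kotschick \cite[Prop.~1]{Kotschick:4-mfds}, so there is no internal proof to compare against. Your overall strategy --- reduce to showing that the isomorphism type of $\pi_1$ (plus, in the non-aspherical cases, the homotopy type of the universal cover) determines the geometry, and then separate the geometries by successively finer group-theoretic invariants --- is exactly the strategy of the cited sources, and most of your case analysis is sound: the growth-type trichotomy, the treatment of the four exponential-growth solvable geometries via the Fitting subgroup and the eigenvalue pattern of the monodromy (which is indeed a commensurability invariant up to powers and inversion, and the patterns ``eigenvalue $1$'' / ``repeated eigenvalue off the unit circle'' / ``three distinct eigenvalues off the unit circle'' are stable under these operations), and the hyperbolic and $\mathbb{H}^2\times\mathbb{H}^2$ cases all work.

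There is, however, a concrete gap: you have omitted the geometry $\widetilde{SL_2}\times\R$ from your partition of the non-solvable aspherical geometries, and the invariant you propose there --- the virtual rank of the center --- cannot detect it. A closed $\widetilde{SL_2}\times\R$-manifold is finitely covered by $M\times S^1$ with $M$ a non-trivial circle bundle over a hyperbolic surface, so its fundamental group is virtually a central extension $1\to\Z^2\to\pi'\to\pi_1(\Sigma)\to 1$ with center of rank exactly $2$ --- the same as for $\mathbb{H}^2\times\R^2$, whose groups are virtually $\pi_1(\Sigma)\times\Z^2$. To separate these two you need the additional invariant already used in the $3$-dimensional dichotomy $\widetilde{SL_2}$ versus $\mathbb{H}^2\times\R$: whether the central extension virtually splits, equivalently whether $\pi$ is virtually a direct product of a surface group with $\Z^2$ (the Euler class of the circle bundle is non-zero and remains non-zero in finite covers, so for $\widetilde{SL_2}\times\R$ it never splits). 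With this one additional invariant inserted, your scheme separates all the geometries and the argument goes through; as written, it fails to distinguish $\widetilde{SL_2}\times\R$ from $\mathbb{H}^2\times\R^2$. A smaller point worth making explicit is that ``virtual rank of the center'' must be interpreted as the supremum of the rank of the center over finite-index subgroups (the center can grow under passage to finite index), and one should check it stabilizes for the groups at hand --- it does, e.g.\ by Proposition \ref{p:nil4} for $Nil^4$.
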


In particular, {\em a closed aspherical geometric $4$-manifold $M$ is finitely covered by a closed $\mathbb{X}^4$-manifold if and only if it possesses the geometry $\mathbb{X}^4$}.

\section{4-manifolds covered by products}\label{s:products}

In this section we deal with maps between closed $4$-manifolds that are virtually direct products.

\subsection{Non-existence stability between products}

We first recall from~\cite{Neothesis,KotschickLoehNeofytidis} some general results about maps between direct products.

Following the discussion of Section \ref{s:Wang}, a natural question is whether one can extend Wang's ordering given by Theorem \ref{t:order3-mfds} to $4$-manifolds that are finitely covered by $N\times S^1$, where $N$ is a $3$-manifold as in Theorem \ref{t:order3-mfds}. The non-trivial problem is to extend the non-existence results of Wang's ordering. A priori, it is not clear whether $M\ngeq N$ implies $M\times S^1 \ngeq N\times S^1$. More generally, a natural question is whether $M\ngeq N$ is stable under taking direct products, that is, whether $M\ngeq N$ implies $M\times W \ngeq N\times W$ for every manifold $W$. This question has its own independent interest, because, for example, our current understanding of the multiplicativity of functorial semi-norms (such as the simplicial volume) under taking products is not sufficient enough to provide answers to this kind of problems, even when a semi-norm remains non-zero under taking products.

The following result gives a sufficient condition for non-domination stability under taking direct products:

\begin{thm}[\cite{KotschickLoehNeofytidis,Neothesis}]\label{t:mapsbetweenproducts}
 Let $M$, $N$ be closed oriented $n$-dimensional manifolds such that $N$ is not dominated by products and $W$ be a closed oriented manifold of dimension $m$. Then, $M \geq N$ if and only if $M \times W \geq N \times W$.
\end{thm}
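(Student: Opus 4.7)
The forward direction is immediate: if $f\colon M\to N$ has non-zero degree, the product map $f\times\id_W\colon M\times W\to N\times W$ has the same non-zero degree. All the content lies in the converse.

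For the converse, suppose $F\colon M\times W\to N\times W$ has degree $d\neq 0$. My candidate non-zero degree map is the slice $f=\pi_N\circ F|_{M\times\{w_0\}}\colon M\to N$, which is well defined up to homotopy by connectedness of $W$; let $\delta=\deg f\in\Z$. The task is to prove $\delta\neq 0$.

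I would compute $\delta$ via the K\"unneth decomposition on rational top cohomology. Writing
\[
F^{*}(\pi_N^{*}\omega_N)=\sum_{j=0}^{n}\alpha_{n-j}\otimes\beta_j,\qquad F^{*}(\pi_W^{*}\omega_W)=\sum_{k=0}^{m}\gamma_{m-k}\otimes\delta_k,
\]
the slice identification gives $\alpha_n=\delta\,\omega_M$. Define scalars $c_j,e_j\in\Q$ by $\alpha_{n-j}\cup\gamma_j=c_j\omega_M$ and $\beta_j\cup\delta_{m-j}=e_j\omega_W$. Then the identity $F^{*}(\omega_N\cup\omega_W)=d\,\omega_M\cup\omega_W$ reads, up to Koszul signs, $d=\sum_{j=0}^{n}c_je_j$, with the ``pure'' $j=0$ piece equal to $\delta\gamma_0 e_0$ and the $j\geq 1$ pieces the ``mixed'' contributions.

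The heart of the argument is to rule out the mixed contributions using the hypothesis on $N$. For each $1\leq j\leq n-1$, realising the Poincar\'e duals of $\gamma_j$ in $M$ and of $\delta_{m-j}$ in $W$ as transversal closed oriented submanifolds $A\subset M$ of dimension $n-j$ and $B\subset W$ of dimension $j$, a direct cohomological computation shows that $\pi_N\circ F|_{A\times B}\colon A\times B\to N$ has degree $c_je_j$. Since $A\times B$ is a product of positive-dimensional manifolds with $\dim A+\dim B=n=\dim N$, a non-zero $c_je_j$ would exhibit $N$ as dominated by a product---contradicting the hypothesis. Hence $c_je_j=0$ for all $1\leq j\leq n-1$, and the boundary index $j=n$ is handled separately by observing that in that case $\phi=\pi_N\circ F$ factors cohomologically through $W$, so a composition-type argument extracts a non-zero degree map $M\to W\to N$ from the residual $M$-directional components of $\psi=\pi_W\circ F$. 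The main obstacle is the Poincar\'e duality realisation that converts the mixed K\"unneth terms into actual product dominations of $N$, together with the careful treatment of the $j=n$ boundary case.
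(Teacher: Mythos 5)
Your overall strategy is the right one and is exactly the one behind the cited proof: the forward direction is trivial, and the converse is attacked by K\"unneth-decomposing $F^*(\pi_N^*\omega_N)$ and $F^*(\pi_W^*\omega_W)$ over $\Q$, killing the mixed contributions via Thom's realization theorem together with the hypothesis that $N$ is not dominated by products, and reading off the degree of the slice map from the pure term. Two cosmetic points first: the K\"unneth components are in general sums of simple tensors, not single tensors $\alpha_{n-j}\otimes\beta_j$ (the argument survives, since for fixed realizing classes in $M$ and $W$ the relevant degree is the full sum over the tensor decomposition, and the hypothesis kills that sum); and Thom's theorem does not represent rational classes by embedded submanifolds --- it only produces, for some non-zero integer multiple of the class, a map from a closed oriented manifold (possibly disconnected) realizing it. Maps from manifolds, up to non-zero multiples and after passing to a component, are all you need, so this is fixable wording rather than a mathematical error.

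The genuine gap is the boundary case $j=n$, which occurs precisely when $\dim W\geq n$. There the would-be product $A\times B$ has $\dim A=0$, so it is not a non-trivial product and the hypothesis on $N$ gives nothing. Your proposed fix --- that one ``extracts a non-zero degree map $M\to W\to N$'' --- does not parse: there is no map $M\to W$ anywhere in the picture. What a non-vanishing $j=n$ term actually yields, after Poincar\'e duality and Thom realization, is (i) a map $q\colon W\to N$ with $q^*\omega_N=\beta\neq 0$, hence an $n$-manifold $B$ mapping to $W$ with $B\geq N$, and (ii) a non-zero degree map $M\times C\to W$, where $C$ is a closed oriented $(m-n)$-manifold realizing a multiple of the Poincar\'e dual of $\beta$ in $W$. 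Neither of these is a contradiction with ``$N$ is not dominated by products'' (since $B$ need not be a product), and neither directly produces a non-zero degree map $M\to N$. To finish one must run the whole K\"unneth--Thom argument a second time on the composite $M\times C\to W\xrightarrow{q}N$, where the mixed terms again die, the pure $H^n(M)$-term gives the desired map $M\to N$, and the pure $H^n(C)$-term can only survive if $\dim C=m-n\geq n$, i.e.\ $m\geq 2n$, forcing an induction on $\dim W$. None of this is in your proposal, so as written the converse is only proved when $\dim W<n$. (That special case does cover the application in this paper, where $W=S^1$ and $n=3$, since then $H^n(W;\Q)=0$ and the $j=n$ term is absent; but the theorem as stated is for arbitrary $W$.)
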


The proof of the above statement is based on the celebrated realization theorem of Thom \cite{Thom}; see~\cite{KotschickLoehNeofytidis,Neothesis} for details. In the same spirit, we obtain the following:

\begin{prop}[\cite{KotschickLoehNeofytidis,Neothesis}]\label{c:productslower}
 Let $M$, $W$ and $N$ be closed oriented manifolds of dimensions $m$, $k$ and $n$ respectively such that $m,k<n<m+k$. If $N$ is not dominated by
products, then $M\times W\geq N\times V$ for no closed oriented manifold $V$ of dimension $m+k-n$.
\end{prop}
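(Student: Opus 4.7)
The plan is to argue by contradiction, using Künneth and Thom's realization theorem to extract from a hypothetical nonzero degree map $f\colon M\times W \to N\times V$ a nonzero degree map from a genuine product to $N$, contradicting the hypothesis.

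First I would fix a class $\alpha \in H^{m+k-n}(V;\mathbb{Q})$ with $\langle\alpha,[V]\rangle=1$ (available by Poincaré duality on the closed oriented manifold $V$), and form
\[
\mu \;=\; f^{*}(1\times\alpha)\cap [M\times W] \;\in\; H_n(M\times W;\mathbb{Q}).
\]
Naturality of the cap product together with the Künneth identity $(1\times\alpha)\cap([N]\times[V])=[N]\times 1$ gives $f_{*}\mu = d([N]\otimes[\mathrm{pt}])$, where $d=\deg f\neq 0$; composing with the projection $p_N\colon N\times V\to N$ yields $(p_N\circ f)_{*}\mu = d[N]$.

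Next I would apply the Künneth formula to the cohomology class $f^{*}(1\times\alpha)\in H^{m+k-n}(M\times W;\mathbb{Q})$, writing it as $\sum_i a_i\otimes b_i$ with $a_i\in H^{p_i}(M;\mathbb{Q})$, $b_i\in H^{q_i}(W;\mathbb{Q})$, $p_i+q_i=m+k-n$. Capping with $[M\times W]=[M]\times[W]$ decomposes
\[
\mu \;=\; \sum_i \pm\, (a_i\cap[M])\times(b_i\cap[W]) \;=\; \sum_i \pm\,\alpha_i\times\beta_i,
\]
with $\alpha_i\in H_{m-p_i}(M;\mathbb{Q})$ and $\beta_i\in H_{k-q_i}(W;\mathbb{Q})$. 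Here the hypotheses $m,k<n<m+k$ enter decisively: they force $0\le p_i\le m+k-n<m$ and similarly for $q_i$, and in fact $m-p_i\ge n-k>0$ and $k-q_i\ge n-m>0$. Thus every summand $\alpha_i\times\beta_i$ sits in a Künneth piece $H_*(M)\otimes H_*(W)$ where \emph{both} degrees are strictly positive.

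Now I would invoke Thom's realization theorem to represent some nonzero integer multiple of each $\alpha_i$ (resp.\ $\beta_i$) by a map $A_i\to M$ (resp.\ $B_i\to W$) from a closed oriented manifold of positive dimension $m-p_i$ (resp.\ $k-q_i$). The product maps $A_i\times B_i\to M\times W$ then realize a common positive integer multiple of $\mu$. Composing with $p_N\circ f$ gives a map from the disjoint union $\bigsqcup_i A_i\times B_i$ to $N$ whose total degree equals a nonzero multiple of $d$. Since $H_n(N;\mathbb{Q})=\mathbb{Q}[N]$, at least one component $A_i\times B_i\to N$ must itself have nonzero degree, so $N$ is dominated by a product of positive-dimensional manifolds, contradicting the hypothesis.

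The only real obstacle is bookkeeping: ensuring that every Künneth summand that contributes to $\mu$ genuinely factors as a product of positive-dimensional classes. This is precisely where the double inequality $m,k<n<m+k$ is used, ruling out the degenerate cases $p_i=m$ or $q_i=k$ that would otherwise leave one factor zero-dimensional and obstruct the conclusion. Everything else (cap product naturality, Thom representability up to multiples, additivity of degree under disjoint unions) is standard.
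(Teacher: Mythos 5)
Your argument is correct and follows essentially the same route as the proof the paper cites from \cite{KotschickLoehNeofytidis,Neothesis}: cap with the pulled-back cohomological fundamental class of $V$, decompose the resulting class in $H_n(M\times W;\Q)$ via K\"unneth, realize the summands by Thom's theorem, and observe that the inequalities $m,k<n<m+k$ force every summand to be a product of two positive-dimensional classes, so $N$ would be dominated by a non-trivial product. The only (harmless) imprecision is in passing to a \emph{common} multiple of the Thom realizations, which requires taking several copies of each $A_i\times B_i$ in the disjoint union, as you implicitly do.
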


\subsection{Targets that are virtual products with a circle factor.}

Now we apply Theorem \ref{t:mapsbetweenproducts} to closed $4$-manifolds that are finitely covered by products of type $N \times S^1$. The main result
of this subsection extends the ordering of Theorem \ref{t:order3-mfds} as follows:

\begin{thm}\label{t:4Dwangorder}
Let $X$ be one of the three classes $\mathrm{(i)-(iii)}$ of Theorem \ref{t:order3-mfds}. We say that a closed $4$-manifold belongs to the class $X
\times \R$ if it is finitely covered by a product $N \times S^1$, where $N$ is a closed $3$-manifold in the class $X$.

If there exists an oriented path from the class $X$ to the class $Y$ in Figure \ref{f:order3-mfds}, then any closed $4$-manifold in the class $Y \times
\R$ is dominated by a manifold of the class $X \times \R$. If there is no oriented path from the class $X$ to the class $Y$, then no manifold in the
class $Y \times \R$ can be dominated by a manifold of the class $X \times \R$.
\end{thm}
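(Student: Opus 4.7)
The existence direction is a direct construction. If there is an oriented path $X \to Y$ in Figure \ref{f:order3-mfds}, then by Theorem \ref{t:order3-mfds} any $3$-manifold $N_Y$ in class $Y$ is dominated by some $N_X$ in class $X$, say by a map $g \colon N_X \to N_Y$. For any $M \in Y \times \R$ with finite covering $p \colon N_Y \times S^1 \to M$, the composition $p \circ (g \times \id_{S^1}) \colon N_X \times S^1 \to M$ has non-zero degree, and $N_X \times S^1$ lies in the class $X \times \R$.

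For the non-existence direction, suppose $f \colon M \to N$ is a map of non-zero degree with $M \in X \times \R$ and $N \in Y \times \R$, and let $\bar{N} = N_Y \times S^1 \to N$ be a finite covering with $N_Y$ in class $Y$. Pulling back this covering along $f$ produces a finite cover $M' \to M$ and a non-zero degree map $M' \to \bar{N}$. Since $M$ is virtually $N_X \times S^1$, so is $M'$; passing to a further common finite cover and exploiting that every finite index subgroup of $\pi_1(N_X) \times \Z$ contains a finite index product subgroup of the form $\pi_1(\tilde{N}_X) \times n\Z$, one may arrange the domain to be a genuine product $N_X' \times S^1$ with $N_X'$ still in class $X$. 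The problem is thus reduced to showing that a non-zero degree map $N_X' \times S^1 \to N_Y \times S^1$ forces a non-zero degree map $N_X' \to N_Y$ in dimension three, which then contradicts Wang's $3$-dimensional non-existence.

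When $N_Y$ is \emph{not} dominated by products, this reduction is immediate from Theorem \ref{t:mapsbetweenproducts} applied with $W = S^1$; by results of Kotschick–Löh and the discussion in Section \ref{s:4-geom}, this handles the target classes $\mathbb{H}^3$, $\mathrm{NGRAPH}$, $\mathrm{GRAPH}$, $Sol^3$, and $\#_p(S^2 \times S^1)$ with $p \geq 2$ (whose fundamental groups are not presentable by products). For the remaining targets—$\mathbb{H}^2 \times \R$, $\widetilde{SL_2}$, $Nil^3$, $\R^3$, and $\#_p(S^2 \times S^1)$ with $p\leq 1$—the $3$-manifold $N_Y$ is itself dominated by products, so Theorem \ref{t:mapsbetweenproducts} does not apply and one must argue by hand. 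I would adapt Wang's own $3$-dimensional arguments to the $4$-dimensional setting by studying the induced homomorphism on fundamental groups: using that $\pi_1(N_X' \times S^1)$ has central $\Z$ factor and that the source group is (virtually) solvable or Seifert, one shows that the $S^1$ factor in the source is absorbed by, or mapped into, the central $S^1$ direction of the target, so that the $4$-dimensional map descends, up to finite covers, to a map $N_X' \to N_Y$. The obstructions used in Section \ref{s:Wang} transfer: the first Betti number obstruction for $Sol^3 \times \R$ sources mapping to $\#_p(S^2 \times S^1) \times \R$ targets, the center-versus-free-group obstruction for $Nil^3 \times \R$ sources, and the cup-product obstruction from Lemma \ref{l:sharpgenus} for the $p \leq 1$ constraints.

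The main obstacle is precisely this last step: verifying that, for each pair $(X, Y)$ with $N_Y$ a Seifert manifold or a virtual connected sum of $S^2 \times S^1$'s, the induced map on fundamental groups cannot mix the $S^1$ factor of the source with the $3$-dimensional part of the target in a way that would avoid the $3$-dimensional obstruction. Because this must be done uniformly across a small number of target geometries, the argument is essentially a finite case analysis guided by the centers, abelianizations, and rational cohomology rings of the groups involved; once the descent to three dimensions is secured in each case, Wang's theorem completes the argument.
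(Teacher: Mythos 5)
Your existence argument and the core of your non-existence reduction coincide with the paper's: after pulling back coverings one reduces to a non-zero degree map $N_X'\times S^1\longrightarrow N_Y\times S^1$ and applies Theorem \ref{t:mapsbetweenproducts} whenever $N_Y$ is not dominated by products (the paper packages this as Proposition \ref{p:liftingtoproducts}). However, your case division rests on a classification of ``dominated by products'' that is backwards in both directions. By \cite[Theorem 4]{KotschickNeofytidis}, quoted in the paper, the \emph{only} closed aspherical $3$-manifolds dominated by products are the $\mathbb{H}^2\times\R$- and $\R^3$-manifolds; in particular $\widetilde{SL_2}$- and $Nil^3$-manifolds are \emph{not} dominated by products, so they belong to the easy case and need no hand argument. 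Conversely, $\#_p(S^2\times S^1)$ \emph{is} dominated by the product $\Sigma_p\times S^1$ for every $p$ (Section \ref{s:Wang} of the paper), so Theorem \ref{t:mapsbetweenproducts} cannot be invoked for these targets: ``$\pi_1$ not presentable by products'' implies ``not dominated by products'' only for rationally essential manifolds, and $\#_p(S^2\times S^1)$ is rationally inessential. The targets in $\#_{p\ge 2}(S^2\times S^1)\times\R$ therefore have to be excluded by hand (first Betti number for $Sol^3\times\R$- and $\R^4$-domains, the centre-versus-free-group argument for $Nil^3\times\R$-domains); you do list these tools, but under the wrong heading.

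The more serious gap is the step you yourself flag as the main obstacle. For the genuinely remaining targets ($\mathbb{H}^2\times\R^2$ and $\R^4$), your proposed mechanism --- showing that the source $S^1$ is ``absorbed into the central $S^1$ direction of the target'' so that the map descends to a $3$-dimensional map $N_X'\longrightarrow N_Y$ --- does not work: the target has no preferred circle splitting, the induced homomorphism need not respect the product structures, and the implication $M\times S^1\ge N\times S^1\Rightarrow M\ge N$ is exactly what can fail when $N$ \emph{is} dominated by products (this is the whole point of Theorem \ref{t:mapsbetweenproducts} and its hypothesis). The critical unresolved case is a $\widetilde{SL_2}\times\R$-domain against an $\mathbb{H}^2\times\R^2$- or $\R^4$-target, where first Betti numbers, centres and abelianizations give nothing. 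The paper closes it without any descent: every $\mathbb{H}^2\times\R^2$-manifold dominates $T^4$, so it suffices to show $M\times S^1\ngeq T^4$; the Seifert fibre of the $\widetilde{SL_2}$-factor $M$ is torsion in $H_1(M\times S^1)$, hence dies in $\pi_1(T^4)\cong\Z^4$, so the map factors up to homotopy through the $3$-manifold $\Sigma\times S^1$ and has degree zero. No $3$-dimensional map between the factors is ever produced, and your sketch as written does not supply a substitute for this argument.
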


{\noindent{\em Proof of existence.}} 
The existence part follows easily by the corresponding existence results for maps between $3$-manifolds given in Theorem \ref{t:order3-mfds}. Namely, let $Z$ be a closed $4$-manifold in the class $Y \times \R$ and suppose that there is an arrow from $X$ to $Y$ in Figure \ref{f:order3-mfds}. By definition, $Z$ is finitely covered by a product $N \times S^1$ for some closed $3$-manifold $N$ in the class $Y$. By Theorem \ref{t:order3-mfds}, there is a closed $3$-manifold $M$ in the class $X$ and a map of non-zero degree $f \colon M \longrightarrow N$. Then $f \times \mathrm{id}_{S^1} \colon M \times S^1 \longrightarrow N \times S^1$ has degree $\deg(f)$ and the product $M \times S^1$ belongs to the class $X \times \R$.

\medskip
{\noindent{\em Proof of non-existence.}}
We now prove the non-existence part of Theorem \ref{t:4Dwangorder}. Obviously, there is no $4$-manifold in the class $(\#_p S^2 \times S^1) \times \R$ that can dominate a manifold of the other classes. Thus, the interesting cases are when both the domain and the target are aspherical.

We first deal with targets whose $3$-manifold factor $N$ in their finite cover $N \times S^1$ is not dominated by products.

\begin{prop}\label{p:liftingtoproducts}
 Let $W$ and $Z$ be two closed oriented $4$-manifolds. Suppose that 
\begin{itemize}
 \item[\normalfont{(1)}] $W$ is dominated by products, and
 \item[\normalfont{(2)}] $Z$ is finitely covered by a product $N \times S^1$, where $N$ is a closed oriented $3$-manifold which is not dominated by products.
\end{itemize}
 If $W \geq Z$, then there exists a closed oriented $4$-manifold $M \times S^1$ so that $M \times S^1 \geq W$ and $M \geq N$. In particular, $M$ cannot be
dominated by products.
\end{prop}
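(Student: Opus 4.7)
The plan is to unpack the hypothesis that $W$ is dominated by products, transfer the resulting product domination to the target $Z$, lift it to the finite product cover $N \times S^1$, and then invoke Proposition \ref{c:productslower} and Theorem \ref{t:mapsbetweenproducts} to extract the desired 3-manifold factor $M$. The fact that $N$ is not dominated by products is the sole nontrivial hypothesis on the target, so everything must be routed through the two results that exploit exactly this property.

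Concretely, the first step is to choose closed oriented manifolds $A,B$ of positive dimension with $\dim A + \dim B = 4$ together with a map of non-zero degree $A \times B \longrightarrow W$, and to compose with the given $W \longrightarrow Z$ to obtain a non-zero degree map $A \times B \longrightarrow Z$. Pulling back the finite cover $N \times S^1 \longrightarrow Z$ along this composition yields a finite cover of $A \times B$ and a non-zero degree lift to $N \times S^1$. Since $\pi_1(A \times B) = \pi_1(A) \times \pi_1(B)$, intersecting the finite-index subgroup defining the covering with each factor produces, after passing to a further finite cover, a product cover of the form $A' \times B'$ with $A' \to A$ and $B' \to B$ finite covers, and hence a non-zero degree map $f \colon A' \times B' \longrightarrow N \times S^1$.

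Next I would run a dimension case analysis on $(\dim A, \dim B)$. The case $(2,2)$ is excluded by Proposition \ref{c:productslower}: both factors of $A' \times B'$ have dimension $2 < 3 = \dim N$ while $2+2 = 4 > \dim N$, and $S^1$ has dimension $2 + 2 - 3 = 1$, so the existence of $f$ would contradict $N$ being not dominated by products. Hence, after relabeling, $\dim A = 3$ and $\dim B = 1$, forcing $B \cong S^1$ and therefore $B' \cong S^1$. Now $f \colon A' \times S^1 \longrightarrow N \times S^1$ is a non-zero degree map between products, so Theorem \ref{t:mapsbetweenproducts}, applied with the hypothesis that $N$ is not dominated by products, upgrades this to a non-zero degree map $A' \longrightarrow N$.

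Setting $M := A'$ then finishes the argument: $M \geq N$ by the previous step, while $M \times S^1 = A' \times B'$ covers $A \times B$ and hence dominates $W$ via the chosen map $A \times B \longrightarrow W$. The final clause — that $M$ is not dominated by products — follows by contradiction, since composing any product domination of $M$ with $M \longrightarrow N$ would exhibit $N$ as dominated by products. The main potential obstacle is bookkeeping: ensuring the covering of $A \times B$ can indeed be refined to split as a product of covers (a standard subgroup intersection argument), and verifying the precise numerology that lets Proposition \ref{c:productslower} kill the $(2,2)$ case. Once those are in place, the rest is a direct application of Theorem \ref{t:mapsbetweenproducts}.
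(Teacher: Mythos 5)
Your proof is correct and follows essentially the same route as the paper's: both lift the relevant dominant map to the product cover $N\times S^1$, rule out the surface-times-surface case via Proposition \ref{c:productslower}, and extract $M\geq N$ from Theorem \ref{t:mapsbetweenproducts}. The only cosmetic difference is that you precompose with the product dominating $W$ before lifting (splitting the resulting cover of $A\times B$ as a product of covers), whereas the paper lifts $W\longrightarrow Z$ first and then invokes a product dominating the cover $\overline{W}$.
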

\begin{proof}
Assume that $f \colon W \longrightarrow Z$ is a map of non-zero degree and $p \colon N \times S^1 \longrightarrow Z$ is a finite covering of $Z$, where $N$ is a closed oriented $3$-manifold that is not dominated by products. The intersection
\[
 H := \mathrm{im} (\pi_1(p)) \cap \mathrm{im} (\pi_1(f)) 
\]
is a finite index subgroup of $\mathrm{im} (\pi_1(f))$ and its preimage $G := \pi_1(f)^{-1}(H)$ is a finite index subgroup of $\pi_1(W)$. Let $p'\colon \overline{W} \longrightarrow W$ be the finite covering of $W$ corresponding to $G$ and $\bar{f} \colon \overline{W} \longrightarrow N \times S^1$ be the lift of $f \circ p'$. 

By assumption, there is a non-trivial product $P$ and a dominant map $g \colon P \longrightarrow \overline{W}$. Thus, we obtain a non-zero degree map $\bar{f} \circ g \colon P \longrightarrow N \times S^1$. Now, since $P$ is a $4$-manifold, there exist two possibilities: Either $P = M\times S^1$, for a closed oriented $3$-manifold $M$ or $P = \Sigma_g \times \Sigma_h$, where $\Sigma_g$ and $\Sigma_h$ are closed oriented hyperbolic surfaces of genus $g$ and $h$ respectively. The latter possibility is excluded by Proposition \ref{c:productslower}, because $N$ is not dominated by products. Thus $P = M \times S^1$, and so we obtain a non-zero degree map $M \times S^1 \longrightarrow N \times S^1$. Then $M \geq N$ by Theorem \ref{t:mapsbetweenproducts}, again because $N$ is not dominated by products. Clearly, $M$ cannot be dominated by products.
\end{proof}

\begin{cor}
If $Y \neq \mathbb{H}^2 \times \R$ or $\R^3$, then the non-existence part of Theorem \ref{t:4Dwangorder} holds true for every aspherical target in a class $Y \times \R$.
 \end{cor}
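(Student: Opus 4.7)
The strategy is to reduce the four-dimensional domination to a three-dimensional one, where Wang's Theorem \ref{t:order3-mfds} applies. Suppose, aiming at the contrapositive, that some $W$ in $X\times\R$ dominates an aspherical $Z$ in $Y\times\R$ with $Y\neq\mathbb{H}^2\times\R,\R^3$; the goal is to produce an oriented path from $X$ to $Y$ in Figure \ref{f:order3-mfds}. By definition of the classes in Theorem \ref{t:4Dwangorder}, $W$ admits a finite cover $N_X\times S^1$ with $N_X$ in class $X$, and $Z$ admits a finite cover $N\times S^1$ with $N$ in class $Y$.

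The crucial preliminary is that every closed aspherical $3$-manifold $N$ in a class $Y\neq\mathbb{H}^2\times\R,\R^3$ fails to be dominated by products. For $Y=\mathbb{H}^3$ or $(\mathrm{NGRAPH})$ this follows from $\|N\|>0$ together with the vanishing of simplicial volume on $3$-manifolds with an $S^1$-factor. For $(\mathrm{GRAPH})$, $Sol^3$, $\widetilde{SL_2}$, and $Nil^3$ the simplicial volume vanishes, but the fundamental groups are not presentable by products, so the Kotschick--L\"oh criterion again rules out domination by products. This is exactly the hypothesis required for Proposition \ref{p:liftingtoproducts}.

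With this in hand, I would apply Proposition \ref{p:liftingtoproducts} to the pair $(W,Z)$, obtaining a closed oriented $3$-manifold $M$, not dominated by products, with $M\geq N$ and $M\times S^1\geq W$. The remaining step — the main content of the proof — is to identify the Wang class of $M$ with $X$. The idea is to choose the dominating product $P$ in the Proposition's argument to be a finite cover of $N_X\times S^1$ itself, which is legitimate because $N_X\times S^1$ already covers $W$ with non-zero degree. Any finite cover of $N_X\times S^1$ is in turn finitely covered by a product $N_X'\times S^1$, where $N_X'$ is a finite cover of $N_X$, by the standard fact that a finite-index subgroup of $\pi_1(N_X)\times\Z$ contains a finite-index product subgroup. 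This yields a non-zero degree map $N_X'\times S^1\longrightarrow N\times S^1$, and Theorem \ref{t:mapsbetweenproducts} then delivers $N_X'\geq N$. Since the aspherical classes of Theorem \ref{t:order3-mfds} are stable under finite covers, $N_X'$ lies in class $X$, and Wang's Theorem \ref{t:order3-mfds} supplies the desired oriented path from $X$ to $Y$.

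The principal obstacle is this identification step: Proposition \ref{p:liftingtoproducts} produces some abstract $M$, and without the observation that the dominating product can be taken to be a cover of $N_X\times S^1$, one has no way to relate the Wang class of $M$ to $X$. A secondary technical point is the preservation of Wang's classes under finite covers, which is standard for the geometric classes via uniqueness of the geometric structure and for $(\mathrm{GRAPH})$ and $(\mathrm{NGRAPH})$ via the behaviour of the JSJ decomposition.
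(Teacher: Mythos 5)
Your overall route coincides with the paper's: reduce to dimension three via Proposition \ref{p:liftingtoproducts} and then invoke the non-existence part of Theorem \ref{t:order3-mfds}. The paper is terse on exactly the point you single out as the principal obstacle --- relating the abstract $3$-manifold $M$ produced by Proposition \ref{p:liftingtoproducts} to the class $X$ --- and your resolution (run the proof of that proposition with the dominating product chosen to be a common finite cover of $N_X\times S^1$ and $\overline{W}$, which is again finitely covered by a product $N_X'\times S^1$, and then use that Wang's classes are closed under finite covers) is correct and usefully fills in what the paper leaves implicit.

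There is, however, a genuine error in your justification of the ``crucial preliminary''. The fundamental group of a closed $Nil^3$- or $\widetilde{SL_2}$-manifold has infinite center, and any group $\Gamma$ with infinite center \emph{is} presentable by products: the multiplication homomorphism $\Gamma\times C(\Gamma)\longrightarrow\Gamma$, $(g,z)\mapsto gz$, is surjective and both factors have infinite image. So the Kotschick--L\"oh criterion gives nothing for these two geometries; handling circle-bundle targets whose groups are presentable by products is precisely the point of~\cite{KotschickNeofytidis}. The fact you need --- that the only closed aspherical $3$-manifolds dominated by products are those carrying $\mathbb{H}^2\times\R$ or $\R^3$ --- is true, but it is~\cite[Theorem 4]{KotschickNeofytidis}, which the paper cites directly rather than deriving from the presentability obstruction. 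Your simplicial-volume argument for $\mathbb{H}^3$ and $(\mathrm{NGRAPH})$ is fine, as is the presentability argument for $Sol^3$ and non-trivial graph manifolds, whose groups have (virtually) trivial center; with the citation substituted for the faulty justification in the $Nil^3$ and $\widetilde{SL_2}$ cases, your proof goes through.
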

\begin{proof}
 By~\cite[Theorem 4]{KotschickNeofytidis}, the only closed aspherical $3$-manifolds that are dominated by products are those carrying one of the geometries $\mathbb{H}^2 \times \R$ or $\R^3$. The corollary now follows by Proposition \ref{p:liftingtoproducts} and the non-existence result in dimension three given by Theorem \ref{t:order3-mfds}.
\end{proof}

In terms of $4$-dimensional geometries of type $\mathbb{X}^3 \times \R$ we obtain the following straightforward consequence:

\begin{cor}
 Suppose that $W$ and $Z$ are closed oriented aspherical $4$-manifolds carrying product geometries $\mathbb{X}^3 \times \R$ and $\mathbb{Y}^3 \times \R$ respectively. Assume that $\mathbb{Y}^3$ is not $\mathbb{H}^2 \times \R$ or $\R^3$. If $W \geq Z$, then every closed $\mathbb{Y}^3$-manifold is dominated by a closed $\mathbb{X}^3$-manifold.
\end{cor}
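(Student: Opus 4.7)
The plan is to reduce the statement to a short chain of invocations of the results established earlier in the section, with no genuinely new ingredient. I would first use Theorem \ref{t:hillmanproducts} to pass from $W$ to a genuine product finite cover, then compose with the given dominant map to apply the preceding corollary, and finally invoke the existence half of Wang's Theorem \ref{t:order3-mfds}.

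More precisely, since $W$ carries the geometry $\mathbb{X}^3 \times \R$, Theorem \ref{t:hillmanproducts} furnishes a finite covering $p\colon M \times S^1 \longrightarrow W$ in which $M$ is a closed oriented $\mathbb{X}^3$-manifold. Composing $p$ with any nonzero-degree map $f\colon W \longrightarrow Z$ (which exists by the hypothesis $W \geq Z$) yields a map of nonzero degree $f \circ p \colon M \times S^1 \longrightarrow Z$. Thus $Z$ is dominated by a manifold $M \times S^1$ that belongs, by definition, to the class $\mathbb{X}^3 \times \R$ of Theorem \ref{t:4Dwangorder}.

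Since $Z$ is aspherical and lies in the class $\mathbb{Y}^3 \times \R$ with $\mathbb{Y}^3$ different from $\mathbb{H}^2 \times \R$ and $\R^3$, the preceding corollary applies verbatim: the non-existence portion of Theorem \ref{t:4Dwangorder} holds for this target. Consequently the dominant map $M \times S^1 \longrightarrow Z$ forces an oriented path from the class $\mathbb{X}^3$ to the class $\mathbb{Y}^3$ in Figure \ref{f:order3-mfds}. The existence half of Wang's Theorem \ref{t:order3-mfds} then yields, for every closed $\mathbb{Y}^3$-manifold $N$, a closed $\mathbb{X}^3$-manifold dominating it, which is the desired conclusion.

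There is no real obstacle in this argument; it is a pure bookkeeping exercise that matches geometric hypotheses to the classes appearing in Theorem \ref{t:4Dwangorder}. All substantive content has already been absorbed into Proposition \ref{p:liftingtoproducts} (which relies on $N$ being not presentable by products for $\mathbb{Y}^3 \notin \{\mathbb{H}^2 \times \R, \R^3\}$, via \cite[Theorem 4]{KotschickNeofytidis}) and into Wang's ordering itself, which explains the author's description of the statement as a straightforward consequence.
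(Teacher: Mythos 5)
Your argument is correct and is exactly the intended one: the paper states this corollary without proof as a ``straightforward consequence'' of the preceding corollary, and your chain (pass to the finite cover $M\times S^1$ of $W$ via Theorem \ref{t:hillmanproducts}, compose to get a dominant map onto $Z$, apply the contrapositive of the non-existence part of Theorem \ref{t:4Dwangorder} to produce an oriented path from $\mathbb{X}^3$ to $\mathbb{Y}^3$, then invoke the existence half of Theorem \ref{t:order3-mfds}) is precisely the bookkeeping the author leaves implicit. The only degenerate case not mentioned, $\mathbb{X}^3=\mathbb{Y}^3$, is handled trivially by the identity map, so there is no gap.
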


In order to complete the proof of Theorem \ref{t:4Dwangorder}, we need to show that closed manifolds which belong to the classes $\mathbb{H}^2 \times \R^2$ or $\R^4$
are not dominated by closed manifolds of the classes $Sol^3 \times \R$, $\widetilde{SL_2} \times \R$ or $Nil^3 \times \R$.

Since the first Betti numbers of closed $Sol^3 \times \R$-manifolds are at most two, and of closed $Nil^3 \times \R$-manifolds at most three, such manifolds cannot dominate closed manifolds possessing one of the geometries $\mathbb{H}^2 \times \R^2$ or $\R^4$.

Finally, we deal with the $\widetilde{SL_2} \times \R$ geometry:

\begin{lem}
 There is no closed oriented $\widetilde{SL_2} \times \R$-manifold that can dominate a closed oriented manifold possessing one of the geometries  $\mathbb{H}^2 \times \R^2$ or $\R^4$.
\end{lem}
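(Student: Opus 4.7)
The plan is to assume for contradiction that a closed oriented $\widetilde{SL_2}\times\R$-manifold $W$ admits a map of non-zero degree $f\colon W\to Z$, where $Z$ is a closed oriented manifold modeled on $\mathbb{H}^2\times\R^2$ or on $\R^4$, and to derive a contradiction from the rational cup-product ring of $W$. To set up a concrete model, I would first pass to finite covers by the same pull-back construction used in the proof of Proposition \ref{p:liftingtoproducts}. By Theorem \ref{t:hillmanproducts}, $Z$ is finitely covered by $\Sigma_h\times T^2$ (with $h\geq 2$) in the $\mathbb{H}^2\times\R^2$ case, or by $T^4$ in the $\R^4$ case; pulling this cover back along $f$ produces a finite cover $W'\to W$ and a non-zero degree map $\bar f\colon W'\to\widetilde Z$. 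Using the well-known fact that every closed $\widetilde{SL_2}$-manifold is virtually a non-trivial oriented $S^1$-bundle over a closed hyperbolic surface, I may further assume, after passing to another finite cover, that $W' = N\times S^1$, where $\pi\colon N\to\Sigma_g$ is a non-trivial oriented circle bundle with $g\geq 2$.

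The core of the argument is a cup-product obstruction. Since the Euler class $e\in H^2(\Sigma_g;\Q)$ of $\pi$ is non-zero, the Gysin sequence of $\pi$ shows that $\pi^*\colon H^1(\Sigma_g;\Q)\to H^1(N;\Q)$ is an isomorphism while $\pi^*\colon H^2(\Sigma_g;\Q)\to H^2(N;\Q)$ is identically zero. Consequently, any cup product of two classes in $H^1(N;\Q)$ vanishes in $H^2(N;\Q)$. By Künneth, $H^1(W';\Q)=H^1(N;\Q)\oplus\Q\omega$, where $\omega$ pulls back from the $S^1$-factor and satisfies $\omega\cup\omega=0$. A direct graded-commutative computation then shows that any product $\alpha_1\cup\alpha_2$ of elements of $H^1(W';\Q)$ already lies in $H^1(N;\Q)\cdot\omega$, so any further product $(\alpha_1\cup\alpha_2)\cup(\alpha_3\cup\alpha_4)$ contains a factor $\omega\cup\omega=0$. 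Hence every four-fold cup product of classes in $H^1(W';\Q)$ vanishes in $H^4(W';\Q)$.

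To conclude, I would compare with the target. In both $T^4$ and $\Sigma_h\times T^2$ there exist four classes in $H^1(\widetilde Z;\Q)$ whose cup product generates $H^4(\widetilde Z;\Q)$. Since $\bar f$ has non-zero degree, the induced map $\bar f^*$ on rational cohomology is injective (by the standard transfer argument) and is a ring homomorphism, so the image of such a four-fold product would be a non-zero four-fold cup product of $H^1$-classes in $H^4(W';\Q)$, contradicting the vanishing established above. The only slightly delicate point is ensuring that after all finite-cover reductions the Euler class of the bundle $N\to\Sigma_g$ is still non-zero; this is routine, since passing to a finite cover of $N$ only multiplies $e$ by a non-zero integer.
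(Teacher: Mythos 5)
Your proof is correct, but it runs along a genuinely different line from the paper's. The paper first reduces to the single target $T^4$ (observing that every closed $\mathbb{H}^2\times\R^2$-manifold dominates $T^4$, which finitely covers every flat $4$-manifold), and then argues via the fundamental group: the $S^1$-fibre of $M\times S^1\to\Sigma\times S^1$ is torsion in $H_1$, hence dies in $\pi_1(T^4)\cong\Z^4$, so the map factors through the $3$-dimensional aspherical space $\Sigma\times S^1$ and has degree zero. You instead keep both targets $T^4$ and $\Sigma_h\times T^2$ and use a cup-product obstruction: the non-vanishing of the rational Euler class forces $\pi^*=0$ on $H^2$ via the Gysin sequence, so all four-fold products of degree-one classes vanish in $H^4(N\times S^1;\Q)$, while the fundamental class of either target is such a product and $\bar f^*$ is injective. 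The two arguments exploit the same geometric input (the non-trivial Euler class of the Seifert fibration of a $\widetilde{SL_2}$-manifold), but yours is purely cohomological: it does not use asphericity of the target or the factorization machinery, it treats the two target geometries uniformly without the auxiliary domination $\mathbb{H}^2\times\R^2\geq T^4$, and it is close in spirit to the cup-product argument of Lemma \ref{l:sharpgenus}. The paper's version, on the other hand, fits its stated preference for uniform $\pi_1$-factorization arguments. One cosmetic point: the injectivity of $\bar f^*$ on rational cohomology for a map of non-zero degree is the standard Poincar\'e duality (umkehr map) argument rather than a transfer argument, which properly refers to covering maps; the fact itself is of course correct. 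All the finite-cover reductions you perform (pulling back the cover of the target, then replacing the domain by $N\times S^1$ with $N$ a non-trivial circle bundle over a hyperbolic surface) are legitimate and preserve non-vanishing of the degree and of the Euler class.
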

\begin{proof}
Every closed $\R^4$-manifold is finitely covered by $T^4$ and, therefore, is virtually dominated by every closed $\mathbb{H}^2 \times \R^2$-manifold. Thus, it suffices to show that $T^4$ cannot be dominated by a product $M \times S^1$, where $M$ is a closed  $\widetilde{SL_2}$-manifold. After passing to a finite cover, we can assume that $M$ is a non-trivial circle bundle over a hyperbolic surface $\Sigma$~\cite{Scott:3-mfds}.

Suppose that $f \colon M \times S^1 \longrightarrow T^4$ is a continuous map. The product $M \times S^1$ carries the structure of a non-trivial circle bundle over $\Sigma \times S^1$, by multiplying by $S^1$ both the total space $M$ and the base surface $\Sigma$ of the circle bundle $M \longrightarrow \Sigma$. The $S^1$-fiber of the circle bundle 
\[
S^1\longrightarrow M \times S^1\longrightarrow \Sigma \times S^1
\]
has finite order in $H_1(M\times S^1)$, being also the fiber of $M$. Therefore, its image under $H_1(f)$ has finite order in $H_1(T^4)$. Now, since $H_1(T^4)$ is isomorphic to $\pi_1(T^4) \cong \Z^4$, we deduce that $\pi_1(f)$ maps the fiber of the circle bundle $M \times S^1 \longrightarrow \Sigma \times S^1$ to the trivial element in $\pi_1(T^4)$. The latter implies that $f$ factors through the base $\Sigma \times S^1$, because the total space $M \times S^1$, the base $\Sigma \times S^1$ and the target $T^4$ are all aspherical. This finally means that the degree of $f$ must be zero, completing the proof.
\end{proof}

We have now finished the proof of Theorem \ref{t:4Dwangorder}.

\subsection{Virtual products of two hyperbolic surfaces.}

We close this section by examining manifolds that are finitely covered by a product of two closed hyperbolic surfaces, i.e. closed reducible $\mathbb{H}^2 \times \mathbb{H}^2$-manifolds.

\medskip
\noindent{\em Reducible $\mathbb{H}^2 \times \mathbb{H}^2$-manifolds as domains.} It is clear that every closed $4$-manifold with geometry modeled on $\mathbb{H}^2 \times \R^2$ or $\R^4$ is dominated by a product of two hyperbolic
surfaces. This implies moreover that every target in the class $\#_p (S^2 \times S^1) \times \R$ is dominated by such a product (by Theorem \ref{t:order3-mfds}). However, as we have seen in the proof of Proposition \ref{p:liftingtoproducts} (see also Proposition \ref{c:productslower}), closed aspherical $4$-manifolds that are virtual products $N \times S^1$, where $N$ does not belong to one of the classes $\mathbb{H}^2 \times \R$ or $\R^3$, cannot be dominated by reducible $\mathbb{H}^2\times \mathbb{H}^2$-manifolds.

\medskip
\noindent{\em Reducible $\mathbb{H}^2 \times \mathbb{H}^2$-manifolds as targets.} 
We claim that there is no manifold in the classes $X \times \R$ which can dominate a product of two closed hyperbolic surfaces. This is obvious when $X = \#_p (S^2 \times S^1)$. When $X$ is a class of aspherical $3$-manifolds, then the technique of factorizing dominant maps applies: The fundamental group of a product $M \times S^1$ has center at least infinite cyclic, whereas the center of the fundamental group of a product of two hyperbolic surfaces $\Sigma_g \times \Sigma_h$ is trivial. Therefore, every ($\pi_1$-surjective) map $f \colon M \times S^1 \longrightarrow \Sigma_g \times \Sigma_h$ kills the homotopy class of the $S^1$ factor of $ M \times S^1$, and so it factors through an aspherical manifold of dimension at most three, because both $M \times S^1$ and $\Sigma_g \times \Sigma_h$ are aspherical. This means that $H_4(f)([M \times S^1]) = 0 \in H_4(\Sigma_g \times \Sigma_h)$, implying that the degree of $f$ is zero.

\begin{rem}\label{r:othertech}
Since $\Sigma_g$ and $\Sigma_h$ are hyperbolic, the conclusion that $M \times S^1 \ngeq \Sigma_g \times \Sigma_h$ is straighforward, because $M \times S^1$ has vanishing simplicial volume, whereas the simplicial volume of $\Sigma_g \times \Sigma_h$ is positive. 
However, we prefer to give more elementary and uniform arguments for the proof of Theorem \ref{t:order4}, following simultaneously our methodology.
\end{rem}

\section{Ordering the non-hyperbolic geometries}\label{s:proofend}

In this section we finish the proof of Theorem \ref{t:order4}. 

The proof for the right-hand side of the diagram in Figure \ref{d:nonhypmaps}, concerning maps between geometric aspherical $4$-manifolds that are virtual products, was obtained in the previous section.

We now deal with the remaining geometries and complete the proof of Theorem \ref{t:order4}. The claim indicated in Figure \ref{d:nonhypmaps} is that
each of the geometries $Nil^4$, $Sol_0^4$, $Sol_{m \neq n}^4$, $Sol_1^4$ and the irreducible geometry $\mathbb{H}^2 \times \mathbb{H}^2$ is not comparable with any other
(non-hyperbolic) geometry under the domination relation.

\subsection{Non-product solvable geometries}

We begin by showing that there are no maps of non-zero degree between any two closed manifolds possessing a different geometry among $Nil^4$, $Sol_0^4$,
$Sol_{m \neq n}^4$ and $Sol_1^4$.

First, we show that there are no maps of non-zero degree between closed $Nil^4$-manifolds and $Sol_1^4$-manifolds. We need the following lemma:

\begin{lem}\label{l:factorizationbundles}
 For $i = 1,2$ let $M_i \stackrel{p_i}\longrightarrow B_i$ be circle bundles over closed oriented aspherical manifolds $B_i$ of the same dimension, so that the
center of each $\pi_1(M_i)$ remains infinite cyclic in finite covers. If $B_1 \ngeq B_2$, then $M_1 \ngeq M_2$. 
\end{lem}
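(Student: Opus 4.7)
I would argue by contradiction. Suppose $f\colon M_1\longrightarrow M_2$ is a map of non-zero degree. Passing to the finite cover $\overline{M_2}\longrightarrow M_2$ corresponding to $\im(\pi_1(f))$, I lift $f$ to a $\pi_1$-surjective map $\bar f\colon M_1\longrightarrow\overline{M_2}$ of non-zero degree. The fiber subgroup of $\pi_1(M_2)$ is central and meets $\pi_1(\overline{M_2})$ in a non-trivial central infinite cyclic subgroup, while the hypothesis ensures $C(\pi_1(\overline{M_2}))\cong\Z$; hence these two subgroups coincide and $\overline{M_2}$ is itself a circle bundle over a finite cover $\overline{B_2}\longrightarrow B_2$, whose fiber subgroup $\langle\bar t_2\rangle$ equals $C(\pi_1(\overline{M_2}))$.

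Let $\langle t_1\rangle=C(\pi_1(M_1))$ be the fiber subgroup of $M_1$. Since $t_1$ is central and $\bar f$ is $\pi_1$-surjective, $\pi_1(\bar f)(t_1)\in C(\pi_1(\overline{M_2}))=\langle\bar t_2\rangle$, so $\pi_1(\bar f)(t_1)=\bar t_2^{\,k}$ for some $k\in\Z$. If $k=0$, then $\pi_1(\bar f)$ factors through $\pi_1(M_1)/\langle t_1\rangle=\pi_1(B_1)$; by asphericity of $\overline{M_2}$ the map $\bar f$ then factors up to homotopy as $M_1\longrightarrow B_1\longrightarrow\overline{M_2}$, and since $\dim B_1<\dim\overline{M_2}$ this forces $\deg(\bar f)=0$, a contradiction. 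Hence $k\neq 0$, and $\pi_1(\bar f)$ descends to a homomorphism $\pi_1(B_1)\longrightarrow\pi_1(\overline{B_2})$, realized by asphericity as a continuous map $g\colon B_1\longrightarrow\overline{B_2}$ making the diagram
\[
\xymatrix{
M_1 \ar[r]^-{\bar f} \ar[d]_-{p_1} & \overline{M_2} \ar[d]^-{\bar p_2} \\
B_1 \ar[r]_-{g} & \overline{B_2}
}
\]
commute up to homotopy.

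The decisive step is then to show that $\deg(g)\neq 0$. I would homotope $\bar f$ to be fiber-preserving over $g$ (possible because the $\pi_1$-data match and all four spaces are aspherical), so that $\bar f$ restricts on each fiber to a map $S^1\longrightarrow S^1$ of degree $k$. The classical multiplicativity of degree for fiber-bundle maps then gives $\deg(\bar f)=\pm k\cdot\deg(g)$, whence $\deg(g)\neq 0$; composing $g$ with the covering $\overline{B_2}\longrightarrow B_2$ produces a non-zero degree map $B_1\longrightarrow B_2$, contradicting $B_1\not\geq B_2$. The main technical point is making $\bar f$ fiber-preserving and verifying this degree formula, which can be done either by an explicit homotopy along fibers or, more abstractly, by a Gysin-type computation in top-degree cohomology.
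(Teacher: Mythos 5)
Your proposal is correct and takes essentially the same route as the paper: pass to finite covers to make the map $\pi_1$-surjective, use centrality of the fiber classes (plus torsion-freeness of $\pi_1(B_2)$) to factor the composition with the bundle projection through $B_1$, obtain $g\colon B_1\longrightarrow B_2$ up to covers, and conclude via multiplicativity of the degree for the resulting bundle map that $\deg(f)$ is a multiple of $\deg(g)=0$. The paper packages your ``fiber-preserving homotopy'' step slightly more cleanly by observing that $f$ factors through the pullback of $p_2$ under $g$, but the content is identical.
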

\begin{proof}
 Suppose that $f \colon M_1 \longrightarrow M_2$ is a map of non-zero degree. After passing to finite coverings, if necessary, we may assume that $f$ is
$\pi_1$-surjective and that the center of each $\pi_1(M_i)$ is infinite cyclic. 
Let $p_2\circ f\colon M_1\longrightarrow B_2$. The induced homomorphism $\pi_1(p_2 \circ f)$ maps the infinite cyclic group generated by
the circle fiber of $M_1$ trivially in $\pi_1(B_2)$. This implies that $p_2 \circ f$ factors through the bundle projection $p_1 \colon M_1 \longrightarrow
B_1$ (recall that $B_2$ is aspherical). In particular, there is a continuous map $g \colon B_1 \longrightarrow B_2$, so that $p_2 \circ f = g \circ p_1$ (in homotopy).
Now $f$ factors through the pullback of $M_2\stackrel{p_2}\longrightarrow B_2$ under $g$, which means that the degree of $f$ is a multiple of $\deg (g)$. However, the degree of $g$ is zero by
our hypothesis that $B_1 \ngeq B_2$, and so $\deg(f)=0$. This contradiction finishes the proof.
\end{proof}

Closed $Nil^4$-manifolds and $Sol_1^4$-manifolds are virtual circle bundles over $Nil^3$-manifolds and $Sol^3$-manifolds respectively, and the center of their fundamental groups remains infinite cyclic in finite covers; cf. Prop. \ref{p:nil4} and \ref{p:sol1} respectively. Since there are no maps of non-zero degree between closed $Sol^3$-manifolds and $Nil^3$-manifolds (cf. Theorem \ref{t:order3-mfds}), Lemma \ref{l:factorizationbundles} implies the following:

\begin{prop}
 Closed oriented $Nil^4$-manifolds are not comparable under $\geq$ with closed oriented $Sol_1^4$-manifolds. 
\end{prop}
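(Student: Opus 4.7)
The plan is to apply Lemma \ref{l:factorizationbundles} in both directions, exploiting the descriptions of closed $Nil^4$- and $Sol_1^4$-manifolds as virtual circle bundles over closed aspherical $3$-manifolds (of geometry $Nil^3$ and $Sol^3$ respectively) together with the non-comparability of $Nil^3$- and $Sol^3$-manifolds under the domination relation, which is part of Theorem \ref{t:order3-mfds}.

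First, I would suppose for contradiction that $f\colon M\longrightarrow N$ is a map of non-zero degree between a closed oriented $Nil^4$-manifold and a closed oriented $Sol_1^4$-manifold. By Propositions \ref{p:nil4} and \ref{p:sol1}, $M$ and $N$ are virtually circle bundles over $Nil^3$- and $Sol^3$-manifolds respectively. Standard covering-pullback gives finite covers $\widetilde{M}\longrightarrow M$ and $\widetilde{N}\longrightarrow N$ and a non-zero degree lift $\widetilde{f}\colon\widetilde{M}\longrightarrow\widetilde{N}$ such that $\widetilde{M}\longrightarrow B_1$ and $\widetilde{N}\longrightarrow B_2$ are honest circle bundles over closed oriented aspherical $3$-manifolds $B_1$ (a $Nil^3$-manifold) and $B_2$ (a $Sol^3$-manifold).

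Next, I would verify the remaining hypothesis of Lemma \ref{l:factorizationbundles}: that the centers of $\pi_1(\widetilde{M})$ and $\pi_1(\widetilde{N})$ are infinite cyclic and remain so under further finite covers. For $\widetilde{M}$ this is exactly the content of Proposition \ref{p:nil4}. For $\widetilde{N}$ the point is that $Sol_1^4=Nil^3\rtimes\R$ where the $\R$-action preserves the center of $Nil^3$, so the circle fiber (coming from this center) sits in the center of $\pi_1(\widetilde{N})$; that this center is exactly infinite cyclic and stable under finite covers follows from the Lie-theoretic description, since the ambient Lie group $Sol_1^4$ has $1$-dimensional center.

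With these inputs, Lemma \ref{l:factorizationbundles} yields that $B_1\ngeq B_2$ forces $\widetilde{M}\ngeq\widetilde{N}$. Since Theorem \ref{t:order3-mfds} tells us no closed $Nil^3$-manifold dominates a closed $Sol^3$-manifold, the lemma provides the desired contradiction. The argument in the reverse direction is entirely symmetric, using instead that no closed $Sol^3$-manifold dominates a closed $Nil^3$-manifold. The only point requiring a little care is the center-stability hypothesis for $Sol_1^4$-manifolds, which however falls out immediately from the structure of the model Lie group; after that, everything is a formal consequence of already-established $3$-dimensional non-domination and the bundle-factorization lemma.
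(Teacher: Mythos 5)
Your proposal is correct and follows essentially the same route as the paper: the paper likewise combines Propositions \ref{p:nil4} and \ref{p:sol1} (virtual circle bundle structure over $Nil^3$- and $Sol^3$-manifolds with stable infinite cyclic center), the non-domination between $Nil^3$- and $Sol^3$-manifolds from Theorem \ref{t:order3-mfds}, and Lemma \ref{l:factorizationbundles} applied in both directions. Your extra care about the center-stability for $Sol_1^4$ is a reasonable way to supply what the paper attributes to the cited version of Proposition \ref{p:sol1}.
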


Next, we show that there are no dominant maps between closed $Sol_0^4$-manifolds and $Sol_{m \neq n}^4$-manifolds. Recall that a closed manifold modeled on $Sol_0^4$ or $Sol_{m \neq n}^4$ is a mapping torus of $T^3$ (Theorem \ref{t:mappingtorisolvable} (1)). Moreover, the eigenvalues of the automorphism
of $\Z^3$ induced by the monodromy of $T^3$ are not roots of unity; cf.~\cite[pg. 164/165]{Hillman}. 

In the following proposition we show that every non-zero degree map between such mapping tori is $\pi_1$-injective: 

\begin{prop}\label{p:injectivemappingtoriTn}
 Let $M$ and $N$ be closed manifolds that are finitely covered by mapping tori of self-homeomorphisms of $T^n$ so that no eigenvalue of the induced
automorphisms of $\Z^n$ is a root of unity. If $f \colon M \longrightarrow N$ is a non-zero degree map, then $f$ is $\pi_1$-injective.
\end{prop}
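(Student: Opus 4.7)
The plan is to pass to a $\pi_1$-surjective lift of $f$ between finite covers that are themselves honest mapping tori of $T^n$, invoke the additivity of the Hirsch length on polycyclic groups to force the kernel of the induced $\pi_1$-map to be trivial, and finally transfer the conclusion back to the original $f$ using torsion-freeness of aspherical fundamental groups. The root-of-unity hypothesis does not actually intervene in the injectivity argument itself; it will presumably be needed in subsequent applications (e.g.\ to distinguish $Sol_0^4$- and $Sol_{m\neq n}^4$-manifolds from flat or $Nil$-type examples).

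First I would reduce to the mapping-torus case. By assumption there are finite covers $\tilde M\to M$ and $\tilde N\to N$ with $\tilde M,\tilde N$ mapping tori of $T^n$; any finite cover of such a mapping torus is again of the same form, since a finite-index subgroup of $\mathbb{Z}^n\rtimes\mathbb{Z}$ still projects onto a subgroup of $\mathbb{Z}$ with kernel $\mathbb{Z}^n$, and this extension splits. Composing $f$ with $\tilde M\to M$, lifting through $\tilde N\to N$ on a suitable finite cover $\tilde M'\to\tilde M$, and then passing to the finite cover $\tilde N'\to\tilde N$ determined by the image of the induced fundamental-group homomorphism, I obtain a non-zero degree, $\pi_1$-surjective map $\hat f\colon\tilde M'\to\tilde N'$ between closed aspherical mapping tori of $T^n$.

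Next I would run the Hirsch-length argument. Both $\pi_1(\tilde M')$ and $\pi_1(\tilde N')$ are polycyclic with Hirsch length $n+1$, equal to the dimension of the underlying aspherical manifolds. Since the Hirsch length is additive in short exact sequences of polycyclic groups, the kernel $K$ of $\pi_1(\hat f)$ has Hirsch length $0$ and is therefore finite; asphericity forces $\pi_1(\tilde M')$ to be torsion-free, so $K=\{1\}$, i.e.\ $\pi_1(\hat f)$ is injective.

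To descend to $f$, I would observe that the restriction of $\pi_1(f)$ to the finite-index subgroup $\pi_1(\tilde M')\subset\pi_1(M)$ agrees, up to the injection $\pi_1(\tilde N')\hookrightarrow\pi_1(N)$, with $\pi_1(\hat f)$, and is therefore injective. Consequently $\ker\pi_1(f)\cap\pi_1(\tilde M')=\{1\}$, which forces the natural map $\ker\pi_1(f)\to\pi_1(M)/\pi_1(\tilde M')$ on cosets to be injective and hence $\ker\pi_1(f)$ to be finite; since $\pi_1(M)$ is torsion-free, this kernel must be trivial. The main obstacle I anticipate is the bookkeeping in the first step: $f$ need neither be $\pi_1$-surjective nor lift literally to the mapping-torus covers, so one must stack two layers of finite covers---one to secure a lift, the other to force $\pi_1$-surjectivity---while keeping the mapping-torus structure intact.
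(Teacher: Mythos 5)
Your proof is correct, but it takes a genuinely different route from the paper's. The paper argues via explicit presentations: writing $\pi_1(M)=\Z^n\rtimes_{\theta_M}\langle t\rangle$, it shows that the finite-index image $f_*(\pi_1(M))\subset\pi_1(N)$ inherits the \emph{same} presentation (here is where torsion-freeness and the root-of-unity hypothesis are invoked, to rule out extra relations among the images of the generators), then uses Hopficity of $\Z^n$ to get injectivity on the fiber subgroup and a separate argument for the $\langle t\rangle$-direction. Your argument replaces all of this with the additivity of the Hirsch length in short exact sequences of (virtually) polycyclic groups: both fundamental groups have Hirsch length $n+1$, the image is finite index by non-zero degree, so the kernel has Hirsch length $0$, is finite, and hence trivial by torsion-freeness of aspherical fundamental groups. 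This is valid, and your observation that the root-of-unity hypothesis is not needed is correct --- your argument proves the stronger statement for arbitrary monodromies, and indeed suffices for the paper's application (Corollary \ref{c:nonmapsbetweensol} only needs $\pi_1$-injectivity plus Theorem \ref{t:Wall4Dgeometries}). One remark on efficiency: your two-layer covering reduction is unnecessary. Hirsch length is a commensurability invariant, so $\pi_1(M)$ and $\pi_1(N)$ are already virtually polycyclic of Hirsch length $n+1$, the image of $\pi_1(f)$ is a finite-index subgroup of $\pi_1(N)$ of the same Hirsch length, and the short exact sequence $1\to\ker\pi_1(f)\to\pi_1(M)\to\operatorname{im}\pi_1(f)\to 1$ gives the conclusion directly, with no lifting or descent needed. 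What the paper's more hands-on approach buys is self-containedness (no appeal to the theory of polycyclic groups) and the extra structural information that the image is itself a semidirect product $\Z^n\rtimes\Z$ with the same monodromy matrix; what yours buys is brevity, rigor at the step where the paper is somewhat informal (``we conclude that there are no other relations''), and greater generality.
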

\begin{proof}
 Since we want to show that $f \colon M \longrightarrow N$ is $\pi_1$-injective, we may write 
 \[
 \pi_1(M) = \pi_1(T^n) \rtimes_{\theta_M} \langle t \rangle,
 \]
where $\pi_1(T^n) = \Z^n = \langle x_1,...,x_n \vert \ [x_i,x_j] = 1 \rangle$ and the automorphism $\theta_M \colon \Z^n
\longrightarrow \Z^n$ is induced by the action of $\langle t \rangle$ on $\Z^n$, given by
\[
tx_it^{-1} = x_1^{k_{1i}} \cdots x_n^{k_{ni}}, \ \mathrm{for \ all} \ i = 1,..., n.
\]
(That is, the matrix of the automorphism $\theta_M$ is given by $(k_{ij}), \ i,j \in \{1,...,n\}$.)
We observe that
$tx_it^{-1} \neq x_j$, for all
$i,j \in \{1,...,n\}$, because no eigenvalue of $\theta_M$ is a root of unity. 

The image $f_*(\pi_1(M))$ of the induced homomorphism $f_* \colon \pi_1(M) \longrightarrow \pi_1(N)$ is a finite index
subgroup of $\pi_1(N)$, 
generated by $f_*(x_1),...,f_*(x_n), f_*(t)$. Also, the relations $[x_i,x_j] = 1$ and $tx_it^{-1} = x_1^{k_{1i}} \cdots x_n^{k_{ni}}$ in $\pi_1(M)$ give
the corresponding relations $[f_*(x_i),f_*(x_j)] = 1$ and $f_*(t)f_*(x_i)f_*(t)^{-1} = f_*(x_1)^{k_{1i}} \cdots f_*(x_n)^{k_{ni}}$ in $f_*(\pi_1(M))$. 

Since $\pi_1(N)$ (and therefore $f_*(\pi_1(M))$) is torsion-free and (virtually) a semi-direct product $\Z^n \rtimes \Z$, where the eigenvalues of the
induced automorphism of $\Z^n$ are
not roots of unity, we conclude that there no other relations between the generators $f_*(x_1),...,f_*(x_n),f_*(t)$ and that $f_*(t)f_*(x_i)f_*(t)^{-1} \neq
f_*(x_j)$,
for all $i,j \in \{1,...,n\}$. Therefore, $f_*(\pi_1(M))$ has a presentation
\begin{eqnarray*}
 f_*(\pi_1(M))  & = & \langle f_*(x_1),...,f_*(x_n),f_*(t) \vert \ [f_*(x_i),f_*(x_j)] = 1,\\
                &   & \ f_*(t)f_*(x_i)f_*(t)^{-1} = f_*(x_1)^{k_{1i}} \cdots f_*(x_n)^{k_{ni}} \rangle\\
                & = & \langle f_*(x_1),...,f_*(x_n) \rangle \rtimes \langle f_*(t) \rangle.
\end{eqnarray*}
In particular, $f_*\vert_{\pi_1(T^n)}$ surjects onto $\pi_1(T^n) \cong \langle f_*(x_1),...,f_*(x_n) \rangle \subset f_*(\pi_1(M))$. Since $\Z^n$ is
Hopfian, we deduce that $f_*\vert_{\pi_1(T^n)}$ is injective. 

Finally, we observe that $f_*(t^k) \notin \langle f_*(x_1),...,f_*(x_n) \rangle$ for all non-zero integers $k$, otherwise the finite index subgroup 
$\langle f_*(x_1),...,f_*(x_n) \rangle \rtimes \langle f_*(t^k) \rangle \subset \pi_1(N)$ would be isomorphic to $\Z^n$, which is impossible. This
completes the proof. 
\end{proof}

Since the $4$-dimensional geometries are homotopically unique (cf. Theorem \ref{t:Wall4Dgeometries}), we deduce the following:

\begin{cor}\label{c:nonmapsbetweensol}
 Any two closed oriented manifolds $M$ and $N$ possessing the geometries $Sol_{m \neq n}^4$ and $Sol_0^4$ respectively are not comparable under
$\geq$.
\end{cor}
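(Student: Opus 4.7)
\medskip
\noindent\emph{Proof proposal for Corollary \ref{c:nonmapsbetweensol}.}

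The plan is to argue by contradiction, showing that any non-zero degree map between such $M$ and $N$ would force them to carry the same $4$-dimensional geometry, in contradiction to the fact that $Sol_{m\neq n}^4\neq Sol_0^4$ and that the geometries are homotopically unique. By symmetry (both geometries satisfy the hypotheses below), it suffices to rule out a single map of non-zero degree $f\colon M\longrightarrow N$, and the argument will apply regardless of the direction.

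Suppose such an $f$ exists. By Theorem \ref{t:mappingtorisolvable}(1), both $M$ and $N$ are (virtually) mapping tori of self-homeomorphisms of $T^3$, and as recorded in the paragraph preceding Proposition \ref{p:injectivemappingtoriTn}, none of the eigenvalues of the induced automorphisms of $\Z^3$ is a root of unity. Hence Proposition \ref{p:injectivemappingtoriTn} applies and yields that $\pi_1(f)\colon\pi_1(M)\longrightarrow\pi_1(N)$ is injective. Since $f$ has non-zero degree, the image $\pi_1(f)(\pi_1(M))$ has finite index in $\pi_1(N)$. In particular, $\pi_1(M)$ is isomorphic to a finite-index subgroup $\Gamma\leq\pi_1(N)$.

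Next I would invoke asphericity. Since $N$ is aspherical, the subgroup $\Gamma\leq\pi_1(N)$ is realized by a finite covering $\widetilde{N}\longrightarrow N$, and $\widetilde{N}$ is again aspherical with $\pi_1(\widetilde{N})\cong\pi_1(M)$. Because aspherical spaces are $K(\pi,1)$'s, $M$ and $\widetilde{N}$ are homotopy equivalent. Being a finite covering of a closed $Sol_0^4$-manifold, $\widetilde{N}$ itself carries the geometry $Sol_0^4$ (geometries persist under finite covers, as recorded after Theorem \ref{t:Wall4Dgeometries}). Thus $M$, which carries the geometry $Sol_{m\neq n}^4$, is homotopy equivalent to a closed $Sol_0^4$-manifold. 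Theorem \ref{t:Wall4Dgeometries} then forces $Sol_{m\neq n}^4=Sol_0^4$, which is absurd. Reversing the roles of $M$ and $N$ yields the symmetric conclusion, so neither direction of domination can occur.

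The main obstacle in this plan is already settled by the earlier result: obtaining $\pi_1$-injectivity from a mere non-zero degree hypothesis is the nontrivial step, and it is exactly what Proposition \ref{p:injectivemappingtoriTn} provides. Once injectivity is in hand, the remainder reduces to the standard passage from finite-index subgroup inclusions to finite coverings of aspherical targets, combined with the homotopy uniqueness of $4$-dimensional Thurston geometries.
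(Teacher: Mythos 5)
Your argument is correct and is exactly the one the paper intends: the paper derives the corollary from Proposition \ref{p:injectivemappingtoriTn} together with Theorem \ref{t:Wall4Dgeometries} in one sentence, and you have simply spelled out the implicit steps (finite-index image of $\pi_1(f)$, passage to the corresponding finite cover $\widetilde{N}$, homotopy equivalence of aspherical manifolds with isomorphic fundamental groups, and persistence of the geometry under finite covers). No gaps.
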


Now, we show that closed $Sol_1^4$-manifolds are not comparable with closed manifolds possessing one of the geometries $Sol_{m \neq n}^4$ or $Sol_0^4$.

\begin{prop}
 Closed oriented manifolds possessing the geometry $Sol_1^4$ are not dominated by closed oriented $Sol_{m \neq n}^4$- or $Sol_0^4$-manifolds. Conversely,
closed oriented $Sol_1^4$-manifolds cannot dominate closed oriented manifolds with geometries modeled on $Sol_{m \neq n}^4$ or $Sol_0^4$.
\end{prop}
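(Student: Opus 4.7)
The two non-domination assertions need rather different invariants of $\pi_1$. In both cases I would assume a non-zero degree map $f\colon M\longrightarrow N$ and reduce to the situation in which $\pi_1(f)$ is surjective by passing to the cover of $N$ corresponding to the (necessarily finite index) image of $\pi_1(f)$; this cover carries the same geometry by Theorem~\ref{t:Wall4Dgeometries}, so the structural results of Section~\ref{s:4-geom} continue to apply.

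\emph{Direction $Sol_1^4\longrightarrow Sol_0^4$ or $Sol_{m\neq n}^4$.} Here I would use the center. By Proposition~\ref{p:sol1}, after a further finite cover $M$ is a non-trivial $S^1$-bundle over a $Sol^3$-manifold $B$, so the fiber class generates an infinite cyclic central subgroup of $\pi_1(M)$ (this uses that the $\R$-action defining $Sol_1^4$ fixes the center of $Nil^3$, as visible from the matrix description in Section~\ref{s:4-geom}). On the other hand, $\pi_1(N)=\Z^3\rtimes_\phi \Z$ with $\phi$ having no root of unity eigenvalue (Theorem~\ref{t:mappingtorisolvable}(1) and the discussion before Proposition~\ref{p:injectivemappingtoriTn}), and a short computation shows that its center is trivial: if $(v,n)$ is central, commuting with $\Z^3$ forces $\phi^n=\id$ hence $n=0$, and then commuting with the stable letter forces $\phi(v)=v$ hence $v=0$. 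Since $\pi_1(f)$ is surjective, it sends the central $\Z\subseteq\pi_1(M)$ into the trivial subgroup, so $\pi_1(f)$ factors through $\pi_1(M)/Z(\pi_1(M))\cong\pi_1(B)$. By asphericity of $N$, $f$ is homotopic to a map factoring through the $3$-dimensional $B$, forcing $\deg(f)=0$, a contradiction.

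\emph{Direction $Sol_0^4$ or $Sol_{m\neq n}^4\longrightarrow Sol_1^4$.} Centers no longer separate the two sides (a surjection can enlarge the center), so I would instead compare derived lengths. The source $\pi_1(M)=\Z^3\rtimes_\phi \Z$ is metabelian, since its commutator subgroup lies in the abelian $\Z^3$; its derived length is therefore $2$. For the target, after passing to a finite cover Theorem~\ref{t:mappingtorisolvable}(2) writes $\pi_1(N)=G\rtimes \Z$ with $G$ a lattice in $Nil^3$. I would then verify that $[\pi_1(N),\pi_1(N)]\subseteq G$ surjects, modulo the center of $G$, onto a finite-index subgroup of $G/Z(G)\cong\Z^2$ (because the monodromy acts hyperbolically on $\Z^2$, so the relevant $\phi-I$ is invertible over $\Q$), and is hence itself a $2$-step nilpotent lattice in $Nil^3$; its own commutator subgroup is then a non-trivial subgroup of $Z(G)$, giving derived length exactly $3$. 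Since $\pi_1(f)$ surjective makes derived length non-increasing along $\pi_1(f)$, this yields $3\leq 2$, a contradiction.

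\emph{Main obstacle.} The delicate points are the two structural claims about $\pi_1(N)$: triviality of the center in the $Sol_0^4/Sol_{m\neq n}^4$ case and non-vanishing of the second derived subgroup in the $Sol_1^4$ case. Both reduce to the statement that the relevant monodromy has no root of unity eigenvalue, so that $\phi-I$ is invertible rationally on the abelian quotient; one then has to check that these properties persist in the finite covers used for the reduction, which is guaranteed by Theorem~\ref{t:Wall4Dgeometries} together with Theorem~\ref{t:mappingtorisolvable}.
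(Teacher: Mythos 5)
Your proposal is correct, and it is worth comparing the two halves separately. For the direction where the $Sol_1^4$-manifold is the domain, your argument is essentially the paper's: both use that the circle fibre gives an infinite cyclic centre in $\pi_1$ of the $Sol_1^4$-manifold which must die under a $\pi_1$-surjection onto a group with trivial centre, forcing the map to factor through the $3$-dimensional base and hence have degree zero; the only difference is that you verify triviality of the centre of $\Z^3\rtimes_\phi\Z$ (and of its finite-index subgroups) by direct computation from the no-root-of-unity hypothesis, whereas the paper invokes ``$\pi_1$ not presentable by products'' via Proposition~\ref{p:sol&sol}. For the other direction your route is genuinely different: the paper pushes the presentation $\langle x_1,x_2,x_3\rangle\rtimes\langle t\rangle$ forward along $f_*$ and concludes that the finite-index image in $\pi_1(Z)$ would be abelian-by-cyclic, contradicting the fact (quoted from Hillman) that a $Sol_1^4$-manifold is not virtually a mapping torus of $T^3$; you instead observe that $\Z^3\rtimes\Z$ is metabelian while $\pi_1$ of a $Sol_1^4$-manifold has derived length exactly $3$, and that derived length cannot increase under surjections. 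These are two packagings of the same underlying obstruction, but yours is more self-contained, since the lower bound on the derived length is proved directly from the hyperbolicity of the monodromy on $G/Z(G)\cong\Z^2$ (so that $\phi-I$ is rationally invertible and $[\pi_1(N),\pi_1(N)]$ has finite-index image in $\Z^2$, whence a non-trivial second derived subgroup inside $Z(G)$), and this computation visibly persists in finite-index subgroups, which is exactly the point where the paper defers to an external structural statement. Two cosmetic remarks: the fact that a finite cover of a geometric manifold carries the same geometry is immediate from lifting the geometric structure and does not need Theorem~\ref{t:Wall4Dgeometries} (that theorem is the converse, homotopy-theoretic uniqueness); and in the derived-length step you should say explicitly that the non-vanishing of $[h_1,h_2]$ for $h_1,h_2$ projecting to independent vectors of $\Z^2$ uses that $G$ is a non-abelian lattice in $Nil^3$, i.e.\ that the commutator pairing $\Z^2\times\Z^2\to Z(G)$ is non-degenerate. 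Neither affects the validity of the argument.
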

\begin{proof}
  Let $Z$ be a closed oriented $Sol_1^4$-manifold. By Theorem \ref{t:mappingtorisolvable} (2), $Z$ is a mapping torus of a self-homeomorphism of
a closed $Nil^3$-manifold $N$. However, $Z$ is not a mapping torus of a self-homeomorphism of $T^3$; cf.~\cite[Section 8.6]{Hillman}.

Suppose that there is a non-zero degree map $f \colon W \longrightarrow Z$, where $W$ is a closed oriented $Sol_{m \neq n}^4$- or $Sol_0^4$-manifold.
By Theorem \ref{t:mappingtorisolvable} (1), $W$ is a mapping torus of a
self-homeomorphism of $T^3$ and $\pi_1(W) = \Z^3 \rtimes_{\theta_W} \Z = \langle x_1,x_2,x_3 \rangle \rtimes_{\theta_W} \langle t  \rangle$, where
$\theta_W$ is the automorphism of $\Z^3$ induced by the action by conjugation by $t$. Now $f_*(\pi_1(W))$ has finite index in $\pi_1(Z)$ and $\langle
f_*(t) \rangle$ acts
by conjugation (by $f_*(t)$) on $\langle f_*(x_1),f_*(x_2),f_*(x_3) \rangle$, that is $f_*(\pi_1(W))$ is a semi-direct product
$\langle f_*(x_1),f_*(x_2),f_*(x_3) \rangle \rtimes \langle f_*(t) \rangle$ (recall also that our groups are torsion-free).
However, the generators
$f_*(x_1),f_*(x_2),f_*(x_3)$ commute with each other, contradicting the fact that $\pi_1(Z)$ cannot be (virtually) $\Z^3 \rtimes \Z$. Therefore $W \ngeq
Z$.

For the converse, we have that a closed $Sol_1^4$-manifold $Z$ is finitely covered by a non-trivial circle bundle
over a closed oriented $Sol^3$-manifold and the center of $\pi_1(Z)$ is infinite cyclic (Prop. \ref{p:sol1}). Moreover, the fundamental group of every closed $Sol_{m \neq n}^4$- or $Sol_0^4$-manifold $W$ is not
presentable by products (Prop. \ref{p:sol&sol}). In particular, every finite index subgroup of $\pi_1(W)$ has trivial
center, because $W$ is aspherical. Using the asphericity of our geometries and applying a standard factorization argument we derive that $Z \ngeq W$,
because a dominant map $Z \longrightarrow W$ would factor through the base ($Sol^3$-manifold) of the domain.
\end{proof}

Finally, it has remained to show that there are no dominant maps between closed $Nil^4$-manifolds and closed manifolds possessing one of the geometries
$Sol_{m \neq n}^4$ or $Sol_0^4$.

\begin{prop}
 Closed oriented $Nil^4$-manifolds are not comparable under the domination relation with closed oriented manifolds carrying one of the geometries
$Sol_{m \neq n}^4$ or $Sol_0^4$.
\end{prop}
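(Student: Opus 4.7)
The statement has two non-comparability directions, and I would treat them by distinguishing algebraic invariants attached to the two families of fundamental groups.

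\emph{Direction 1: no $Nil^4$-manifold dominates a $Sol^4_0$- or $Sol^4_{m\neq n}$-manifold.} I would follow the template of the preceding proposition on $Sol_1^4$. By Proposition~\ref{p:nil4} the center of the fundamental group of a closed $Nil^4$-manifold $Z$ is infinite cyclic and stays so in every finite cover, while by Proposition~\ref{p:sol&sol} together with asphericity, every finite-index subgroup of the fundamental group of a closed $Sol^4_0$- or $Sol^4_{m\neq n}$-manifold $W$ has trivial center. Given a hypothetical dominant map $f \colon Z \longrightarrow W$, I pass to finite covers using Theorem~\ref{t:Wall4Dgeometries} so that $f$ becomes $\pi_1$-surjective, both $Z$ and $W$ retain their given geometries, and $Z$ is a genuine circle bundle over a $Nil^3$-manifold $B$. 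Then $f_*$ sends the infinite cyclic center of $\pi_1(Z)$ into the trivial center of $\pi_1(W)$, hence factors through $\pi_1(Z)/C(\pi_1(Z)) = \pi_1(B)$. By asphericity of $W$, the map $f$ itself factors up to homotopy through the three-dimensional $B$, forcing $\deg f = 0$.

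\emph{Direction 2: no $Sol^4_0$- or $Sol^4_{m\neq n}$-manifold dominates a $Nil^4$-manifold.} Here the invariant is the first rational Betti number. By Theorem~\ref{t:mappingtorisolvable}(1), a finite cover of such a $W$ has fundamental group $\mathbb{Z}^3 \rtimes_\theta \mathbb{Z}$ with $\theta$ having no root-of-unity eigenvalue; thus $\theta - 1$ is invertible over $\mathbb{Q}$, and the abelianization is $\mathbb{Z}^3/(\theta-1)\mathbb{Z}^3 \oplus \mathbb{Z}$ of rational rank $1$. The same computation, applied to any sublattice $L \subset \mathbb{Z}^3$ and any power $\theta^k$ (whose eigenvalues are again not roots of unity), shows that $b_1$ remains equal to $1$ for every finite cover of $W$. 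In contrast, every lattice $\Gamma$ in the simply connected nilpotent Lie group $Nil^4$ satisfies $\Gamma^{ab} \otimes \mathbb{Q} \cong \mathfrak{n}^4/[\mathfrak{n}^4,\mathfrak{n}^4]$, which has dimension $2$; hence a finite cover $Z' \longrightarrow Z$ with $\pi_1(Z')$ a lattice in $Nil^4$ satisfies $b_1(Z') = 2$. Given a dominant $f \colon W \longrightarrow Z$, pulling back $Z' \longrightarrow Z$ along $f$ yields a finite cover $\widetilde{W} \longrightarrow W$ with a dominant map $\widetilde{f} \colon \widetilde{W} \longrightarrow Z'$; non-zero degree forces $\widetilde{f}^*$ to be injective on rational cohomology, so $b_1(\widetilde{W}) \geq b_1(Z') = 2$, contradicting $b_1(\widetilde{W}) = 1$.

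\emph{Expected obstacle.} The delicate technical point is the stability $b_1 \equiv 1$ across \emph{all} finite covers of $W$ in Direction~2. This rests on the observation that any finite-index subgroup of $\mathbb{Z}^3 \rtimes_\theta \mathbb{Z}$ is, up to conjugation, of the form $L \rtimes_{\theta^k|_L} \mathbb{Z}$ for some sublattice $L \subset \mathbb{Z}^3$ and some $k \geq 1$, and that the decisive no-root-of-unity-eigenvalue property of $\theta$ passes to $\theta^k|_L$. Once this structural fact is in place, both directions fit the pattern already established earlier in the section: Direction~1 is the center/factorization argument, Direction~2 an elementary cohomological obstruction that complements the previous center-based arguments.
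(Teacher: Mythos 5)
Your proposal is correct and follows essentially the same route as the paper: the paper rules out $Sol^4_0$- and $Sol^4_{m\neq n}$-domains by the same first-Betti-number comparison (first Betti number one for all finite covers of the domain versus virtual first Betti number two for $Nil^4$-manifolds, citing presentations of the fundamental groups for the facts you verify directly), and rules out $Nil^4$-domains by the same centre/factorization-through-the-$Nil^3$-base argument via Propositions \ref{p:nil4} and \ref{p:sol&sol}. The structural details you supply (stability of $b_1=1$ under passage to finite-index subgroups of $\Z^3\rtimes_\theta\Z$, and the computation of the virtual first Betti number of $Nil^4$-manifolds) are exactly what the paper delegates to the cited reference.
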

\begin{proof}
 A closed oriented $Nil^4$-manifold cannot be dominated by closed
$Sol_{m \neq n}^4$- or $Sol_0^4$-manifolds, because the latter have first Betti number one, whereas closed $Nil^4$-manifolds have virtual first
Betti number two; see \cite[Section 6.3.3]{NeoIIPP} for presentations of the fundamental groups of the above manifolds.

Conversely, let $M$ be a closed $Nil^4$-manifold. By Prop. \ref{p:nil4}, $M$ is virtually a non-trivial circle bundle over a closed oriented
$Nil^3$-manifold and $C(\pi_1(M))$ remains infinite cyclic in finite covers. As before, since the fundamental group of every closed $Sol_{m \neq n}^4$- or $Sol_0^4$-manifold $N$
is not
presentable by products (Prop. \ref{p:sol&sol}), and in particular has trivial center, we deduce that every
($\pi_1$-surjective) map $M \stackrel{f}\longrightarrow N$ factors through the base manifold of the domain (which is a $Nil^3$-manifold). This implies that $\deg(f) = 0$ and so
$M \ngeq N$.
\end{proof}

\subsection{Non-product solvable manifolds vs virtual products}

Next, we show that there are no maps of non-zero degree between a closed manifold possessing one of the geometries $Nil^4$, $Sol_0^4$, $Sol_{m \neq n}^4$
or $Sol_1^4$ and a closed manifold carrying a product geometry $\mathbb{X}^3 \times \R$ or the reducible $\mathbb{H}^2 \times \mathbb{H}^2$ geometry.

Using the property ``not (infinite-index) presentable by products", the following result is an application in~\cite{NeoIIPP}:

\begin{thm}{\normalfont(\cite[Theorem F]{NeoIIPP}).}\label{t:productssolvable}
A closed oriented aspherical geometric $4$-manifold $M$ is dominated by a non-trivial product if and only if it is finitely covered by a product. Equivalently, $M$ carries one of the product geometries $\mathbb{X}^3 \times\R$ or the reducible $\mathbb{H}^2 \times \mathbb{H}^2$ geometry.
\end{thm}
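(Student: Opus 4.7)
The easy direction is immediate: if $M$ is finitely covered by a non-trivial product $N\times K$, then the covering projection $N\times K\longrightarrow M$ is a map of non-zero degree, so $M$ is dominated by a non-trivial product. Also, by definition the product geometries $\mathbb{X}^3\times\R$ and the reducible $\mathbb{H}^2\times\mathbb{H}^2$ geometry are precisely those whose manifolds are virtually direct products. So the content is the converse together with the identification of the relevant geometries.

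For the hard direction, the plan is to invoke Wall's homotopical uniqueness of the $4$-dimensional geometries (Theorem \ref{t:Wall4Dgeometries}) and argue geometry-by-geometry: it suffices to show that no closed aspherical $4$-manifold carrying one of the non-product geometries
\[
Nil^4,\ Sol^4_0,\ Sol^4_{m\neq n},\ Sol^4_1,\ (\mathbb{H}^2\times\mathbb{H}^2)_{irreducible},\ \mathbb{H}^4,\ \mathbb{H}^2(\mathbb{C})
\]
can be dominated by a non-trivial product. The main technical tool is the following standard factorization principle, which I would isolate as a lemma: if $M$ is aspherical and $\pi_1(M)$ is not presentable by products (npp), then $M$ is not dominated by a non-trivial product. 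The reason is that any map $f\colon X\times Y\longrightarrow M$ of non-zero degree has $\pi_1$-image of finite index (otherwise $f$ lifts to an infinite covering of $M$, which is non-compact, forcing $\deg(f)=0$); then npp implies that one of $\pi_1(f)(\pi_1(X)),\pi_1(f)(\pi_1(Y))$ is finite, so after passing to a finite cover of $X\times Y$ the map $f$ kills one of the two factors. By asphericity of $M$ the induced map factors up to homotopy through the other factor, which has strictly smaller dimension, so $\deg(f)=0$.

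It therefore remains to establish npp for the fundamental groups of each of the listed non-product geometries. For $Sol^4_0$ and $Sol^4_{m\neq n}$ this is exactly Proposition \ref{p:sol&sol}. For the real and complex hyperbolic geometries and for the irreducible $\mathbb{H}^2\times\mathbb{H}^2$ geometry, this is a consequence of the fact that irreducible lattices in semisimple Lie groups of non-compact type (with no compact factors) are npp, using that centralisers of infinite subgroups are Zariski-closed and applying Borel density together with irreducibility/simplicity of the ambient Lie group. The truly delicate cases are $Nil^4$ and $Sol^4_1$, because these groups have infinite cyclic center and so naturally come equipped with a homomorphism $\Z\times \Gamma'\longrightarrow\Gamma$ on a subgroup of finite index, which one must rule out as a \emph{presentation by products}. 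Here one exploits Propositions \ref{p:nil4} and \ref{p:sol1}: the center is infinite cyclic and \emph{stays} infinite cyclic in every finite cover, so if $\varphi\colon\Gamma_1\times\Gamma_2\longrightarrow\Gamma$ has finite-index image with both $\varphi(\Gamma_i)$ infinite, then $\varphi(\Gamma_1)$ and $\varphi(\Gamma_2)$ are two commuting infinite subgroups whose product has finite index; each is therefore contained in the centraliser of the other, and the Lie-algebraic structure of $Nil^4$ (nilpotency class three, with one-dimensional center) respectively $Sol^4_1$ (whose associated Lie group has no decomposition as a direct product of positive-dimensional Lie subgroups) forces one of the commuting factors to lie inside the center, hence to be virtually $\Z$; a dimension/rank count then shows that the other factor would have to be a finite-index subgroup itself, contradicting that $M$ is not virtually a product (which follows from Wall's list, since a virtual product would put $M$ in a different geometry).

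The step I expect to be the principal obstacle is precisely this last Lie-algebraic analysis for $Nil^4$ and $Sol^4_1$: one cannot appeal to center-triviality as in the hyperbolic or $Sol^4_0,Sol^4_{m\neq n}$ cases, and one must instead combine the rigidity of the center (Propositions \ref{p:nil4} and \ref{p:sol1}) with the non-product nature of the model Lie groups. Once npp is established in all seven cases, the factorization lemma above concludes the proof of the non-existence part, and combining this with the observation that $\mathbb{X}^3\times\R$ and reducible $\mathbb{H}^2\times\mathbb{H}^2$ manifolds are virtually products yields the biconditional.
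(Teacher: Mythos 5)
First, note that the paper does not prove this statement at all: it is quoted verbatim as Theorem F of \cite{NeoIIPP}, so there is no in-paper argument to compare against. Judged on its own merits, your proposal contains a genuine gap at exactly the point you flag as the ``principal obstacle''. Your strategy is to establish that the fundamental group of every closed manifold with a non-product aspherical geometry is \emph{not presentable by products} (npp) and then apply the Kotschick--L\"oh criterion. But this is impossible for the geometries $Nil^4$ and $Sol^4_1$: by Propositions \ref{p:nil4} and \ref{p:sol1} these fundamental groups $\Gamma$ have infinite cyclic center $C(\Gamma)$, and the multiplication map $C(\Gamma)\times\Gamma\longrightarrow\Gamma$, $(c,g)\mapsto cg$, is a surjective homomorphism in which both factors have infinite image. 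That \emph{is} a presentation by products in the sense of the definition; it cannot be ``ruled out''. Indeed, the conclusion of your own Lie-algebraic analysis --- one commuting factor virtually central $\cong\Z$, the other of finite index --- is not a contradiction with anything: the npp definition requires one image to be \emph{finite}, not of finite index, so you have merely rediscovered the central presentation rather than excluded it. Consequently the factorization lemma you isolate simply does not apply to $Nil^4$- and $Sol^4_1$-manifolds, and your argument proves nothing for those two geometries.

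The correct route, which the paper itself signals in the sentence introducing Theorem \ref{t:productssolvable} (``Using the property `not (infinite-index) presentable by products' ''), is the refinement developed in \cite{NeoIIPP}: one shows that $\pi_1(M)$ is not \emph{infinite-index} presentable by products (no presentation with \emph{both} images of infinite index), and then treats separately the remaining case where one factor has finite-index image. In that case the other factor has virtually central, hence virtually infinite cyclic, image; the Kotschick--L\"oh degree argument (non-vanishing of the relevant homology class of $B\Gamma_2$) then forces that factor to be a circle, so a hypothetical domination would come from some $X\times S^1$ with the $S^1$ mapping into the central fiber of the circle bundle $M\to B$ of Propositions \ref{p:nil4}/\ref{p:sol1}. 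This last configuration is excluded by a factorization-through-the-base argument in the spirit of Lemma \ref{l:factorizationbundles}, using the non-triviality of the circle bundle (vanishing of $H^2$ of the bundle projection). Your treatment of the remaining geometries ($Sol^4_0$, $Sol^4_{m\neq n}$ via Proposition \ref{p:sol&sol}, and the irreducible symmetric geometries via Borel density for irreducible lattices) and of the easy direction is fine.
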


Thus closed $4$-manifolds possessing a non-product solvable geometry are not dominated by products. We
therefore only need to show the converse, namely, that solvable manifolds do not dominate manifolds modeled on one of the geometries $\mathbb{X}^3 \times \R$ or the reducible $\mathbb{H}^2
\times \mathbb{H}^2$ geometry. 
First, we deal with nilpotent domains.

\begin{prop}
 A closed oriented $Nil^4$-manifold does not dominate any closed manifold possessing a geometry $\mathbb{X}^3 \times \R$ or the reducible $\mathbb{H}^2
\times \mathbb{H}^2$ geometry.
\end{prop}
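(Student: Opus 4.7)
The plan is to exploit the fact that the fundamental group of a closed $Nil^4$-manifold is virtually nilpotent, which sharply restricts the possible aspherical targets, and then to eliminate the remaining cases via a virtual first Betti number comparison.

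I begin by assuming for a contradiction that $f \colon M \longrightarrow Z$ is a map of non-zero degree, where $M$ is a closed oriented $Nil^4$-manifold and $Z$ is a closed $4$-manifold carrying a geometry of type $\mathbb{X}^3 \times \R$ or the reducible $\mathbb{H}^2 \times \mathbb{H}^2$ geometry. Replacing $Z$ by the finite cover corresponding to $\im(\pi_1(f))$ --- which is again closed aspherical with the same geometry and the same dimension, so the lifted map retains non-zero degree --- I may assume that $\pi_1(f)$ is surjective. Then $\pi_1(Z)$ is a quotient of $\pi_1(M)$. Since $\pi_1(M)$ is virtually a lattice in the nilpotent Lie group $Nil^4$, it is virtually nilpotent, and quotients of virtually nilpotent groups are virtually nilpotent, so $\pi_1(Z)$ inherits this property.

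Next I check which of the target geometries are compatible with $\pi_1(Z)$ being virtually nilpotent. A closed manifold carrying one of the geometries $\mathbb{H}^3 \times \R$, $\widetilde{SL_2} \times \R$, $\mathbb{H}^2 \times \R^2$, or the reducible $\mathbb{H}^2 \times \mathbb{H}^2$ geometry has $\pi_1$ containing a non-abelian free subgroup coming from the hyperbolic factor, while the fundamental group of a closed $Sol^3 \times \R$-manifold has exponential growth, inherited from $\pi_1(Sol^3)$. By Gromov's polynomial growth theorem none of these groups is virtually nilpotent, so $Z$ must carry either the $Nil^3 \times \R$ geometry or the $\R^4$ geometry.

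Finally, I eliminate these two remaining possibilities by comparing virtual first Betti numbers. As recalled in the preceding proof, closed $Nil^4$-manifolds have virtual $b_1$ equal to two, whereas closed $Nil^3 \times \R$- and $\R^4$-manifolds have virtual $b_1$ equal to three and four respectively. Since any non-zero degree map lifts to matched finite covers and induces a surjection in rational cohomology, the virtual first Betti number is monotone under the domination relation, yielding the desired contradiction. The main obstacle is the initial reduction step via nilpotency of $\pi_1(M)$; once the target is pinned to the two virtually nilpotent geometries, the Betti number comparison is entirely elementary.
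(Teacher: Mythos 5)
Your proof is correct, but it takes a genuinely different route from the paper's for most of the cases. The paper disposes of every target geometry except $Sol^3\times\R$ by the single observation that a closed $Nil^4$-manifold has virtual first Betti number at most two while those targets have virtual first Betti number at least three (for $\mathbb{H}^3\times\R$ this invokes Agol's virtual Haken theorem, though the paper notes it can be avoided); the stubborn case $Sol^3\times\R$, whose virtual $b_1$ is too small for this to work, is then handled by a factorization argument: the circle fiber of the $Nil^4$-manifold must die in $\pi_1$ of the $Sol^3$-factor (trivial center), so the map factors through the $Nil^3$-base, contradicting non-triviality of the Euler class on $H^2$. You invert this division of labor: virtual nilpotency of $\pi_1(M)$, preserved under passing to the finite-index image and under quotients, eliminates at one stroke every target whose group contains a non-abelian free subgroup or has exponential growth --- which covers $\mathbb{H}^3\times\R$, $\widetilde{SL_2}\times\R$, $\mathbb{H}^2\times\R^2$, reducible $\mathbb{H}^2\times\mathbb{H}^2$ \emph{and} $Sol^3\times\R$ --- leaving only the two virtually nilpotent geometries $Nil^3\times\R$ and $\R^4$, which fall to the same Betti number count the paper uses. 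This buys you a uniform treatment of the hardest case ($Sol^3\times\R$) without the circle-bundle factorization, and it sidesteps Agol entirely. Two small remarks: what you actually need is the elementary direction (Wolf/Bass: virtually nilpotent groups have polynomial growth, and their subgroups are virtually nilpotent), not Gromov's converse, so the attribution is slightly off though the logic is sound; and the monotonicity of virtual $b_1$ under domination should be justified by the standard pullback of finite covers along $\pi_1(f)$, which you correctly gesture at.
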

\begin{proof}
 Let $W$ be a closed oriented $Nil^4$-manifold. The Abelianization of $\pi_1(W)$ shows that $W$ has virtual first Betti number at most two (cf. Prop. \ref{p:nil4}), and therefore it cannot dominate any closed manifold carrying one of the geometries $\R^4$, $\mathbb{H}^2 \times \R^2$, the reducible $\mathbb{H}^2 \times
\mathbb{H}^2$ geometry, $\widetilde{SL_2} \times \R$, $Nil^3 \times \R$ or $\mathbb{H}^3 \times \R$.  
(The proof for
the $\mathbb{H}^3 \times \R$ geometry follows by the establishment of the virtual Haken conjecture by Agol. 
Nevertheless, we do not actually need to appeal to this deep result because the argument below for the $Sol^3 \times \R$ geometry applies as well for targets possessing the
geometry $\mathbb{H}^3 \times \R$.)

Next, we show that $W$ does not dominate closed $Sol^3 \times \R$-manifolds. Suppose, for contrast, that there is a non-zero degree map $f \colon  W \longrightarrow Z$, where $Z$ is a closed $4$-manifold possessing the geometry $Sol^3 \times \R$. After
passing to finite coverings, if
necessary, we may assume that $f$ is $\pi_1$-surjective, $W$ is a non-trivial circle bundle over a closed oriented $Nil^3$-manifold $M$ (cf. Prop. \ref{p:nil4}) and $Z = N \times S^1$, where $N$ is a closed oriented $Sol^3$-manifold (cf. Theorem
\ref{t:hillmanproducts}). If $p_1
\colon Z \longrightarrow N$ denotes the projection to $N$, then $\pi_1(p_1 \circ f) \colon \pi_1(W) \longrightarrow \pi_1(N)$ kills the $S^1$-fiber of
$W$, because the fundamental group of $N$ has trivial center~\cite{Scott:3-mfds}. Since our spaces are aspherical, we deduce that $p_1 \circ f$ factors through the bundle map $W
\stackrel{p}\longrightarrow M$. However, $H^2(p;\Q)$ is the zero homomorphism because $W \stackrel{p}\longrightarrow M$ is a non-trivial circle bundle. This contradicts the fact that $H^2(p_1 \circ f;\Q)$ is not trivial,
and therefore $W\ngeq Z$. 
(Alternatively, the result follows by Lemma \ref{l:factorizationbundles}, because $M \ngeq N$ according to Theorem \ref{t:order3-mfds}.) 
\end{proof}

Finally, the proof of the following statement is straightforward, because the first Betti number of every closed $Sol_0^4$-,
$Sol_{m \neq n}^4$- or $Sol_1^4$-manifold is one (by the corresponding presentation of their fundamental group; cf. \cite[Sections 8.6 and 8.7]{Hillman} and \cite[Section 6]{NeoIIPP}), and therefore the second Betti number of those manifolds is zero (recall that the
Euler characteristic of those manifolds is zero because they are virtual mapping tori).

\begin{prop}
 A closed oriented manifold possessing one of the geometries $Sol_0^4$, $Sol_{m \neq n}^4$ or $Sol_1^4$ cannot dominate a closed manifold carrying a
geometry $\mathbb{X}^3 \times \R$ or the reducible $\mathbb{H}^2 \times \mathbb{H}^2$ geometry.
\end{prop}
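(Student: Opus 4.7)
The strategy is to exploit that the domain $M$ has $b_1(M)=1$ and, consequently, $b_2(M)=0$, with both equalities stable under finite covers, while every target in the proposition has a finite cover of product form with $b_2>0$ --- with the sole caveat of the $\mathbb{H}^3\times\R$ case, handled by a separate fundamental-group argument.

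First I would compute $b_1(M)=1$. For $M$ of type $Sol_0^4$ or $Sol_{m\neq n}^4$, the fundamental group is $\Z^3\rtimes_\phi\Z$ with monodromy $\phi$ having eigenvalues $e^a,e^b,e^c$ (where $a+b+c=0$, not all zero); for $Sol_1^4$, $\pi_1(M)\cong Nil^3(\Z)\rtimes_\phi\Z$ with hyperbolic induced action on the rank-$2$ abelianization of $Nil^3(\Z)$. In either case no power of $\phi$ has $1$ as an eigenvalue, so $\phi^k-\id$ is invertible over $\Q$ for every $k\geq 1$; this forces the rational abelianization to have rank $1$, both for $\pi_1(M)$ and for every finite-index subgroup. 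Combined with $\chi(M)=0$ (virtual mapping torus, Theorem \ref{t:mappingtorisolvable}) and the formula $\chi=2-2b_1+b_2$ for oriented $4$-manifolds, this gives $b_2(M)=0$, and the same equality in every finite cover.

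Next, assume for contradiction a non-zero degree map $f\colon M\longrightarrow N$, where $N$ carries a geometry $\mathbb{X}^3\times\R$ with $\mathbb{X}^3\neq\mathbb{H}^3$, or the reducible $\mathbb{H}^2\times\mathbb{H}^2$ geometry. By Theorem \ref{t:hillmanproducts} (respectively the definition of reducibility), after passing to a further cover, $N$ admits a finite cover of the form $N'\times S^1$ (with $N'$ a closed oriented $3$-manifold of geometry $\mathbb{X}^3$) or $\Sigma_g\times\Sigma_h$ (with $g,h\geq 2$); moreover we may arrange $b_1(N')\geq 1$ for each of $\R^3, Nil^3, Sol^3, \mathbb{H}^2\times\R, \widetilde{SL_2}$ by taking standard model covers. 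Pulling back along $f$ yields a finite cover $\tilde M\to M$ and a non-zero degree map $\tilde M\to \tilde N$, so $b_2(\tilde M)\geq b_2(\tilde N)$. But $b_2(\tilde M)=0$ by the first step, while Poincaré duality on the $3$-manifold $N'$ gives $b_2(N'\times S^1)=2b_1(N')\geq 2$, and $b_2(\Sigma_g\times\Sigma_h)=4gh+2$; contradiction.

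Finally I would handle the case $\mathbb{X}^3=\mathbb{H}^3$ separately, since a closed hyperbolic $3$-manifold may have $b_1=0$ (leading to $b_2=0$ in the product cover, so the Betti-number argument would otherwise need Agol's virtual Haken theorem). Instead, $\pi_1(M)$ is virtually polycyclic from its semi-direct product presentation, so the finite-index image $f_*(\pi_1(M))\subseteq\pi_1(N)$, and hence $\pi_1(N)$ itself, is virtually polycyclic. But $\pi_1(N)$ virtually contains $\pi_1$ of a closed hyperbolic $3$-manifold, which contains non-abelian free subgroups and is thus not virtually polycyclic; this contradiction completes the proof. The main obstacle is precisely this $\mathbb{H}^3\times\R$ target: pure Betti-number considerations fail when $b_1$ of the hyperbolic factor vanishes, but the polycyclicity dichotomy between $\pi_1(M)$ and $\pi_1$ of anything virtually containing a hyperbolic $3$-manifold group bypasses this without recourse to Agol's theorem.
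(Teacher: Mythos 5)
Your proof is correct, and its core --- $b_1=1$ hence $b_2=0$ for the domain and all its finite covers, versus $b_2>0$ for a suitable product cover of the target --- is exactly the argument the paper gives (the paper states it in one sentence, leaving the passage to covers and the computation $b_2(N'\times S^1)=2b_1(N')$, $b_2(\Sigma_g\times\Sigma_h)=4gh+2$ implicit). Where you genuinely add something is the $\mathbb{H}^3\times\R$ target: you are right that a closed hyperbolic $3$-manifold $N'$ may be a rational homology sphere, in which case $N'\times S^1$ has $b_1=1$ and $b_2=0$ and the Betti-number comparison says nothing. The paper's one-line proof does not address this; it would need either Agol's virtually positive $b_1$ theorem or a separate argument (note that in the analogous proposition for $Nil^4$ domains the author explicitly flags this issue and resolves it by a factorization-through-the-base argument rather than by Agol). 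Your resolution --- $f_*(\pi_1(M))$ is a finite-index, virtually polycyclic subgroup of $\pi_1(N)$, which is impossible since $\pi_1(N)$ virtually contains a hyperbolic $3$-manifold group and hence a non-abelian free group --- is clean, elementary, and in fact disposes of every target whose fundamental group is not virtually solvable (reducible $\mathbb{H}^2\times\mathbb{H}^2$, $\mathbb{H}^2\times\R^2$, $\widetilde{SL_2}\times\R$ as well), so only the solvable targets $\R^4$, $Nil^3\times\R$, $Sol^3\times\R$ actually need the Betti-number step. In short: same main approach as the paper, with a necessary extra case handled more carefully than in the published text.
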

 
\subsection{The irreducible $\mathbb{H}^2 \times \mathbb{H}^2$ geometry}\label{ss:irreducibleH2xH2}

We finally deal with irreducible closed $\mathbb{H}^2 \times \mathbb{H}^2$-manifolds. We show that they cannot be compared under $\geq$ with any other
closed manifold possessing a non-hyperbolic aspherical geometry.

Let $M$ be a closed oriented irreducible $\mathbb{H}^2 \times \mathbb{H}^2$-manifold. Suppose that $f \colon M \longrightarrow N$ is a map of non-zero
degree, where $N$ is a closed aspherical manifold not possessing the irreducible $\mathbb{H}^2 \times \mathbb{H}^2$ geometry. As
usual, we can assume that $f$ is a $\pi_1$-surjective map, after possibly passing to a finite cover. Then we obtain a short exact sequence
\[
 1 \longrightarrow \ker (\pi_1(f)) \longrightarrow \pi_1(M) \stackrel{\pi_1(f)}\longrightarrow \pi_1(N) \longrightarrow 1.
\]
By a theorem of Margulis \cite[Theorem IX.6.14]{Margulis}, the kernel $\ker (\pi_1(f))$ must be trivial, meaning that $\pi_1(f)$ is an isomorphism. Since $M$ and $N$ are
aspherical, we deduce that $M$ is homotopy equivalent to $N$, which contradicts Theorem \ref{t:Wall4Dgeometries}. Therefore $M \ngeq N$.

We now show that $M$ cannot be dominated by any other (non-hyperbolic) geometric closed aspherical $4$-manifold $N$. Since $M$ is not dominated by
products, it suffices to show that $M$ cannot be dominated by a closed manifold $N$ possessing one of the geometries $Sol_1^4$, $Nil^4$, $Sol_{m \neq n}^4$
or
$Sol_0^4$. For each of those geometries, $\pi_1(N)$ has a normal
subgroup of infinite index, which is free Abelian of rank one (geometries $Sol_1^4$ and $Nil^4$) or three (geometries $Sol_{m \neq n}^4$ and $Sol_0^4$);
see \cite[Section 6]{NeoIIPP} for details.
If there were a ($\pi_1$-surjective) map of non-zero degree $f \colon N \longrightarrow M$, then by \cite[Theorem IX.6.14]{Margulis} either $f$ would factor through a
lower dimensional aspherical manifold or $\pi_1(M)$ would be free Abelian of finite rank. The latter cases cannot occur and so $N \ngeq M$. 

We have now shown the following:

\begin{prop}
 Closed irreducible $\mathbb{H}^2 \times \mathbb{H}^2$-manifolds are not comparable under $\geq $ with closed non-hyperbolic $4$-manifolds possessing a
different aspherical geometry.
\end{prop}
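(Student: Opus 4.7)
The plan is to handle the two directions of non-comparability separately, in both cases exploiting Margulis's normal subgroup theorem applied to the irreducible torsion-free lattice $\pi_1(M)$ in the rank-two semisimple Lie group $\mathrm{PSL}(2,\R)\times\mathrm{PSL}(2,\R)$. The defining feature of the \emph{irreducible} $\mathbb{H}^2\times\mathbb{H}^2$ geometry is precisely the irreducibility of this lattice, so Margulis's theorem applies and every normal subgroup of $\pi_1(M)$ is either trivial or of finite index.

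For the direction $M\ngeq N$, I would assume that $f\colon M\longrightarrow N$ has non-zero degree and pass to a finite cover to arrange $\pi_1$-surjectivity. Writing
\[
1\longrightarrow K\longrightarrow \pi_1(M)\stackrel{\pi_1(f)}\longrightarrow \pi_1(N)\longrightarrow 1
\]
and using that $\pi_1(N)$ is infinite (as $N$ is closed aspherical of dimension four), the kernel $K$ has infinite index and hence vanishes by Margulis. Thus $\pi_1(f)$ is an isomorphism, and asphericity upgrades this to a homotopy equivalence $M\simeq N$, contradicting the homotopical uniqueness of Thurston's $4$-dimensional geometries (Theorem \ref{t:Wall4Dgeometries}).

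For the reverse direction $N\ngeq M$, note first that $M$ is not dominated by products (Theorem \ref{t:productssolvable}), so the only geometries left to consider for $N$ are $Nil^4$, $Sol_1^4$, $Sol_0^4$ and $Sol^4_{m\neq n}$. In each of these, $\pi_1(N)$ contains an infinite normal abelian subgroup $A$ of infinite index: the infinite cyclic centre in the $Nil^4$ and $Sol_1^4$ cases (Propositions \ref{p:nil4} and \ref{p:sol1}), or the $\Z^3$ kernel of the mapping-torus projection in the $Sol_0^4$ and $Sol^4_{m\neq n}$ cases (Theorem \ref{t:mappingtorisolvable}). If $f\colon N\longrightarrow M$ were dominant and $\pi_1$-surjective, then $\pi_1(f)(A)$ would be an abelian normal subgroup of $\pi_1(M)$; Margulis forces it to be either of finite index --- impossible since $\pi_1(M)$ is not virtually abelian --- or trivial, in which case $f$ factors through the aspherical quotient of $N$ by $A$, a space of strictly smaller dimension, giving $\deg(f)=0$.

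The main obstacle, and really the sole conceptual ingredient, is verifying that Margulis's theorem genuinely applies, i.e.\ that the irreducible $\mathbb{H}^2\times\mathbb{H}^2$ geometry corresponds to an irreducible lattice in $\mathrm{PSL}(2,\R)\times\mathrm{PSL}(2,\R)$ so that the strong normal-subgroup dichotomy is available. Once this is in place, the rest is standard bookkeeping: moving between domination and $\pi_1$-surjection via finite covers, translating group isomorphisms into topological homotopy equivalences via asphericity, and factoring maps through the aspherical quotients of lower dimension that kill the relevant normal abelian subgroup.
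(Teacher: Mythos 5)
Your proof is correct and follows essentially the same route as the paper: Margulis's normal subgroup theorem kills the kernel in the direction $M\geq N$ (yielding a homotopy equivalence that contradicts the uniqueness of the geometries, Theorem \ref{t:Wall4Dgeometries}), and in the reverse direction one reduces via Theorem \ref{t:productssolvable} to the non-product solvable geometries and uses the infinite-index normal abelian subgroup of $\pi_1(N)$ together with Margulis to force a factorization through a lower-dimensional aspherical space. The only (harmless) difference is that you spell out the dichotomy for the image of $A$ slightly more explicitly than the paper does.
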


This finishes the proof of Theorem \ref{t:order4}.

\subsection{A remark about hyperbolic $4$-manifolds}\label{ss:hyper}
 Using mostly standard properties of the fundamental group or the obstruction ``fundamental group not presentable by products'', it is easy to see that
closed hyperbolic $4$-manifolds (real and complex) are not dominated by geometric non-hyperbolic ones. Gaifullin~\cite{Gaifullin} proved that there exist
real hyperbolic closed $4$-manifolds that virtually dominate all closed $4$-manifolds. Finally, complex hyperbolic $4$-manifolds cannot
dominate real hyperbolic ones; see~\cite{CarlsonToledo} and the related references given there.

\section{The domination relation and Kodaira dimensions}\label{s:Kodaira}

In this section we investigate the monotonicity of Kodaira dimensions of low-dimensional manifolds with respect to the existence of maps of non-zero degree.

The Kodaira dimension $\kappa^t$ of a closed oriented surface $\Sigma_g$ of genus $g$ is defined by
\begin{equation*}
\kappa^t(\Sigma_g)= \left\{\begin{array}{ll}
        -\infty, & \text{if } g=0\\
        0, & \text{if } g=1\\
        1, & \text{if } 9\geq2.
        \end{array}\right.
\end{equation*}
Thus $\kappa^t$ is indeed monotone with respect to the domination relation in dimension two: If $\Sigma_g\geq\Sigma_h$, then $\kappa^t(\Sigma_g)\geq\kappa^t(\Sigma_h)$.

Zhang~\cite{Zhang} defined the Kodaira dimension of $3$-manifolds as follows: Divide the eight $3$-dimensional Thurston geometries into four categories assigning a value to each category:

\begin{center}
\begin{tabular}{rl}
$-\infty:$          & $S^3$, $S^2\times\R$ \\
$0:$   & $\R^3$, $Nil^3$, $Sol^3$ \\
 $1:$          & $\mathbb{H}^2\times\R$, $\widetilde{SL_2}$\\
$\frac{3}{2}:$ & $\mathbb{H}^3$.\\         
\end{tabular}
\end{center}
Let $M$ be a closed oriented $3$-manifold. Consider first the Kneser-Milnor prime decomposition of $M$ and then a toroidal decomposition for each prime summand of $M$, such that at the end each piece carries one of the eight geometries with finite volume. We call this a $T$-decomposition of $M$. The Kodaira dimension of $M$ is then defined as follows:

\begin{defn}\label{d:Kodaira3}(\cite{Zhang}).
The Kodaira dimension $\kappa^t$ of a closed oriented $3$-manifold $M$ is
\begin{itemize}
\item[(1)] $\kappa^t(M) = -\infty$, if for any $T$-decomposition each piece belongs to the category $-\infty$;
\item[(2)] $\kappa^t(M) = 0$, if for any $T$-decomposition there is at least one piece in the category $0$, but no pieces in the category $1$ or $\frac{3}{2}$;
\item[(3)] $\kappa^t(M) = 1$, if for any $T$-decomposition there is at least one piece in the category $1$, but no pieces in the category $\frac{3}{2}$;
\item[(4)] $\kappa^t(M) = \frac{3}{2}$, if for any $T$-decomposition there is at least one hyperbolic piece.
\end{itemize}
\end{defn}

The following result of Zhang states that the Kodaira dimension of $3$-manifolds is monotone with respect to the domination relation. We give a proof for the convenience of the reader.

\begin{thm}{\normalfont (\cite[Theorem 1.1]{Zhang}).}\label{t:Zhang}
Let $M,N$ be closed oriented $3$-manifolds. If $M\geq N$, then $\kappa^t(M)\geq\kappa^t(N)$.
\end{thm}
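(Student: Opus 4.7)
My plan is to verify the contrapositive: assuming $f\colon M\to N$ has non-zero degree, I will show $\kappa^t(M)\geq\kappa^t(N)$ by case analysis on $\kappa^t(N)\in\{-\infty,0,1,\tfrac{3}{2}\}$. The case $\kappa^t(N)=-\infty$ is vacuous, so three cases remain. For each I exhibit a numerical or homological invariant that (i) detects that value of $\kappa^t(N)$, (ii) is monotone under non-zero degree maps, and (iii) vanishes on every closed $3$-manifold of strictly smaller Kodaira dimension; conditions (i)--(iii) together force $\kappa^t(M)\geq\kappa^t(N)$.

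\emph{Case $\kappa^t(N)=3/2$.} By hypothesis every $T$-decomposition of $N$ contains a hyperbolic piece, so by the Gromov--Thurston--Soma additivity formula the simplicial volume $\|N\|$ equals (up to the universal constant $v_3$) the sum of the hyperbolic volumes of those pieces and is therefore positive. Conversely every closed $3$-manifold $W$ with $\kappa^t(W)\leq 1$ has no hyperbolic piece, so by the same formula together with the vanishing of $\|\cdot\|$ on Seifert and $Sol^3$ pieces we get $\|W\|=0$. The functoriality inequality $\|M\|\geq |\deg f|\cdot\|N\|$ then forces $\|M\|>0$ and hence $\kappa^t(M)=3/2$.

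\emph{Case $\kappa^t(N)=1$.} Now $N$ has no hyperbolic piece but contains a Seifert piece $P$ with hyperbolic base $2$-orbifold. A class $\alpha\in H^1(N;\mathbb{Z})$ Poincar\'e dual to a horizontal surface in $P$ has positive Thurston norm $\|\alpha\|_T>0$. Monotonicity of the Thurston norm under non-zero degree maps (Gabai) gives $\|f^*\alpha\|_T\geq |\deg f|\cdot\|\alpha\|_T>0$. On the other hand, if $\kappa^t(M)\leq 0$ then every $T$-piece of $M$ is Seifert with Euclidean base orbifold, $Sol^3$, $S^2\times\mathbb{R}$, or spherical; for each such geometry every integral cohomology class is represented by an embedded family of tori (or spheres) built from vertical fibers, horizontal tori, and boundary tori, so $\|\cdot\|_T$ vanishes identically on $M$, a contradiction. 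Hence $\kappa^t(M)\geq 1$.

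\emph{Case $\kappa^t(N)=0$.} Here $N$ contains at least one aspherical piece in its $T$-decomposition, so the classifying map $c_N\colon N\to B\pi_1(N)$ is non-trivial in top rational homology, i.e., $N$ is rationally essential. Rational essentiality is preserved under dominant maps: up to homotopy $c_N\circ f = B\pi_1(f)\circ c_M$, and $(c_N\circ f)_*[M]=\deg f\cdot (c_N)_*[N]\neq 0$ forces $(c_M)_*[M]\neq 0$. The category-$(-\infty)$ closed orientable $3$-manifolds are precisely the connected sums of spherical space forms and copies of $S^2\times S^1$, whose fundamental groups are free products of finite groups and finitely generated free groups, and therefore satisfy $H_3(B\pi_1;\mathbb{Q})=0$; such manifolds are rationally inessential, so $\kappa^t(M)\geq 0$.

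The main technical obstacle is the case $\kappa^t(N)=1$: while Gabai's monotonicity of $\|\cdot\|_T$ is standard, the assertion that graph manifolds whose Seifert pieces all have Euclidean base orbifold have identically zero Thurston norm requires a concrete combinatorial construction of representing tori that extend compatibly across the JSJ tori. The other two cases are direct applications, respectively, of the Gromov--Thurston--Soma additivity theorem and of the naturality of classifying maps under dominations.
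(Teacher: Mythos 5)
Your cases $\kappa^t(N)=\tfrac{3}{2}$ (simplicial volume, additivity under connected sum and torus gluings, functoriality under degree) and $\kappa^t(N)=0$ (rational essentiality pulls back along dominant maps, and the category-$(-\infty)$ manifolds are exactly the rationally inessential ones) are correct, and in fact more self-contained than the paper's treatment, which reduces to prime targets by pinching and then leans on Wang's ordering (Theorem \ref{t:order3-mfds}) for geometric domains and on free-product structure of $\pi_1(M)$ for non-prime domains. However, the case $\kappa^t(N)=1$ has a genuine gap: the Thurston norm simply does not detect $\kappa^t=1$. A closed $\widetilde{SL_2}$-manifold can be a rational homology sphere (e.g.\ the Brieskorn sphere $\Sigma(2,3,7)$), in which case $H^1(N;\Z)=0$ and there is no class $\alpha$ to work with; and even when $b_1(N)>0$, a non-trivial circle bundle over a hyperbolic surface has all of $H_2(N;\Z)$ carried by vertical tori, so $\|\cdot\|_T\equiv 0$ while $\kappa^t(N)=1$. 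Your proposed class ``Poincar\'e dual to a horizontal surface'' need not exist either: a closed Seifert piece with non-zero Euler number admits no horizontal surface, and for a bounded JSJ piece a horizontal surface has boundary on the JSJ tori and does not directly produce a class in $H^1(N;\Z)$ of positive norm. So condition (i) of your own scheme fails for this invariant.

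The argument that actually works here (and is the one the paper uses, following Wang and Kotschick--Neofytidis) is group-theoretic rather than norm-theoretic: after passing to finite covers, a target in category $(3)$ is a circle bundle over a hyperbolic surface (or orbifold) $\Sigma$, and a dominant map $M\longrightarrow N$ yields a surjection of $\pi_1(M)$ onto a finite-index subgroup of $\pi_1(N)$, hence, killing the central fiber class, onto a hyperbolic surface group. If $\kappa^t(M)\leq 0$ then $\pi_1(M)$ is a free product of finite and virtually solvable groups, which admits no such surjection. You would need to substitute an argument of this type (or an invariant such as the Seifert volume, which is positive on $\widetilde{SL_2}$-manifolds, combined with simplicial volume for the $\mathbb{H}^2\times\R$ pieces) for your Thurston-norm step; as written, that case does not go through.
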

\begin{proof}
We will show that, if $\kappa^t(M)<\kappa^t(N)$, then $M\ngeq N$. Recall that if $M\geq N_1\# N_2$, then $M\geq N_i$ because $N_1\# N_2\geq N_i$ by pinching the summand $N_j\neq N_i$ to a point. (Note that the pinch map $N_1\# N_2\longrightarrow N_i$ is $\pi_1$-surjective.) So we can assume that the target $N$ is prime and belongs to the  category (among (1)--(4)) with the highest Kodaira dimension, according to which category our initial target (which is a connected sum containing $N$) belongs.

First, if $M$ possesses a Thurston geometry, then the claim is an immediate consequence of Theorem \ref{t:order3-mfds}. 

Next, suppose $M$ is not prime. If $M$ belongs to category (1), then clearly it does not dominate any prime manifold in categories (2)--(4) because those manifolds are aspherical (see also Theorem \ref{t:order3-mfds}). If $M$ belongs to category (3), then it does not dominate any prime manifold in category (4), because manifolds with hyperbolic pieces have positive simplicial volume~\cite{Gromov}, whereas every manifold in category (3) has zero simplicial volume (recall that the simplicial volume is additive for connected sums of manifolds of dimension at least three; cf~\cite{Gromov}).

Finally, suppose $M$ belongs to category (2). By the same argument as above (using simplicial volume), $M$ does not dominate any manifold in category (4). Let now the target $N$ be in category (3). We can assume that $N$ (or each toroidal piece of $N$) is a circle bundle over a hyperbolic surface (or orbifold) $\Sigma$. Suppose $M$ dominates $N$. Then, after possibly passing to finite covers, there is a surjection $\pi_1(M)\twoheadrightarrow \pi_1(\Sigma)$. However this is impossible because $\pi_1(M)$ is a free product of (virtually) solvable groups.
\end{proof}

In dimension four, Zhang defined the Kodaira dimension $\kappa^g$ for geometric manifolds:
\begin{defn}\label{d:Kodaira4}(\cite{Zhang}).
The Kodaira dimension $\kappa^g$ of a closed oriented geometric $4$-manifold $M$ is
\begin{itemize}
\item[(1)] $\kappa^g(M)=-\infty$, if $M$ is modeled on one of the geometries $S^4$, $\CP^2$, $S^3\times\R$, $S^2\times S^2$, $S^2\times\R^2$, $S^2\times\mathbb{H}^2$, $Sol_0^4$ or $Sol_1^4$;
\item[(2)] $\kappa^g(M)=0$, if $M$ is modeled on one of the geometries $\R^4$, $Nil^4$, $Nil^3\times\R$, $Sol_{m\neq n}^4$ or $Sol^3\times\R$;
\item[(3)] $\kappa^g(M)=1$, if $M$ is modeled on one of the geometries $\mathbb{H}^2\times\mathbb\R^2$, $\widetilde{SL_2}\times\R$ or $\mathbb{H}^3\times\R$; 
\item[(4)] $\kappa^g(M)=2$, if $M$ is modeled on one of the geometries $\mathbb{H}^2(\mathbb{C})$, $\mathbb{H}^2\times\mathbb{H}^2$ or $\mathbb{H}^4$.
\end{itemize}
\end{defn}

An application of Theorem \ref{t:order4} is that the Kodaira dimension of geometric $4$-manifolds is also monotone with respect to the domination relation, as conjectured in~\cite{Zhang}: 

\begin{thm}{\normalfont (Theorem \ref{t:Kodaira4}).}
Let $M, N$ be closed oriented geometric $4$-manifolds. If $M\geq N$, then $\kappa^g(M)\geq\kappa^g(N)$.
\end{thm}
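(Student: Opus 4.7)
The plan is to prove the contrapositive: if $\kappa^g(M) < \kappa^g(N)$, then $M \not\geq N$. The case $\kappa^g(N) = -\infty$ is vacuous, and when $\kappa^g(N) \in \{0,1,2\}$ Definition \ref{d:Kodaira4} guarantees that $N$ is aspherical. I would split the argument into three cases according to the nature of $M$: non-hyperbolic aspherical, hyperbolic, or non-aspherical. In each case the non-existence of a dominant map is already implicitly available from the material of this paper or from the references cited in Section \ref{ss:hyper}; the task is mainly to organize it.

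In the main case, both $M$ and $N$ are non-hyperbolic aspherical, so Theorem \ref{t:order4} applies. It then suffices to inspect Figure \ref{d:nonhypmaps} and observe that every arrow there is consistent with Kodaira monotonicity: the outgoing arrows from $\mathbb{H}^3\times\R$ ($\kappa^g=1$) land in geometries with $\kappa^g\in\{0,1\}$; the arrow out of $(\mathbb{H}^2\times\mathbb{H}^2)_{red}$ ($\kappa^g=2$) lands in $\mathbb{H}^2\times\R^2$ ($\kappa^g=1$); and each of $\mathbb{H}^2\times\R^2\to\R^4$ and $\widetilde{SL_2}\times\R\to Nil^3\times\R$ decreases $\kappa^g$ by one. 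The remaining non-hyperbolic aspherical geometries $(\mathbb{H}^2\times\mathbb{H}^2)_{irr}$, $Sol_0^4$, $Sol_1^4$, $Nil^4$, $Sol_{m\neq n}^4$, $Sol^3\times\R$, $Nil^3\times\R$ and $\R^4$ have no outgoing arrows, which is again consistent with Kodaira monotonicity (in each case no geometry of strictly larger $\kappa^g$ is supposed to be dominated by them). Thus whenever $\kappa^g(M)<\kappa^g(N)$ among these geometries, Figure \ref{d:nonhypmaps} contains no oriented path from $M$'s geometry to $N$'s, so $M\not\geq N$ by Theorem \ref{t:order4}.

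Hyperbolic targets ($\kappa^g(N)=2$) are handled by the material of Section \ref{ss:hyper}: real and complex hyperbolic closed $4$-manifolds are not dominated by closed geometric non-hyperbolic ones (via standard fundamental group / simplicial volume obstructions cited there), and $\mathbb{H}^2\times\mathbb{H}^2$-manifolds are excluded as targets by the analysis of Section \ref{s:products} in the reducible case (together with Theorem \ref{t:productssolvable}) and by the Margulis superrigidity argument of Section \ref{ss:irreducibleH2xH2} in the irreducible case. Within the hyperbolic geometries themselves $\kappa^g$ is constant equal to $2$, so nothing further needs to be checked in that sub-case.

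The remaining possibility is that $M$ is modeled on one of the non-aspherical $\kappa^g=-\infty$ geometries $S^4$, $\CP^2$, $S^3\times\R$, $S^2\times S^2$, $S^2\times\R^2$ or $S^2\times\mathbb{H}^2$, while $N$ is aspherical. I would dispatch this uniformly by a cohomological dimension argument: in each case $\pi_1(M)$ is virtually trivial, virtually $\Z$, virtually $\Z^2$, or virtually a closed surface group, so has virtual cohomological dimension at most $2$, whence $H^4(B\pi_1(M);\Q)=0$. Since $N$ is aspherical, any map $f\colon M\to N$ is homotopic to a composition through $B\pi_1(M)$, so $H^4(f;\Q)$ factors through $H^4(B\pi_1(M);\Q)=0$ and $\deg(f)=0$. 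The main obstacle I anticipate is bookkeeping rather than deep mathematics: each individual sub-case reduces either to Theorem \ref{t:order4}, to a previously recorded hyperbolic obstruction, or to this elementary cohomological argument, and one must check that together they cover every pair $(M,N)$ with $\kappa^g(M)<\kappa^g(N)$.
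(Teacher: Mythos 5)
Your proposal is correct and follows essentially the same route as the paper: the paper's proof likewise splits into non-aspherical domains (dismissed as clearly unable to dominate aspherical targets), non-hyperbolic aspherical geometries (handled by Theorem \ref{t:order4} and inspection of Figure \ref{d:nonhypmaps}), and hyperbolic targets (referred to Section \ref{ss:hyper}). Your explicit verification that every arrow in Figure \ref{d:nonhypmaps} is non-increasing in $\kappa^g$, and the cohomological-dimension argument for non-aspherical domains, simply spell out details the paper leaves implicit.
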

\begin{proof}
It is clear that manifolds possessing a non-aspherical geometry do not dominate aspherical manifolds. The proof for the non-hyperbolic aspherical geometries follows by Theorem \ref{t:order4}. Finally, we refer to Section \ref{ss:hyper} for the hyperbolic geometries.
\end{proof}

\bibliographystyle{amsplain}

\end{document}